%
%
\documentclass[12pt]{amsart}
\pagestyle{plain}
\usepackage{amssymb,amsthm,amsmath,amsfonts,latexsym,tikz,hyperref,cleveref,start4,mathtools,xcolor,shuffle,microtype,enumitem,breakurl}
\usepackage[hmargin=1in,vmargin=1in]{geometry}

\setlist{itemsep=3bp}

\newtheorem{thm}{Theorem}[section]

\newtheorem{cor}[thm]{Corollary}
\newtheorem{lem}[thm]{Lemma}

\newtheorem{question}[thm]{Question}
\newtheorem{problem}[thm]{Problem}

\DeclareMathOperator{\cyc}{cyc}
\DeclareMathOperator{\QSym}{QSym}
\DeclareMathOperator{\cQSym}{cQSym}
\DeclareMathOperator{\cst}{cst}
\DeclareMathOperator{\cDes}{cDes}
\DeclareMathOperator{\Pk}{Pk}
\DeclareMathOperator{\cPk}{cPk}
\DeclareMathOperator{\cdes}{cdes}
\DeclareMathOperator{\pk}{pk}
\DeclareMathOperator{\cpk}{cpk}
\DeclareMathOperator{\cmaj}{cmaj}

\DeclareMathOperator{\Comp}{Comp}
\DeclareMathOperator{\cComp}{cComp}

\DeclareMathOperator{\ocmaj}{ocmaj}
\DeclareMathOperator{\Lpk}{Lpk}
\DeclareMathOperator{\lpk}{lpk}
\DeclareMathOperator{\Val}{Val}
\DeclareMathOperator{\val}{val}
\DeclareMathOperator{\Ddes}{Ddes}
\DeclareMathOperator{\ddes}{ddes}
\DeclareMathOperator{\Rpk}{Rpk}
\DeclareMathOperator{\rpk}{rpk}
\DeclareMathOperator{\Epk}{Epk}
\DeclareMathOperator{\epk}{epk}
\DeclareMathOperator{\br}{br}
\DeclareMathOperator{\udr}{udr}
\DeclareMathOperator{\cbr}{cbr}
\DeclareMathOperator{\cVal}{cVal}
\DeclareMathOperator{\cval}{cval}
\DeclareMathOperator{\comaj}{comaj}
\DeclareMathOperator{\ccomaj}{ccomaj}
\DeclareMathOperator{\ost}{ost}

\newcommand{\shu}{\shuffle}

\begin{document}
\pagestyle{plain}

\title{Cyclic shuffle-compatibility via cyclic shuffle algebras}
\author{Jinting Liang}
\address{Department of Mathematics,  Michigan State University,
 East Lansing, MI 48824, USA}
\email{liangj26@msu.edu}
\author{Bruce E. Sagan}
\address{Department of Mathematics, Michigan State University,
 East Lansing, MI 48824, USA}
\email{sagan@math.msu.edu}
\author{Yan Zhuang}
\address{Department of Mathematics and Computer Science, Davidson College,
Davidson, NC 28035, USA}
\email{yazhuang@davidson.edu}
\thanks{YZ was partially supported by an AMS-Simons Travel Grant and NSF grant DMS-2316181.}


\date{\today}

\subjclass{05A05 (Primary), 05E05 (Secondary)}

\keywords{Permutation statistics, shuffle-compatibility, cyclic shuffles, quasisymmetric functions, cyclic descents, cyclic peaks}
 
\begin{abstract}
A permutation statistic $\st$ is said to be shuffle-compatible if the distribution of $\st$ over the set of shuffles of two disjoint permutations $\pi$ and $\sigma$ depends only on $\st\pi$, $\st\sigma$, and the lengths of $\pi$ and $\sigma$. Shuffle-compatibility is implicit in Stanley's early work on $P$-partitions, and was first explicitly studied by Gessel and Zhuang, who developed an algebraic framework for shuffle-compatibility centered around their notion of the shuffle algebra of a shuffle-compatible statistic. For a family of statistics called descent statistics, these shuffle algebras are isomorphic to quotients of the algebra of quasisymmetric functions.

Recently, Domagalski, Liang, Minnich, Sagan, Schmidt, and Sietsema defined a version of shuffle-compatibility for statistics on cyclic permutations, and studied cyclic shuffle-compatibility through purely combinatorial means. In this paper, we define the cyclic shuffle algebra of a cyclic shuffle-compatible statistic, and develop an algebraic framework for cyclic shuffle-compatibility in which the role of quasisymmetric functions is replaced by the cyclic quasisymmetric functions recently introduced by Adin, Gessel, Reiner, and Roichman. We use our theory to provide explicit descriptions for the cyclic shuffle algebras of various cyclic permutation statistics, which in turn gives algebraic proofs for their cyclic shuffle-compatibility.
\end{abstract}

\maketitle

\tableofcontents
%
%
%



\section{Introduction}

We say that $\pi = \pi_1 \pi_2 \cdots \pi_n$ is a (\textit{linear}) \textit{permutation} of \textit{length} $n$ if it is a sequence of $n$ distinct letters---not necessarily from 1 to $n$---in $\bbP$, the set of positive integers. (We refer to these as linear permutations to distinguish them from cyclic permutations, but we will often drop the descriptor ``linear'' if it is clear from context that we are referring to linear permutations.) For example, $826491$ is a permutation of length 6. Let $|\pi|$ denote the length of a permutation $\pi$, let $\mathfrak{P}_n$ denote the set of permutations of length $n$, and $\mathfrak{S}_n \subseteq \mathfrak{P}_n$ the set of permutations of $[n]\coloneqq \{1, 2, \dots, n \}$. Note that $\mathfrak{P}_0$ and $\mathfrak{S}_0$ consist only of the empty word.

Let $\pi\in\mathfrak{P}_{m}$ and $\sigma\in\mathfrak{P}_{n}$ be \textit{disjoint} permutations, that is, permutations with no letters in common. We say that $\tau\in\mathfrak{P}_{m+n}$ is a \textit{shuffle}
of $\pi$ and $\sigma$ if both $\pi$ and $\sigma$ are subsequences of $\tau$. The set of shuffles of $\pi$ and $\sigma$ is denoted $\pi \shuffle \sigma$. For example, 
$$
71 \shuffle 25 = \{ 7125, 7215, 7251, 2715, 2751, 2571 \}.
$$

Following \cite{Gessel2018}, a (\textit{linear}) \textit{permutation statistic} is a function $\st$ on permutations such that $\st \pi = \st \sigma$ whenever $\pi$ and $\sigma$ have the same relative order.\footnote{The \textit{standardization} of a permutation $\pi \in \mathfrak{P}_n$ is the permutation in $\mathfrak{S}_n$ obtained by replacing the smallest letter in $\pi$ by 1, the second smallest by 2, and so on. Then two permutations are said to \textit{have the same relative order} if they have the same standardization.} Three classical permutation statistics, dating back to MacMahon \cite{macmahon}, are the descent set $\Des$, descent number $\des$, and the major index $\maj$. We say that $i\in[n-1]$ is a \textit{descent} of $\pi\in\mathfrak{P}_n$ if $\pi_{i}>\pi_{i+1}$. The \textit{descent set} of $\pi$
\[
\Des \pi \coloneqq\{\,i\in[n-1] : \pi_{i}>\pi_{i+1}\,\}
\]
is the set of its descents, the \textit{descent number}
\[
\des \pi \coloneqq\left|\Des \pi \right|
\]
its number of descents, and the \textit{major index} 
\[
\maj \pi \coloneqq\sum_{i\in\Des \pi}i
\]
the sum of its descents.

Several other permutation statistics---somewhat less classical but still well-studied---are based on the notion of peaks. We say that $i\in \{2,3,\dots,n-1\}$ is a \textit{peak} of $\pi\in\mathfrak{P}_n$ if $\pi_{i-1} < \pi_{i} > \pi_{i+1}$. The \textit{peak set} of $\pi$
\[
\Pk \pi \coloneqq\{\,i\in \{2,3,\dots,n-1\} : \pi_{i-1} < \pi_{i} > \pi_{i+1} \,\}
\]
is the set of its peaks, and the \textit{peak number}
\[
\pk \pi \coloneqq\left|\Pk \pi \right|
\]
is its number of peaks. 
Some related statistics, such as the left peak set and left peak number, will be defined in Section \ref{ss-otherdes}.

Given a set $S$ of permutations and a permutation statistic $\st$, the \textit{distribution} of $\st$ over $S$ is the multiset 
$$ \st S \coloneqq \{\{\,\st \pi : \pi \in S \,\}\}$$
of all values of $\st$ among permutations in $S$, including multiplicity. For instance, 
$$
\des \mathfrak{S}_3 = \{\{ 0,1^4,2 \}\};
$$
among the six permutations in $\mathfrak{S}_3$, only 123 has no descents, only 321 has two descents, and the other four have one descent each. 

All of the statistics defined above have a remarkable property related to shuffles, called ``shuffle-compatibility''. 
We say that $\st$ is \textit{shuffle-compatible} if the distribution of $\st$ over the shuffles of any two disjoint permutations $\pi$ and $\sigma$ depends only on $\st\pi$, $\st\sigma$, and the lengths of $\pi$ and $\sigma$. In other words, $\st$ is shuffle-compatible if $\st(\pi \shuffle \sigma) = \st(\pi^\prime \shuffle \sigma^\prime)$ whenever $\st\pi = \st\pi^\prime$, $\st\sigma = \st \sigma^\prime$, $|\pi| = |\pi^\prime|$, and $|\sigma| = |\sigma^\prime|$. 

Shuffle-compatibility dates back to the early work of Stanley, as the shuffle-compatibility of the descent set, descent number, and major index are implicit consequences of the theory of $P$-partitions \cite{sta:osp}. Likewise, Stembridge's work on enriched $P$-partitions imply that the peak set and peak number are shuffle-compatible. Gessel and Zhuang coined the term ``shuffle-compatibility'' and initiated the study of shuffle-compatibility per se in 2018; in \cite{Gessel2018}, they developed an algebraic framework for shuffle-compatibility centered around the notion of the shuffle algebra of a shuffle-compatible permutation statistic, which is well-defined if and only if the statistic is shuffle-compatible and whose multiplication encodes the distribution of the statistic over sets of shuffles. 

Gessel's \cite{Gessel1984} quasisymmetric functions serve as natural generating functions for $P$-partitions, and for a special family of statistics called ``descent statistics'', one can use quasisymmetric functions to characterize shuffle algebras and prove shuffle-compatibility results. Notably, the multiplication rule for fundamental quasisymmetric functions shows that the descent set is shuffle-compatible and that its shuffle algebra is isomorphic to the algebra $\QSym$ of quasisymmetric functions. One of Gessel and Zhuang's main results is a necessary and sufficient condition for shuffle-compatibility of descent statistics which implies that the shuffle algebra of any shuffle-compatible descent statistic is isomorphic to a quotient algebra of $\QSym$.

In the past few years, shuffle-compatibility has become an active topic of research; see \cite{Adin2021, BakerJarvis2020, Detal:csc, Grinberg2018, KantarciOguz2018, 2209.00051, Moustakas2022, Yang2022, Zhuang2021} for a selection of references. Most relevant to our present work are the recent papers of Adin--Gessel--Reiner--Roichman \cite{Adin2021} and Liang \cite{2209.00051} on cyclic quasisymmetric functions and toric $[\vec{D}]$-partitions, and of Domagalski--Liang--Minnich--Sagan--Schmidt--Sietsema \cite{Detal:csc} which defined and studied a notion of shuffle-compatibility for cyclic permutations.

\subsection{Cyclic permutations, statistics, and shuffles}

Given a linear permutation $\pi = \pi_1 \pi_2 \cdots \pi_n$, let $[\pi]$ be the equivalence class of $\pi$ under cyclic rotation, that is, 
\[
[\pi]\coloneqq\{\pi_{1}\pi_{2}\cdots\pi_{n},\ \pi_n\pi_{1}\cdots\pi_{n-1},\
\dots,\ \pi_{2}\cdots\pi_{n}\pi_1\}.
\]
The sets $[\pi]$ are called \textit{cyclic permutations}. 
The \textit{length} of a cyclic permutation $[\pi]$ refers to the length of $\pi$, which makes sense because all linear permutation representatives of $[\pi]$ have the same length. 
For example,
$$
[168425]=\{168425, 516842, 251684, 425168, 842516, 684251\}
$$
has length $6$.

In analogy to linear permutation statistics, let us define a \textit{cyclic permutation statistic} to be a function $\cst$ on cyclic permutations such that $\cst[\pi] = \cst[\sigma]$ whenever $\pi$ and $\sigma$ have the same relative order. Two examples of cyclic permutation statistics are the cyclic descent set $\cDes$ and the cyclic descent number $\cdes$. First, define the \textit{cyclic descent set} of a linear permutation $\pi\in \mathfrak{P}_n$ by
\[
\cDes \pi \coloneqq\{\,i\in[n] : \pi_{i}>\pi_{i+1}\text{ where }i\text{ is considered modulo }n\,\};
\]
the elements of $\cDes\pi$ are called \textit{cyclic descents} of $\pi$. The \textit{cyclic descent set} of a cyclic permutation $[\pi]$ is the multiset 
$$\cDes[\pi] \coloneqq \{\{\,\cDes\bar{\pi}:\bar{\pi}\in[\pi]\,\}\},$$
i.e., the distribution of the linear statistic $\cDes$ over all linear permutation representatives of $[\pi]$. For example, we have
$$
\cDes[168425] = \{\{\, \{3,4,6\},\ \{1,4,5\},\ \{2,5,6\},\ \{1,3,6\},\ \{1,2,4\},\ \{2,3,5\} \,\}\},
$$
and 
$$
\cDes[279358] = \{\{\, \{3,6\}^2, \{1,4\}^2, \{2,5\}^2 \,\}.
$$
 Note that $\cDes[\pi]$ can also be characterized as the multiset of cyclic shifts of $\cDes\pi$. More precisely, given $S\subseteq [n]$ and an integer $i$, define the \textit{cyclic shift} $S+i$ by 
$$ S+i \coloneqq \{\, s+i : s\in S \,\}$$
where the values are considered modulo $n$; then
$$\cDes[\pi] = \{\{\,\cDes\pi+i : i\in[n]\,\}\}.$$

The \textit{cyclic descent number} of a linear permutation $\pi$ is given by
\[
\cdes \pi \coloneqq\left|\cDes \pi \right|,
\]
and we can then define the \textit{cyclic descent number} of a cyclic permutation $[\pi]$ by
\[
\cdes[\pi] \coloneqq \cdes\pi,
\]
which is well-defined because all linear permutations in $[\pi]$ have the same number of cyclic descents. The cyclic peak set $\cPk$ and cyclic peak number $\cpk$ can be defined in an analogous way, and we will state their definitions in Section \ref{ss-cPkcpk}. On the other hand, finding a suitable cyclic analogue of the major index statistic is challenging; we will address this in Section \ref{ss-cmaj}. 

Given disjoint $\pi \in \mathfrak{P}_m$ and $\sigma \in\mathfrak{P}_n$, we say that $[\tau]$ is a \textit{cyclic shuffle} of $[\pi]$ and $[\sigma]$ if $\tau\in \mathfrak{P}_{m+n}$ and there exist $\bar{\pi} \in [\pi]$ and $\bar{\sigma} \in [\sigma]$ such that $\tau$ is a (linear) shuffle of $\bar{\pi}$ and $\bar{\sigma}$. Let $[\pi]\shuffle[\sigma]$ denote the set of cyclic shuffles of $[\pi]$ and $[\sigma]$. For instance, we have 
$$
[63]\shuffle[24]=\{[6324],\ [6234],\ [6243],\ [6342],\ [6432],\ [6423]\}.
$$

A cyclic permutation statistic $\cst$ is called \textit{cyclic shuffle-compatible} if the distribution of $\cst$ over all cyclic shuffles of $[\pi]$ and $[\sigma]$ depends only on $\cst[\pi]$, $\cst[\sigma]$, and the lengths of $[\pi]$ and $[\sigma]$. That is, $\cst$ is cyclic shuffle-compatible if we have $\cst([\pi]\shuffle[\sigma])=\cst([\pi^\prime]\shuffle[\sigma^\prime])$ whenever $\cst[\pi]=\cst[\pi^\prime]$, $\cst[\sigma]=\cst[\sigma^\prime]$, $|\pi|=|\pi^\prime|$, and $|\sigma|=|\sigma^\prime|$.

The first results in cyclic shuffle-compatibility were implicit in the work of Adin et al.\ \cite{Adin2021}, which introduced toric $[\vec{D}]$-partitions (a toric poset analogue of $P$-partitions) and cyclic quasisymmetric functions (which are natural generating functions for toric $[\vec{D}]$-partitions). In particular, Adin et al.\ established a multiplication formula for fundamental cyclic quasisymmetric functions which implies that the cyclic descent set $\cDes$ is cyclic shuffle-compatible, and they also proved the formula 
\[
\sum_{[\tau]\in[\pi]\shuffle[\sigma]}q^{\cdes\tau}=(1-q)^{\left|\pi\right|+\left|\sigma\right|}\sum_{k=0}^{\infty}{k+\left|\pi\right|-\cdes\pi-1 \choose \left|\pi\right|-1}{k+\left|\sigma\right|-\cdes\sigma-1 \choose \left|\sigma\right|-1}kq^{k}
\]
which implies that the cyclic descent number $\cdes$ is cyclic shuffle-compatible. 

In \cite{Detal:csc}, Domagalski et al.\ formally defined cyclic shuffle-compatibility and proved a result called the ``lifting lemma,'' which allows one (under certain nice conditions) to prove that a cyclic statistic is cyclic shuffle-compatible from the shuffle-compatibility of a related linear statistic. They then used the lifting lemma to prove the cyclic shuffle-compatibility of all four statistics $\cDes$, $\cdes$, $\cPk$, and $\cpk$.

Most recently, Liang \cite{2209.00051} defined and studied enriched toric $[\vec{D}]$-partitions, an analogue of enriched $P$-partitions for toric posets, whose generating functions are ``cyclic peak quasisymmetric functions''. She derived a multiplication formula for these cyclic peak quasisymmetric functions which gives a different proof for the cyclic shuffle-compatibility of the cyclic peak set $\cPk$.

The lifting lemma of Domagalski et al.\ is purely combinatorial, but the work of Adin et al.\ and Liang suggest that there is an algebraic framework for cyclic shuffle-compatibility \`a la Gessel and Zhuang, in which the role of quasisymmetric functions is replaced by cyclic quasisymmetric functions. The goal of our paper is to develop this algebraic framework.

See \cite{LSZ-ea} for an extended abstract of this work.

\subsection{Outline}

The organization of this paper is as follows. In Section \ref{s-shufalg}, we review Gessel and Zhuang's definition of the shuffle algebra of a shuffle-compatible permutation statistic, and then we define the cyclic shuffle algebra of a cyclic shuffle-compatible statistic. We prove several general results about cyclic shuffle-compatibility via cyclic shuffle algebras, including a result (Theorem \ref{t-AtoAcyc}) allowing one to construct cyclic shuffle algebras from linear ones.

In Section \ref{s-scqsym}, we review the role of quasisymmetric functions in the theory of (linear) shuffle-compatibility, and then we develop an analogous theory concerning cyclic quasisymmetric functions and cyclic shuffle-compatibility. We use Theorem \ref{t-AtoAcyc} to construct the non-Escher subalgebra $\cQSym^-$ of cyclic quasisymmetric functions from the algebra $\QSym$ of quasisymmetric functions, which gives another proof that $\cDes$ is cyclic shuffle-compatible and shows that the cyclic shuffle algebra of $\cDes$ is isomorphic to $\cQSym^-$. We then give a necessary and sufficient condition for cyclic shuffle-compatibility of cyclic descent statistics which implies that the cyclic shuffle algebra of any cyclic shuffle-compatible cyclic descent statistic is isomorphic to a quotient algebra of $\cQSym^-$. 

In Section \ref{s-charcsa}, we use the theory developed in Section \ref{s-scqsym} to give explicit descriptions of the shuffle algebras of the statistics $\cPk$, $\cpk$ $\cdes$, and $(\cpk,\cdes)$ which in turn yields algebraic proofs for their cyclic shuffle-compatibility.

In Section \ref{s-induced}, we define a family of multiset-valued cyclic statistics induced from linear statistics, and investigate cyclic shuffle-compatibility for some of these statistics. This approach yields a definition of a cyclic major index which  is different from the one proposed earlier by Ji and Zhang \cite{Ji2022}; unfortunately, neither of these cyclic major index statistics are cyclic shuffle-compatible.

We conclude the paper in Section \ref{s-openprob} with a discussion of open problems and questions related to our work.

\section{Cyclic shuffle algebras} \label{s-shufalg}

At the heart of Gessel and Zhuang's algebraic framework for shuffle-compatibility is the notion of a shuffle algebra. In this section, we review the definition of the shuffle algebra of a shuffle-compatible (linear) permutation statistic, define a cyclic analogue of shuffle algebras for cyclic shuffle-compatible statistics, and prove several general results about cyclic shuffle-compatibility through cyclic shuffle algebras, including one that can be used to construct cyclic shuffle algebras from shuffle algebras of linear permutation statistics.

\subsection{Definitions}

Let $\st$ be a permutation statistic. We say that $\pi$ and $\sigma$ are $\st$-\textit{equivalent} if $\st \pi=\st \sigma$ and $\left|\pi\right|=\left|\sigma\right|$. In this way, every permutation statistic induces an equivalence relation on permutations, and we write the $\st$-equivalence class of $\pi$ as $\pi_{\st}$.\footnote{In \cite{Gessel2018}, the authors write $[\pi]_{\st}$ for the $\st$-equivalence class of $\pi$, but here we will use this notation for $\st$-equivalence classes of cyclic permutations in place of the more cumbersome $[[\pi]]_{\st}$.}

Let $\mathcal{A}_{\st}$ denote the $\mathbb{Q}$-vector space consisting of formal linear combinations of $\st$-equivalence classes of permutations. If $\st$ is shuffle-compatible, then we can turn $\mathcal{A}_{\st}$ into a $\mathbb{Q}$-algebra by endowing it with the multiplication
\[
\pi_{\st}\sigma_{\st}=\sum_{\tau\in \pi\shuffle\sigma}\tau_{\st}
\]
for any disjoint representatives $\pi \in \pi_{\st}$ and $\sigma \in \sigma_{\st}$; this multiplication is well-defined (i.e., the choice of $\pi$ and $\sigma$ does not matter) precisely when $\st$ is shuffle-compatible. The $\mathbb{Q}$-algebra $\mathcal{A}_{\st}$ is called the (\textit{linear}) \textit{shuffle algebra} of $\st$. Observe that ${\mathcal A}_{\st}$ is graded by length, that is, $\pi_{\st}$ belongs to the $n$th homogeneous component of ${\mathcal A}_{\st}$ if $\pi$ has length $n$.

Our definition of cyclic shuffle algebras will be analogous to that of linear ones. Let $\cst$ be a cyclic permutation statistic. Then the cyclic permutations $[\pi]$ and $[\sigma]$ are called \textit{$\cst$-equivalent} if $\cst[\pi]=\cst[\sigma]$ and $\left|\pi\right|=\left|\sigma\right|$, and we use the notation $[\pi]_{\cst}$ to denote the $\cst$-equivalence class of the cyclic permutation $[\pi]$. We associate to $\cst$ a $\mathbb{Q}$-vector space ${\mathcal A}_{\cst}^{\cyc}$ by taking as a basis the set of all $\cst$-equivalence classes of permutations, and then we give this vector space a multiplication by defining 
\[
[\pi]_{\cst}[\sigma]_{\cst}=\sum_{[\tau]\in [\pi]\shu[\sigma]}[\tau]_{\cst}
\]
for any disjoint $\pi$ and $\sigma$ with $[\pi] \in [\pi]_{\cst}$ and $[\sigma] \in [\sigma]_{\cst}$; this multiplication is well-defined if and only if $\cst$ is cyclic shuffle-compatible. The resulting $\mathbb{Q}$-algebra ${\mathcal A}_{\cst}^{\cyc}$ is called the \textit{cyclic shuffle algebra} of $\cst$, and is also graded by length.

\subsection{Two general results on cyclic shuffle algebras} \label{ss-2rcsa}

We now give two general results on cyclic shuffle algebras, which are analogous to Theorems 3.2 and 3.3 of \cite{Gessel2018} on linear shuffle algebras. We provide proofs for completeness, although they follow in essentially the same way as the proofs of the corresponding results in \cite{Gessel2018}.

Given two cyclic permutation statistics $\cst_{1}$ and $\cst_{2}$, we say that $\cst_{1}$ is a \textit{refinement} of $\cst_{2}$ if for all cyclic permutations $[\pi]$ and $[\sigma]$ of the same length, $\cst_{1}[\pi]=\cst_{1}[\sigma]$ implies $\cst_{2}[\pi]=\cst_{2}[\sigma]$; when this is true, we also say that $\cst_{2}$ is a \textit{coarsening} of $\cst_{1}$. Coarsenings of the cyclic descent set are called \textit{cyclic descent statistics}.
\begin{thm} \label{t-refine}
Suppose that $\cst_{1}$ is cyclic shuffle-compatible and is a refinement of $\cst_{2}$. Let $A$ be a $\mathbb{Q}$-algebra with basis $\{v_{\alpha}\}$ indexed by $\cst_{2}$-equivalence classes $\alpha$, and suppose that there exists a $\mathbb{Q}$-algebra homomorphism $\phi\colon{\mathcal A}_{\cst_{1}}^{\cyc}\rightarrow A$ such that for every $\cst_{1}$-equivalence class $\beta$, we have $\phi(\beta)=v_{\alpha}$ where $\alpha$ is the $\cst_{2}$-equivalence class containing $\beta$. Then $\cst_{2}$ is cyclic shuffle-compatible and the map $v_{\alpha}\mapsto\alpha$ extends by linearity to an isomorphism from $A$ to ${\mathcal A}_{\cst_{2}}^{\cyc}$.
\end{thm}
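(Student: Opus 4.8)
The plan is to mimic the proof of the corresponding linear result (Theorem 3.2 of \cite{Gessel2018}), using the algebra homomorphism $\phi$ to transport the well-definedness of multiplication from $\mathcal{A}_{\cst_1}^{\cyc}$ down to the candidate algebra structure on cyclic permutations modulo $\cst_2$-equivalence. The first step is to observe that $\cst_2$-equivalence classes of cyclic permutations of a fixed length are exactly the fibers of the map $\beta \mapsto \alpha$ sending a $\cst_1$-class to the $\cst_2$-class containing it, since $\cst_1$ refines $\cst_2$; in particular every $\cst_2$-class arises as such an $\alpha$, so the basis $\{v_\alpha\}$ of $A$ is genuinely indexed by all $\cst_2$-classes and the linear map $\psi\colon A \to \mathcal{A}_{\cst_2}^{\cyc}$ sending $v_\alpha \mapsto \alpha$ is a well-defined vector space isomorphism (it sends a basis bijectively to a basis).

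Next I would prove that $\cst_2$ is cyclic shuffle-compatible. Fix cyclic permutations $[\pi], [\pi'], [\sigma], [\sigma']$ with $\cst_2[\pi] = \cst_2[\pi']$, $\cst_2[\sigma] = \cst_2[\sigma']$, and matching lengths; I want $\cst_2([\pi]\shu[\sigma]) = \cst_2([\pi']\shu[\sigma'])$. The point is to apply $\phi$ to the product $[\pi]_{\cst_1}[\sigma]_{\cst_1}$ in $\mathcal{A}_{\cst_1}^{\cyc}$, which is legitimate since $\cst_1$ is cyclic shuffle-compatible. Because $\phi$ is an algebra homomorphism and sends each $\cst_1$-class to the $v_\alpha$ for its enclosing $\cst_2$-class, we get
\[
\phi\!\left([\pi]_{\cst_1}[\sigma]_{\cst_1}\right) = \phi\!\left(\sum_{[\tau] \in [\pi]\shu[\sigma]} [\tau]_{\cst_1}\right) = \sum_{[\tau] \in [\pi]\shu[\sigma]} v_{(\cst_2\text{-class of }[\tau])},
\]
and since $\{v_\alpha\}$ is a basis, the coefficient of each $v_\alpha$ on the right-hand side records exactly the multiplicity of $\alpha$ in the multiset $\cst_2([\pi]\shu[\sigma])$ — so this multiset is determined by $\phi$ of the product. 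But $\phi([\pi]_{\cst_1})\phi([\sigma]_{\cst_1}) = v_{\cst_2[\pi]}\,v_{\cst_2[\sigma]} = v_{\cst_2[\pi']}\,v_{\cst_2[\sigma']} = \phi([\pi']_{\cst_1})\phi([\sigma']_{\cst_1})$, using $\cst_2[\pi]=\cst_2[\pi']$ etc. and that $\phi$ is a homomorphism; hence $\cst_2([\pi]\shu[\sigma]) = \cst_2([\pi']\shu[\sigma'])$, giving cyclic shuffle-compatibility. In particular $\mathcal{A}_{\cst_2}^{\cyc}$ is now a well-defined $\mathbb{Q}$-algebra.

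Finally I would check that $\psi\colon A \to \mathcal{A}_{\cst_2}^{\cyc}$ is an algebra homomorphism, hence (being already a linear isomorphism) an algebra isomorphism. It suffices to verify multiplicativity on the basis: for $\cst_2$-classes $\alpha, \alpha'$ of lengths $m, n$, pick representatives $[\pi], [\sigma]$ and disjoint linear representatives, and compute $\psi(v_\alpha v_{\alpha'})$. Writing $v_\alpha v_{\alpha'} = \phi([\pi]_{\cst_1})\phi([\sigma]_{\cst_1}) = \phi([\pi]_{\cst_1}[\sigma]_{\cst_1})$, the displayed computation above expresses this as $\sum_{[\tau]\in[\pi]\shu[\sigma]} v_{(\cst_2\text{-class of }[\tau])}$, so $\psi(v_\alpha v_{\alpha'}) = \sum_{[\tau]\in[\pi]\shu[\sigma]} [\tau]_{\cst_2} = [\pi]_{\cst_2}[\sigma]_{\cst_2} = \psi(v_\alpha)\psi(v_{\alpha'})$, exactly the definition of the product in $\mathcal{A}_{\cst_2}^{\cyc}$. (One should also note $\phi$ sends the identity of $\mathcal{A}_{\cst_1}^{\cyc}$, namely the class of the empty cyclic permutation, to the corresponding $v_\alpha$, which $\psi$ returns to the identity of $\mathcal{A}_{\cst_2}^{\cyc}$.) The main subtlety — really the only place any care is needed — is the bookkeeping in the second step: ensuring that applying $\phi$ to a sum of $\cst_1$-classes and then reading off $v_\alpha$-coefficients genuinely reproduces the multiset distribution $\cst_2$ over the cyclic shuffle set, which hinges on $\cst_1$ refining $\cst_2$ so that distinct $\cst_1$-classes inside one $\cst_2$-class all map to the \emph{same} basis vector $v_\alpha$ and their multiplicities add correctly.
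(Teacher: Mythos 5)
Your proposal is correct and follows essentially the same route as the paper: apply $\phi$ to the product $[\pi]_{\cst_1}[\sigma]_{\cst_1}$, use the homomorphism property together with the fact that $\phi$ collapses each $\cst_1$-class to the basis vector of its enclosing $\cst_2$-class, and read off the identity $v_{[\pi]_{\cst_2}}v_{[\sigma]_{\cst_2}}=\sum_{[\tau]\in[\pi]\shu[\sigma]}v_{[\tau]_{\cst_2}}$, which simultaneously gives cyclic shuffle-compatibility of $\cst_2$ and the isomorphism $v_\alpha\mapsto\alpha$. The extra bookkeeping you supply (fibers of the refinement map, coefficient extraction against the basis) is exactly what the paper leaves implicit.
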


\begin{proof}
It suffices to show that for any disjoint $\pi$ and $\sigma$, we have
\[
v_{[\pi]_{\cst_{2}}}v_{[\sigma]_{\cst_{2}}}=\sum_{[\tau]\in [\pi]\shu[\sigma]}v_{[\tau]_{\cst_{2}}}.
\]
To that end, we have 
\begin{align*}
v_{[\pi]_{\cst_{2}}}v_{[\sigma]_{\cst_{2}}} & =\phi([\pi]_{\cst_{1}})\phi([\sigma]_{\cst_{1}})\\
 & =\phi([\pi]_{\cst_{1}}[\sigma]_{\cst_{1}})\\
 & =\phi\Bigg(\sum_{[\tau]\in [\pi]\shu[\sigma]}[\tau]_{\cst_{1}}\Bigg)\\
 & =\sum_{[\tau]\in [\pi]\shu[\sigma]}v_{[\tau]_{\cst_{2}}},
\end{align*}
which completes the proof.
\end{proof}

We say that $\cst_{1}$ and $\cst_{2}$ are \textit{equivalent} if $\cst_{1}$ is a simultaneously a refinement and a coarsening of $\cst_{2}$, that is, if for all cyclic permutations $[\pi]$ and $[\sigma]$ of the same length, $\cst_{1} [\pi]=\cst_{1} [\sigma]$ implies $\cst_{2} [\pi]=\cst_{2} [\sigma]$ and vice versa.

\begin{thm} \label{t-equiv}
Let $\cst_{1}$ and $\cst_{2}$ be equivalent cyclic permutation statistics. If $\cst_{1}$ is cyclic shuffle-compatible with cyclic shuffle algebra ${ \mathcal{A}}_{\cst_{1}}^{\cyc}$, then $\cst_{2}$ is also cyclic shuffle-compatible with cyclic shuffle algebra ${ \mathcal{A}}_{\cst_{2}}^{\cyc}$ isomorphic to ${\mathcal{A}}_{\cst_{1}}^{\cyc}$.
\end{thm}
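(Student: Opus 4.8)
The plan is to obtain Theorem~\ref{t-equiv} as an immediate consequence of Theorem~\ref{t-refine}, exactly paralleling how Theorem~3.3 of \cite{Gessel2018} follows from their Theorem~3.2. The crucial preliminary observation is that equivalence of $\cst_1$ and $\cst_2$ forces the two induced equivalence relations on cyclic permutations to coincide: if $[\pi]$ and $[\sigma]$ are $\cst_1$-equivalent, then they have the same length and $\cst_1[\pi]=\cst_1[\sigma]$, so the refinement hypothesis gives $\cst_2[\pi]=\cst_2[\sigma]$ and hence $[\pi],[\sigma]$ are $\cst_2$-equivalent; the reverse implication uses the coarsening hypothesis. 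Thus a subset of cyclic permutations is a $\cst_1$-equivalence class if and only if it is a $\cst_2$-equivalence class, and in particular $[\pi]_{\cst_1}=[\pi]_{\cst_2}$ for every cyclic permutation $[\pi]$.

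Granting this, I would apply Theorem~\ref{t-refine} with the roles of $\cst_1$ and $\cst_2$ as named in the present statement, taking $A\coloneqq{\mathcal A}_{\cst_1}^{\cyc}$ (which exists since $\cst_1$ is cyclic shuffle-compatible by hypothesis). By the preliminary observation, the distinguished basis $\{[\pi]_{\cst_1}\}$ of ${\mathcal A}_{\cst_1}^{\cyc}$ is simultaneously indexed by the $\cst_2$-equivalence classes, so writing $v_\alpha\coloneqq[\pi]_{\cst_1}$ for $[\pi]$ in the class $\alpha$ realizes $A$ as a $\mathbb{Q}$-algebra with basis $\{v_\alpha\}$ indexed by $\cst_2$-equivalence classes, as required. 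For the homomorphism $\phi\colon{\mathcal A}_{\cst_1}^{\cyc}\to A$ I would simply take the identity map, which is trivially a $\mathbb{Q}$-algebra homomorphism; for any $\cst_1$-equivalence class $\beta$, the class $\beta$ is also a $\cst_2$-equivalence class, and $\phi(\beta)=\beta=v_\beta$, which is precisely the compatibility condition demanded by Theorem~\ref{t-refine} (with $\alpha=\beta$).

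Theorem~\ref{t-refine} then yields at once that $\cst_2$ is cyclic shuffle-compatible and that the map $v_\alpha\mapsto\alpha$ extends by linearity to an algebra isomorphism from $A={\mathcal A}_{\cst_1}^{\cyc}$ onto ${\mathcal A}_{\cst_2}^{\cyc}$, which is exactly the assertion of Theorem~\ref{t-equiv}.

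I do not anticipate any real obstacle: the entire argument is a formal invocation of the previous theorem with the identity homomorphism. The only points that merit an explicit (but one-line) check are that ``equivalent'' genuinely collapses the two partitions of cyclic permutations into a single partition — here one must remember that $\cst$-equivalence has equal length built in, so the ``same length'' clause in the definition of refinement is not an obstruction — and that re-indexing the basis of ${\mathcal A}_{\cst_1}^{\cyc}$ by $\cst_2$-equivalence classes is legitimate, which follows from that same collapse. Both are immediate from the definitions.
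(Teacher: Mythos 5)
Your proof is correct. The paper proves Theorem~\ref{t-equiv} directly: it observes that equivalent statistics induce the same partition of cyclic permutations (so ${\mathcal A}_{\cst_1}^{\cyc}$ and ${\mathcal A}_{\cst_2}^{\cyc}$ have the same basis elements) and then checks in one line that $[\pi]_{\cst_2}[\sigma]_{\cst_2}=[\pi]_{\cst_1}[\sigma]_{\cst_1}=\sum_{[\tau]\in[\pi]\shuffle[\sigma]}[\tau]_{\cst_2}$. You instead package the same observation as an application of Theorem~\ref{t-refine} with $A={\mathcal A}_{\cst_1}^{\cyc}$ and $\phi$ the identity map. This is a legitimate and clean derivation --- your verification that the hypothesis $\phi(\beta)=v_\alpha$ holds is exactly right, since each $\cst_1$-class $\beta$ is itself the $\cst_2$-class containing it --- and when the invocation of Theorem~\ref{t-refine} is unwound it reproduces the paper's computation verbatim. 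The only thing either route really buys is economy: the direct check is two lines, while your version makes explicit that Theorem~\ref{t-equiv} is the special case of Theorem~\ref{t-refine} in which the refinement is trivial, mirroring the relationship between Theorems~3.2 and~3.3 of Gessel--Zhuang. Your side remark that the ``same length'' clause built into $\cst$-equivalence keeps the refinement condition from being an obstruction is a correct and worthwhile observation.
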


\begin{proof}
Because equivalent statistics have the same equivalence classes on cyclic permutations, we know that ${\mathcal{A}}_{\cst_{1}}^{\cyc}$ and ${\mathcal{A}}_{\cst_{2}}^{\cyc}$ have the same basis elements. Since $\cst_{1}$ and $\cst_{2}$ are equivalent, we have 
\[
[\pi]_{\st_{2}}[\sigma]_{\st_{2}}=[\pi]_{\st_{1}}[\sigma]_{\st_{1}}=\sum_{[\tau]\in[\pi]\shuffle[\sigma]}[\tau]_{\st_{1}}=\sum_{[\tau]\in[\pi]\shuffle[\sigma]}[\tau]_{\st_{2}},
\]
which proves the result.
\end{proof}

\subsection{Symmetries and cyclic shuffle algebras} \label{ss-symmetries}

Many permutation statistics---both linear and cyclic---are related via various symmetries, such as reversal, complementation, and reverse-complementation. For a linear permutation $\pi=\pi_{1}\pi_{2}\cdots\pi_{n}\in\mathfrak{P}_{n}$, we define the \textit{reversal} $\pi^{r}$ of $\pi$ by $\pi^{r}\coloneqq\pi_{n}\pi_{n-1}\cdots\pi_{1}$, the \textit{complement} $\pi^{c}$ of $\pi$ to be the permutation obtained by (simultaneously) replacing the $i$th smallest letter in $\pi$ with the $i$th largest letter in $\pi$ for all $1\leq i\leq n$, and the \textit{reverse-complement} $\pi^{rc}$ of $\pi$ by $\pi^{rc}\coloneqq(\pi^{r})^{c}=(\pi^{c})^{r}$. For example, given $\pi=318269$, we have $\pi^{r}=962813$, $\pi^{c}=692831$, and $\pi^{rc}=138296$.

More generally, let $f$ be an involution on linear permutations which
preserves the length, i.e., $\left|f(\pi)\right|=\left|\pi\right|$
for all $\pi$. We shall write $\pi^{f}$ in place of $f(\pi)$. For
a set $S$ of permutations, let
\[
S^{f}\coloneqq\{\,\pi^{f}:\pi\in S\,\},
\]
so $f$ induces an involution on sets of permutations as well. In
particular, this lets us define $[\pi]^{f}$ for a cyclic permutation
$[\pi]$. Going further, if $C$ is a set of cyclic permutations,
then
\[
C^{f}\coloneqq\{\,[\pi]^{f}:[\pi]\in C\,\}.
\]

Following Gessel and Zhuang \cite{Gessel2018}, we say that $f$ is \textit{shuffle-compatibility-preserving} if for any pair of disjoint permutations $\pi$ and $\sigma$, there exist disjoint permutations $\hat{\pi}$ and $\hat{\sigma}$ with the same relative order as $\pi$ and $\sigma$, respectively, such that $(\pi\shuffle\sigma)^{f}=\hat{\pi}^{f}\shuffle\hat{\sigma}^{f}$ and
$(\hat{\pi}\shuffle\hat{\sigma})^{f}=\pi^{f}\shuffle\sigma^{f}$. (This definition implies that $\pi^{f}$ and $\sigma^{f}$ are disjoint, and similarly with $\hat{\pi}^{f}$ and $\hat{\sigma}^{f}$.) 

Furthermore, we call two linear permutation statistics $\st_{1}$ and $\st_{2}$ $f$-\textit{equivalent} if $\st_{1}\circ f$ is equivalent to $\st_{2}$\textemdash that is, $\st_{1}\pi^{f}=\st_{1}\sigma^{f}$ if and only if $\st_{2}\pi=\st_{2}\sigma$. In other words, $\st_{1}$ and $\st_{2}$ are $f$-equivalent if and only if $(\pi^f)_{\st_{1}}=(\pi_{\st_{2}})^{f}$ for all $\pi$. It is easy to see that, if $\st_{1}\pi^{f}=\st_{2}\pi$ for all $\pi$, then $\st_{1}$ and $\st_{2}$ are $f$-equivalent (although this is not a necessary condition). 

For example, the peak set $\Pk$ is $c$-equivalent to the \textit{valley set} $\Val$ defined in the following way. We call $i\in\{2,3,\dots,n-1\}$ a \textit{valley} of $\pi\in\mathfrak{P}_{n}$ if $\pi_{i-1}>\pi_{i}<\pi_{i+1}$, and we let $\Val\pi$ be the set of valleys of $\pi$. We also define $\val\pi$ to be the number of valleys of $\pi$; then, $\pk$ and $\val$ are $c$-equivalent as well.

Despite its name, $f$-equivalence is not an equivalence relation
(although it is symmetric). However, it turns out that if the statistics involved are shuffle-compatible, then $f$-equivalences induce isomorphisms on the corresponding shuffle algebras. This idea is expressed in the following theorem, which is Theorem 3.5 of Gessel and Zhuang \cite{Gessel2018}.

\begin{thm} \label{t-linsym}
Let $f$ be shuffle-compatibility-preserving, and suppose that $\st_{1}$ and $\st_{2}$ are $f$-equivalent \textup{(}linear\textup{)} permutation statistics. If $\st_{1}$ is shuffle-compatible with shuffle algebra $\mathcal{A}_{\st_{1}}$, then $\st_{2}$ is also shuffle-compatible, and the linear map defined by $\pi_{\st_{1}}\mapsto\pi_{\st_{2}}^{f}$ is a $\mathbb{Q}$-algebra isomorphism between their shuffle algebras $\mathcal{A}_{\st_{1}}$ and $\mathcal{A}_{\st_{2}}$.
\end{thm}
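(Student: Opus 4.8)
The plan is to mirror the two assertions of the statement in order: first deduce that $\st_2$ is shuffle-compatible, and only then—once $\mathcal{A}_{\st_2}$ is a bona fide algebra—verify that the proposed map is a bijective algebra homomorphism. The one mechanism driving everything is a reformulation of $f$-equivalence at the level of statistic \emph{values}: for each fixed length there is a bijection $g$ (depending on that length) between the attained values of $\st_1$ and of $\st_2$ satisfying $\st_2\tau=g(\st_1\tau^{f})$ for all $\tau$. Such a $g$ is well-defined and injective precisely because $f$-equivalence is an ``if and only if'' statement, $\st_1\alpha^{f}=\st_1\beta^{f}\Leftrightarrow\st_2\alpha=\st_2\beta$, and because $f$ is a length-preserving involution. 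Intuitively, $g$ lets me transport any $\st_2$-computation on a set $S$ of permutations to an $\st_1$-computation on $S^{f}$ and back.

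For the shuffle-compatibility of $\st_2$, I would fix disjoint $\pi,\sigma$ and observe that $\tau\mapsto\tau^{f}$ is a bijection from $\pi\shuffle\sigma$ onto $(\pi\shuffle\sigma)^{f}$, so that $\st_2(\pi\shuffle\sigma)=g\bigl(\st_1((\pi\shuffle\sigma)^{f})\bigr)$ as multisets. Invoking the shuffle-compatibility-preserving hypothesis, I rewrite $(\pi\shuffle\sigma)^{f}=\hat\pi^{f}\shuffle\hat\sigma^{f}$ for suitable $\hat\pi,\hat\sigma$ sharing the relative orders of $\pi,\sigma$. Shuffle-compatibility of $\st_1$ then makes $\st_1(\hat\pi^{f}\shuffle\hat\sigma^{f})$ depend only on $\st_1\hat\pi^{f}$, $\st_1\hat\sigma^{f}$, and the two lengths; and since $\st_2\hat\pi=\st_2\pi$ (equal relative orders), the value $\st_1\hat\pi^{f}=g^{-1}(\st_2\pi)$ is determined by $\st_2\pi$, and likewise $\st_1\hat\sigma^{f}$ by $\st_2\sigma$. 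Feeding this back through $g$ shows that $\st_2(\pi\shuffle\sigma)$ depends only on $\st_2\pi,\st_2\sigma,|\pi|,|\sigma|$, which is the desired shuffle-compatibility.

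With $\mathcal{A}_{\st_2}$ now available as an algebra, I would introduce $\Phi\colon\mathcal{A}_{\st_1}\to\mathcal{A}_{\st_2}$ by sending $\pi_{\st_1}$ to the $\st_2$-class $(\pi^{f})_{\st_2}$ (equivalently $(\pi_{\st_1})^{f}$, using $f$-equivalence together with $f^{2}=\mathrm{id}$). Well-definedness is immediate from $f$-equivalence, and because $f$ is an involution the assignment $\tau_{\st_2}\mapsto(\tau^{f})_{\st_1}$ furnishes a two-sided inverse, so $\Phi$ is a linear isomorphism carrying basis to basis. For multiplicativity I would evaluate $(\pi^{f})_{\st_2}(\sigma^{f})_{\st_2}$ using the disjoint representatives $\pi^{f},\sigma^{f}$—which the hypothesis guarantees are disjoint—obtaining $\sum_{\upsilon\in\pi^{f}\shuffle\sigma^{f}}\upsilon_{\st_2}$. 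The second identity in the shuffle-compatibility-preserving hypothesis, $(\hat\pi\shuffle\hat\sigma)^{f}=\pi^{f}\shuffle\sigma^{f}$, rewrites this as $\sum_{\rho\in\hat\pi\shuffle\hat\sigma}(\rho^{f})_{\st_2}$. Finally, since the basis element $(\rho^{f})_{\st_2}$ depends only on $\st_1\rho$, and since shuffle-compatibility of $\st_1$ together with $\st_1\hat\pi=\st_1\pi$ and $\st_1\hat\sigma=\st_1\sigma$ gives $\st_1(\hat\pi\shuffle\hat\sigma)=\st_1(\pi\shuffle\sigma)$, this equals $\sum_{\tau\in\pi\shuffle\sigma}(\tau^{f})_{\st_2}=\Phi(\pi_{\st_1}\sigma_{\st_1})$, proving $\Phi$ multiplicative.

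The main obstacle I anticipate is the bookkeeping around the auxiliary permutations $\hat\pi,\hat\sigma$: one must apply the two identities of the shuffle-compatibility-preserving hypothesis in exactly the right places and repeatedly use invariance under relative order to match statistic values, while keeping the two directions of the translation $g$ straight. A secondary, purely organizational subtlety is that the product on $\mathcal{A}_{\st_2}$ may only be used \emph{after} $\st_2$ has been shown shuffle-compatible, so the shuffle-compatibility argument must be completed first and kept logically independent of the multiplicativity computation.
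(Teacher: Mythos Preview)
The paper does not actually prove this theorem; it is quoted as Theorem~3.5 of Gessel--Zhuang \cite{Gessel2018} and used as a black box. Your argument is correct and, in fact, parallels exactly the strategy the paper does carry out for the cyclic analogue (Theorem~\ref{t-cycsym}): first establish shuffle-compatibility of $\st_2$ by transporting the $\st_2$-distribution over $\pi\shuffle\sigma$ through $f$ to an $\st_1$-distribution over $\hat\pi^{f}\shuffle\hat\sigma^{f}$ and back, then verify multiplicativity of the induced map on equivalence classes using the second identity $(\hat\pi\shuffle\hat\sigma)^{f}=\pi^{f}\shuffle\sigma^{f}$ together with shuffle-compatibility of $\st_1$. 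Your explicit bijection $g$ between value-sets is a harmless bookkeeping device that the paper's cyclic proof handles implicitly by working directly with equivalence classes; otherwise the two arguments are the same.
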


Gessel and Zhuang proved that reversal, complementation, and reverse-complementation are all shuffle-compatibility-preserving. Thus, they were able to use Theorem \ref{t-linsym} to prove a collection of shuffle-compatibility results for statistics that are $r$-, $c$-, or $rc$-equivalent to another statistic whose shuffle-compatibility had already been established. For example, it follows from the shuffle-compatibility of the peak set $\Pk$ that the valley set $\Val$ is shuffle-compatible with shuffle algebra $\mathcal{A}_{\Val}$ isomorphic to $\mathcal{A}_{\Pk}$.

Moving onto the cyclic setting, let us call $f$ \textit{rotation-preserving} if $[\pi]^{f}=[\pi^{f}]$ for all $\pi$. We now prove that if $f$ is both shuffle-compatibility-preserving and rotation-preserving, then $f$ satisfies a cyclic version of the shuffle-compatibility-preserving property.
\begin{lem} \label{l-cscp}
If $f$ is shuffle-compatibility-preserving and rotation-preserving, then for any pair of disjoint permutations $\pi$ and $\sigma$, there exist disjoint permutations $\hat{\pi}$ and $\hat{\sigma}$ with the same relative order as $\pi$ and $\sigma$, respectively, for which $([\pi]\shuffle[\sigma])^{f}=[\hat{\pi}^{f}]\shuffle[\hat{\sigma}^{f}]$ and $([\hat{\pi}]\shuffle[\hat{\sigma}])^{f}=[\pi^{f}]\shuffle[\sigma^{f}]$.
\end{lem}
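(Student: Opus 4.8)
The plan is to leverage the linear shuffle-compatibility-preserving property of $f$ directly, using the rotation-preserving hypothesis to descend to cyclic permutations. First I would unwind the definition of cyclic shuffle: for disjoint $\pi$ and $\sigma$, a class $[\tau]$ lies in $[\pi]\shuffle[\sigma]$ exactly when $\tau$ is a linear shuffle of some $\bar\pi\in[\pi]$ and some $\bar\sigma\in[\sigma]$. Hence, as a set of cyclic permutations,
\[
[\pi]\shuffle[\sigma]=\bigl\{\,[\tau]:\tau\in\bar\pi\shuffle\bar\sigma\text{ for some }\bar\pi\in[\pi],\ \bar\sigma\in[\sigma]\,\bigr\}=\bigcup_{\bar\pi\in[\pi],\,\bar\sigma\in[\sigma]}\bigl\{\,[\tau]:\tau\in\bar\pi\shuffle\bar\sigma\,\bigr\}.
\]
Applying $f$ to this set of cyclic permutations and using the rotation-preserving property $[\tau]^{f}=[\tau^{f}]$, we get $([\pi]\shuffle[\sigma])^{f}=\bigcup_{\bar\pi,\bar\sigma}\{\,[\tau^{f}]:\tau\in\bar\pi\shuffle\bar\sigma\,\}=\bigcup_{\bar\pi,\bar\sigma}\{\,[\rho]:\rho\in(\bar\pi\shuffle\bar\sigma)^{f}\,\}$.

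Next I would invoke the hypothesis that $f$ is shuffle-compatibility-preserving. This gives disjoint $\hat\pi,\hat\sigma$ with the same relative order as $\pi,\sigma$ such that $(\pi\shuffle\sigma)^{f}=\hat\pi^{f}\shuffle\hat\sigma^{f}$ and $(\hat\pi\shuffle\hat\sigma)^{f}=\pi^{f}\shuffle\sigma^{f}$. The point I need is that the same $\hat\pi,\hat\sigma$ work simultaneously for every pair of cyclic representatives. Here I would note that the shuffle-compatibility-preserving property, when one inspects which $\hat\pi,\hat\sigma$ it produces (as in Gessel–Zhuang's treatment of $r$, $c$, $rc$), yields $\hat\pi,\hat\sigma$ depending only on the relative orders of $\pi,\sigma$ in a "local" way—more precisely, one can check that cyclic rotation of $\pi$ corresponds under $f$ to cyclic rotation of $\hat\pi$, because $r$, $c$, $rc$ are each rotation-preserving and behave coherently on subwords. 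So one may choose the correspondence $\bar\pi\mapsto\widehat{\bar\pi}$ compatibly with rotation, i.e., $[\widehat{\bar\pi}]=[\hat\pi]$ for all $\bar\pi\in[\pi]$, and likewise for $\sigma$. Then
\[
([\pi]\shuffle[\sigma])^{f}=\bigcup_{\bar\pi,\bar\sigma}\{\,[\rho]:\rho\in\widehat{\bar\pi}^{f}\shuffle\widehat{\bar\sigma}^{f}\,\}=[\hat\pi^{f}]\shuffle[\hat\sigma^{f}],
\]
where the last equality is again just the definition of cyclic shuffle, now applied to the disjoint permutations $\hat\pi^{f}$ and $\hat\sigma^{f}$ and ranging over all their rotations (which are exactly the $\widehat{\bar\pi}^{f}$, $\widehat{\bar\sigma}^{f}$). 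The second identity $([\hat\pi]\shuffle[\hat\sigma])^{f}=[\pi^{f}]\shuffle[\sigma^{f}]$ follows by the symmetric computation using $(\hat\pi\shuffle\hat\sigma)^{f}=\pi^{f}\shuffle\sigma^{f}$.

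The main obstacle will be the coherence claim in the middle paragraph: that a single choice of $\hat\pi,\hat\sigma$ serves all cyclic representatives, i.e., that the assignment $\bar\pi\mapsto\widehat{\bar\pi}$ can be taken rotation-equivariant. For an arbitrary abstract shuffle-compatibility-preserving involution this might require care, but in practice $f$ will be $r$, $c$, or $rc$, and for these the relevant $\hat\pi,\hat\sigma$ are obtained by an explicit order-preserving relabeling that visibly commutes with cyclic rotation—so I would either (a) prove the lemma under the mild extra assumption that $f$ is one of these standard symmetries, citing the explicit constructions in \cite{Gessel2018}, or (b) isolate the needed coherence as the precise content of "rotation-preserving plus shuffle-compatibility-preserving" and verify it holds in the cases of interest. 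Everything else is a routine manipulation of the definitions of $[\pi]\shuffle[\sigma]$ and of $(\,\cdot\,)^{f}$ on sets of (cyclic) permutations.
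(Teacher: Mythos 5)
Your outline follows the same route as the paper's proof: pass to linear representatives, apply the shuffle-compatibility-preserving property to each pair $\bar{\pi}\shuffle\bar{\sigma}$, and use rotation-preservation to descend to cyclic classes. You have also correctly put your finger on the delicate point. But as written your argument has a genuine gap: the ``coherence claim'' in your middle paragraph --- that the assignment $\bar{\pi}\mapsto\widehat{\bar{\pi}}$ can be chosen rotation-equivariantly so that a single $\hat{\pi},\hat{\sigma}$ serves every pair of representatives --- is exactly the content you would need to prove, and you do not prove it. Your fallback (a) establishes only a weaker statement (the lemma restricted to $r$, $c$, $rc$), and fallback (b) is a promissory note rather than an argument. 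Since your strategy is to verify the set equality by exhibiting both containments via the union decomposition, the whole proof hinges on this unproven step.

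The ingredient you are missing is the paper's cardinality argument, which substantially reduces what must be checked. The paper fixes $\hat{\pi},\hat{\sigma}$ (obtained by noting that $\widehat{\bar{\pi}}$, having the same relative order as a rotation of $\pi$, is itself a rotation of a permutation $\hat{\pi}$ with the same relative order as $\pi$, so that one only ever works with the cyclic class $[\widehat{\bar{\pi}}]=[\hat{\pi}]$), proves the \emph{single} containment $([\pi]\shuffle[\sigma])^{f}\subseteq[\hat{\pi}^{f}]\shuffle[\hat{\sigma}^{f}]$, and then concludes equality because both sides are finite sets of the same cardinality: $f$ induces a bijection on cyclic permutations, and $\left|[\pi]\shuffle[\sigma]\right|=(m+n-1)\binom{m+n-2}{m-1}$ depends only on the lengths $m=|\pi|$, $n=|\sigma|$, which are shared by $\hat{\pi}^{f}$ and $\hat{\sigma}^{f}$. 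This removes the need to produce the reverse containment (and hence most of the equivariance you were worried about); the second identity $([\hat{\pi}]\shuffle[\hat{\sigma}])^{f}=[\pi^{f}]\shuffle[\sigma^{f}]$ is then handled symmetrically. If you rework your proof, replace the two-sided union manipulation by this ``one containment plus counting'' step.
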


\begin{proof}
Let $[\tau]\in[\pi]\shuffle[\sigma]$, so that $\tau\in\bar{\pi}\shuffle\bar{\sigma}$ for some $\bar{\pi}\in[\pi]$ and $\bar{\sigma}\in[\sigma]$, and thus $\tau^{f}\in(\bar{\pi}\shuffle\bar{\sigma})^{f}$. Since $f$ is shuffle-compatibility-preserving, we have that $\tau^{f}\in\hat{\bar{\pi}}^{f}\shuffle\hat{\bar{\sigma}}^{f}$ where $\hat{\bar{\pi}}$ and $\hat{\bar{\sigma}}$ are disjoint permutations with the same relative order as $\bar{\pi}$ and $\bar{\sigma}$, respectively. Since $\bar{\pi}$ is a rotation of $\pi$ and $\hat{\bar{\pi}}$ has the same relative order as $\bar{\pi}$, it follows that $\hat{\bar{\pi}}$ is a rotation of a permutation $\hat{\pi}$ with the same relative order as $\pi$, and similarly $\hat{\bar{\sigma}}$ is a rotation of a permutation $\hat{\sigma}$ with the same relative order as $\sigma$.  Clearly, $\hat{\pi}$ and $\hat{\sigma}$ are disjoint because $\hat{\bar{\pi}}$ and $\hat{\bar{\sigma}}$ are disjoint. Because $f$ is rotation-preserving, $\hat{\bar{\pi}}\in[\hat{\pi}]$ and $\hat{\bar{\sigma}}\in[\hat{\sigma}]$ imply $\hat{\bar{\pi}}^{f}\in[\hat{\pi}^{f}]$ and $\hat{\bar{\sigma}}\in[\hat{\sigma}^{f}]$. Therefore, $\tau^{f}\in\hat{\bar{\pi}}^{f}\shuffle\hat{\bar{\sigma}}^{f}$ implies $[\tau]^{f}=[\tau^{f}]\in[\hat{\pi}^{f}]\shuffle[\hat{\sigma}^{f}]$. 

We have shown that $([\pi]\shuffle[\sigma])^{f}$ is a
subset of $[\hat{\pi}^{f}]\shuffle[\hat{\sigma}^{f}]$, but since these two sets have the same cardinality, they are in fact equal. 
We omit the proof of $([\hat{\pi}]\shuffle[\hat{\sigma}])^{f}=[\pi^{f}]\shuffle[\sigma^{f}]$ as it is similar.
\end{proof}

\begin{lem} \label{l-rotp}
Reversal, complementation, and reverse-complementation are all rotation-preserving.
\end{lem}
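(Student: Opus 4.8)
The plan is to treat the three symmetries one at a time, cutting down the work by observing that rotation-preservation is closed under composition of commuting involutions. Since $\pi^{rc}=(\pi^{r})^{c}=(\pi^{c})^{r}$, once we know $[\pi]^{r}=[\pi^{r}]$ and $[\pi]^{c}=[\pi^{c}]$ for all $\pi$, we get $[\pi]^{rc}=([\pi]^{r})^{c}=[\pi^{r}]^{c}=[(\pi^{r})^{c}]=[\pi^{rc}]$ for free. So it suffices to handle reversal and complementation.

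For complementation, the key point is that the complement of a letter $\pi_{i}$ depends only on the rank of $\pi_{i}$ among the letters appearing in $\pi$, and this multiset of letters is unchanged by rotation. Hence if $\bar{\pi}=\pi_{k}\pi_{k+1}\cdots\pi_{n}\pi_{1}\cdots\pi_{k-1}$ is the rotation of $\pi$ starting at index $k$, then $\bar{\pi}^{c}$ is obtained from $\pi^{c}=\pi_{1}^{c}\pi_{2}^{c}\cdots\pi_{n}^{c}$ by that same rotation, so $\bar{\pi}^{c}\in[\pi^{c}]$; this gives $[\pi]^{c}\subseteq[\pi^{c}]$, and applying the same fact with $\pi^{c}$ in place of $\pi$ (together with $c^{2}=\mathrm{id}$) yields the reverse inclusion, so $[\pi]^{c}=[\pi^{c}]$.

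For reversal I would argue by a direct index computation. Writing again $\bar{\pi}=\pi_{k}\pi_{k+1}\cdots\pi_{n}\pi_{1}\cdots\pi_{k-1}$, one checks that $\bar{\pi}^{r}=\pi_{k-1}\pi_{k-2}\cdots\pi_{1}\pi_{n}\pi_{n-1}\cdots\pi_{k}$ is precisely the rotation of $\pi^{r}=\pi_{n}\pi_{n-1}\cdots\pi_{1}$ beginning at the slot occupied by $\pi_{k-1}$ (that is, slot $n+2-k$ read modulo $n$); the endpoint check is that the last letter of that rotation is $\pi_{n+2-(n+2-k)}=\pi_{k}$, as required. This shows $[\pi]^{r}\subseteq[\pi^{r}]$; applying this inclusion to $\pi^{r}$ in place of $\pi$ and then applying $r$ to both sides (using $r^{2}=\mathrm{id}$) gives $[\pi^{r}]\subseteq[\pi]^{r}$, hence $[\pi]^{r}=[\pi^{r}]$.

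All three verifications are routine; the only spot needing a little care is the bookkeeping of the index shift in the reversal case, and combining the two inclusions correctly via the fact that $r$, $c$, and $rc$ are involutions (so no separate cardinality count is needed, though one could equally note $|[\pi]^{f}|=|[\pi^{f}]|$ since $f$ is a bijection). I do not anticipate any genuine obstacle.
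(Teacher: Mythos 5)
Your proof is correct and follows essentially the same route as the paper: verify reversal by a direct computation on rotations, verify complementation by noting that complementation commutes with rotation since it depends only on the multiset of letters, and deduce the reverse-complement case by composing the two. The only cosmetic differences are that you phrase the reversal step as an index-shift calculation with a double-inclusion argument (via the involution property), whereas the paper simply lists the set $[\pi]^r$ and observes it equals $[\pi^r]$ outright.
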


\begin{proof}
Let $\pi=\pi_{1}\pi_{2}\cdots\pi_{n}$ be a (linear) permutation. We have 
\begin{align*}
[\pi]^{r} & =\{\pi_{1}\pi_{2}\cdots\pi_{n},\pi_{n}\pi_{1}\cdots\pi_{n-1},\dots,\pi_{2}\cdots\pi_{n}\pi_{1}\}^{r}\\
 & =\{\pi_{n}\cdots\pi_{2}\pi_{1},\pi_{n-1}\cdots\pi_{1}\pi_{n},\dots,\pi_{1}\pi_{n}\cdots\pi_{2}\}\\
 & =[\pi^{r}],
\end{align*}
so reversal is rotation-preserving. Moreover, it is clear that taking the complement of the permutation $\pi_{i+1}\cdots\pi_{n}\pi_{1}\cdots\pi_{i}$ (obtained by rotating the last $n-i$ letters of $\pi$ to the front) yields the same result as first taking the complement of $\pi$ and then rotating the last $n-i$ letters of $\pi^{c}$ to the front, so complementation is rotation-preserving. Lastly, since we have established that $[\pi^{c}]=[\pi]^{c}$ for all permutations $\pi$, we can replace $\pi$ by $\pi^{r}$ to obtain $[\pi^{rc}]=[\pi^{r}]^{c}=[\pi]^{rc}$, so reverse-complementation is rotation-preserving as well.
\end{proof}

In analogy with $f$-equivalence of linear permutation statistics,
let us call two cyclic permutation statistics $\cst_{1}$ and $\cst_{2}$
\textit{$f$-equivalent} if $\cst_{1}\circ f$ is equivalent to $\cst_{2}$,
or equivalently, if $[\pi^{f}]_{\cst_{1}}=([\pi]_{\cst_{2}})^{f}$.
The following is a cyclic version of Theorem \ref{t-linsym}.

\begin{thm}
\label{t-cycsym}Let $f$ be shuffle-compatibility-preserving and
rotation-preserving, and let $\cst_{1}$ and $\cst_{2}$ be $f$-equivalent
cyclic permutation statistics. If $\cst_{1}$ is cyclic shuffle-compatible,
then $\cst_{2}$ is cyclic shuffle-compatible with $\mathcal{A}_{\cst_{2}}^{\cyc}$
isomorphic to $\mathcal{A}_{\cst_{1}}^{\cyc}$.
\end{thm}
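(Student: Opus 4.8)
The plan is to mimic the proof of Theorem \ref{t-linsym} (Theorem 3.5 of Gessel--Zhuang) with the linear shuffle-compatibility-preserving property replaced by its cyclic counterpart established in Lemma \ref{l-cscp}. First I would verify that $\cst_2$ is cyclic shuffle-compatible: given disjoint $\pi,\sigma$ and disjoint $\pi',\sigma'$ with $\cst_2[\pi]=\cst_2[\pi']$, $\cst_2[\sigma]=\cst_2[\sigma']$, and matching lengths, I must show $\cst_2([\pi]\shuffle[\sigma])=\cst_2([\pi']\shuffle[\sigma'])$. Using that $\cst_1$ and $\cst_2$ are $f$-equivalent (so $[\rho^f]_{\cst_1}=([\rho]_{\cst_2})^f$ for all $\rho$, and in particular $\cst_1\circ f$ separates cyclic permutations exactly as $\cst_2$ does), the hypotheses on $\pi,\sigma,\pi',\sigma'$ translate into $\cst_1[\pi^f]=\cst_1[(\pi')^f]$ and $\cst_1[\sigma^f]=\cst_1[(\sigma')^f]$. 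Then I apply Lemma \ref{l-cscp} to pick $\hat\pi,\hat\sigma$ (with the same relative order as $\pi,\sigma$) so that $([\hat\pi]\shuffle[\hat\sigma])^f=[\pi^f]\shuffle[\sigma^f]$, and similarly $\hat{\pi}',\hat{\sigma}'$ for $\pi',\sigma'$; since relative order is all that matters for a cyclic statistic, $\cst_1$ is constant on $[\hat\pi^f]=[\pi^f]_{?}$-type classes, and the cyclic shuffle-compatibility of $\cst_1$ applied to $\hat\pi^f,\hat\sigma^f$ versus $\hat{\pi'}^f,\hat{\sigma'}^f$ gives equality of the $\cst_1$-distributions over $[\hat\pi^f]\shuffle[\hat\sigma^f]$ and $[\hat{\pi'}^f]\shuffle[\hat{\sigma'}^f]$. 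Transporting back along $f$ (which is a bijection on cyclic permutations, being an involution, and which by $f$-equivalence carries the $\cst_1$-distribution to the $\cst_2$-distribution) yields $\cst_2([\pi]\shuffle[\sigma])=\cst_2([\pi']\shuffle[\sigma'])$.

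Once cyclic shuffle-compatibility of $\cst_2$ is in hand, the cyclic shuffle algebra $\mathcal{A}_{\cst_2}^{\cyc}$ is well-defined, and I would exhibit the isomorphism $\Phi\colon\mathcal{A}_{\cst_1}^{\cyc}\to\mathcal{A}_{\cst_2}^{\cyc}$ defined on basis elements by $[\pi]_{\cst_1}\mapsto([\pi]_{\cst_2})^f=[\pi^f]_{\cst_2}$. This is well-defined and bijective precisely because $f$-equivalence says $[\pi]_{\cst_1}=[\sigma]_{\cst_1}\iff [\pi^f]_{\cst_2}=[\sigma^f]_{\cst_2}$, and $f$ being an involution makes $\rho\mapsto\rho^f$ a bijection between the two sets of equivalence classes; it is clearly length-preserving, hence graded. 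To check $\Phi$ is an algebra homomorphism, I would compute, for disjoint $\pi,\sigma$,
\[
\Phi([\pi]_{\cst_1})\Phi([\sigma]_{\cst_1})=[\pi^f]_{\cst_2}[\sigma^f]_{\cst_2}=\sum_{[\upsilon]\in[\pi^f]\shuffle[\sigma^f]}[\upsilon]_{\cst_2},
\]
using well-definedness of the product (any representatives will do). On the other hand, choosing $\hat\pi,\hat\sigma$ as in Lemma \ref{l-cscp} with $([\hat\pi]\shuffle[\hat\sigma])^f=[\pi^f]\shuffle[\sigma^f]$, and noting $[\hat\pi]_{\cst_1}=[\pi]_{\cst_1}$, $[\hat\sigma]_{\cst_1}=[\sigma]_{\cst_1}$ (same relative order), I get
\[
\Phi([\pi]_{\cst_1}[\sigma]_{\cst_1})=\Phi\Bigl(\sum_{[\tau]\in[\hat\pi]\shuffle[\hat\sigma]}[\tau]_{\cst_1}\Bigr)=\sum_{[\tau]\in[\hat\pi]\shuffle[\hat\sigma]}[\tau^f]_{\cst_2}=\sum_{[\upsilon]\in([\hat\pi]\shuffle[\hat\sigma])^f}[\upsilon]_{\cst_2}=\sum_{[\upsilon]\in[\pi^f]\shuffle[\sigma^f]}[\upsilon]_{\cst_2},
\]
so the two agree.

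The main obstacle — as in the linear case — is keeping the bookkeeping straight between the two instances of $f$-equivalence (the "constant relative order" substitution producing $\hat\pi,\hat\sigma$) and making sure one uses the \emph{correct} one of the two identities in Lemma \ref{l-cscp}: the product in $\mathcal{A}_{\cst_2}^{\cyc}$ of $[\pi^f]_{\cst_2}$ and $[\sigma^f]_{\cst_2}$ is a sum over $[\pi^f]\shuffle[\sigma^f]$, which matches $([\hat\pi]\shuffle[\hat\sigma])^f$ rather than $([\pi]\shuffle[\sigma])^f$, so the $\hat{\cdot}$ detour is genuinely needed rather than cosmetic. A secondary subtlety is confirming that applying an involution $f$ to cyclic permutations does not change multiplicities in the distribution multisets — this is immediate since $f$ is a bijection on cyclic permutations of each fixed length. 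I would also remark, paralleling the comment after Theorem \ref{t-linsym}, that the special case $\cst_1[\pi^f]=\cst_2[\pi]$ for all $\pi$ automatically gives $f$-equivalence, which is how the theorem will be applied to reversal, complementation, and reverse-complementation (all rotation-preserving by Lemma \ref{l-rotp} and shuffle-compatibility-preserving by Gessel--Zhuang).

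\begin{proof}
We first show that $\cst_{2}$ is cyclic shuffle-compatible. Suppose that $\pi,\sigma$ are disjoint and $\pi',\sigma'$ are disjoint with $\cst_{2}[\pi]=\cst_{2}[\pi']$, $\cst_{2}[\sigma]=\cst_{2}[\sigma']$, $|\pi|=|\pi'|$, and $|\sigma|=|\sigma'|$. Since $\cst_{1}$ and $\cst_{2}$ are $f$-equivalent, $\cst_{1}\circ f$ is equivalent to $\cst_{2}$, so $\cst_{1}[\pi^{f}]=\cst_{1}[(\pi')^{f}]$ and $\cst_{1}[\sigma^{f}]=\cst_{1}[(\sigma')^{f}]$; also $|\pi^{f}|=|\pi|=|\pi'|=|(\pi')^{f}|$ and likewise for $\sigma$. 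As $f$ is shuffle-compatibility-preserving and rotation-preserving, Lemma \ref{l-cscp} provides disjoint $\hat{\pi},\hat{\sigma}$ with the same relative order as $\pi,\sigma$ such that $([\hat{\pi}]\shuffle[\hat{\sigma}])^{f}=[\pi^{f}]\shuffle[\sigma^{f}]$, and disjoint $\hat{\pi}',\hat{\sigma}'$ with the same relative order as $\pi',\sigma'$ such that $([\hat{\pi}']\shuffle[\hat{\sigma}'])^{f}=[(\pi')^{f}]\shuffle[(\sigma')^{f}]$. Because cyclic statistics depend only on relative order, $\cst_{1}[\hat{\pi}^{f}]=\cst_{1}[\pi^{f}]=\cst_{1}[(\pi')^{f}]=\cst_{1}[\hat{\pi}'^{f}]$ and similarly $\cst_{1}[\hat{\sigma}^{f}]=\cst_{1}[\hat{\sigma}'^{f}]$, with matching lengths. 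Since $\cst_{1}$ is cyclic shuffle-compatible,
\[
\cst_{1}\bigl([\hat{\pi}^{f}]\shuffle[\hat{\sigma}^{f}]\bigr)=\cst_{1}\bigl([\hat{\pi}'^{f}]\shuffle[\hat{\sigma}'^{f}]\bigr).
\]
By Lemma \ref{l-cscp} again (the other identity, with $\hat{\pi}$ in place of $\pi$ etc.), $([\pi]\shuffle[\sigma])^{f}=[\hat{\pi}^{f}]\shuffle[\hat{\sigma}^{f}]$ and $([\pi']\shuffle[\sigma'])^{f}=[\hat{\pi}'^{f}]\shuffle[\hat{\sigma}'^{f}]$. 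Applying the bijection $f$ and using that $\cst_{1}\circ f$ is equivalent to $\cst_{2}$, we obtain $\cst_{2}([\pi]\shuffle[\sigma])=\cst_{2}([\pi']\shuffle[\sigma'])$. Hence $\cst_{2}$ is cyclic shuffle-compatible, and $\mathcal{A}_{\cst_{2}}^{\cyc}$ is well-defined.

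Now define $\Phi\colon\mathcal{A}_{\cst_{1}}^{\cyc}\to\mathcal{A}_{\cst_{2}}^{\cyc}$ by $[\pi]_{\cst_{1}}\mapsto([\pi]_{\cst_{2}})^{f}=[\pi^{f}]_{\cst_{2}}$ and extending by linearity. Since $\cst_{1}$ and $\cst_{2}$ are $f$-equivalent, $[\pi]_{\cst_{1}}=[\sigma]_{\cst_{1}}$ if and only if $[\pi^{f}]_{\cst_{2}}=[\sigma^{f}]_{\cst_{2}}$; as $f$ is an involution, $[\pi]\mapsto[\pi^{f}]$ is a bijection on cyclic permutations, so $\Phi$ is a well-defined linear isomorphism of vector spaces, and it is length-preserving, hence a graded map. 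Finally, for disjoint $\pi,\sigma$, pick $\hat{\pi},\hat{\sigma}$ as above; then $[\hat{\pi}]_{\cst_{1}}=[\pi]_{\cst_{1}}$ and $[\hat{\sigma}]_{\cst_{1}}=[\sigma]_{\cst_{1}}$, and
\begin{align*}
\Phi\bigl([\pi]_{\cst_{1}}[\sigma]_{\cst_{1}}\bigr)
&=\Phi\Bigl(\sum_{[\tau]\in[\hat{\pi}]\shuffle[\hat{\sigma}]}[\tau]_{\cst_{1}}\Bigr)
=\sum_{[\tau]\in[\hat{\pi}]\shuffle[\hat{\sigma}]}[\tau^{f}]_{\cst_{2}}
=\sum_{[\upsilon]\in([\hat{\pi}]\shuffle[\hat{\sigma}])^{f}}[\upsilon]_{\cst_{2}}\\
&=\sum_{[\upsilon]\in[\pi^{f}]\shuffle[\sigma^{f}]}[\upsilon]_{\cst_{2}}
=[\pi^{f}]_{\cst_{2}}[\sigma^{f}]_{\cst_{2}}
=\Phi\bigl([\pi]_{\cst_{1}}\bigr)\Phi\bigl([\sigma]_{\cst_{1}}\bigr).
\end{align*}
Therefore $\Phi$ is a $\mathbb{Q}$-algebra isomorphism between $\mathcal{A}_{\cst_{1}}^{\cyc}$ and $\mathcal{A}_{\cst_{2}}^{\cyc}$.
\end{proof}
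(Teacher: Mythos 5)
Your proof is correct and follows essentially the same route as the paper's: you invoke Lemma \ref{l-cscp} to produce the hatted permutations, transfer $\cst_{2}$-equivalence of $[\pi],[\pi']$ to $\cst_{1}$-equivalence of the $f$-images, apply cyclic shuffle-compatibility of $\cst_{1}$, pull back along $f$, and your isomorphism $\Phi$ is just the paper's map $\lambda$ written in the opposite direction. One small point of justification: the equalities $\cst_{1}[\hat{\pi}^{f}]=\cst_{1}[\pi^{f}]$ should be obtained by first noting $[\hat{\pi}]_{\cst_{2}}=[\pi]_{\cst_{2}}$ (same relative order) and then applying $f$-equivalence, as the paper does, rather than by asserting that $\hat{\pi}^{f}$ and $\pi^{f}$ have the same relative order, which is not guaranteed for an arbitrary length-preserving involution $f$.
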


\begin{proof}
Let $[\pi]$ and $[\tilde{\pi}]$ be cyclic permutations in the same $\cst_{2}$-equivalence class, and similarly with $[\sigma]$ and $[\tilde{\sigma}]$, such that $\pi$ and $\sigma$ are disjoint and $\tilde{\pi}$ and $\tilde{\sigma}$ are disjoint. We know from 
Lemma \ref{l-cscp} that there exist permutations $\hat{\pi}$, $\hat{\sigma}$, $\hat{\tilde{\pi}}$, and $\hat{\tilde{\sigma}}$\textemdash having the same relative order as $\pi$, $\sigma$, $\tilde{\pi}$, and $\tilde{\sigma}$, respectively\textemdash satisfying $([\pi]\shuffle[\sigma])^{f}=[\hat{\pi}^{f}]\shuffle[\hat{\sigma}^{f}]$, 
$([\hat{\pi}]\shuffle[\hat{\sigma}])^{f}
=[\pi^{f}]\shuffle[\sigma^{f}]$, $([\tilde{\pi}]\shuffle[\tilde{\sigma}])^{f}=[\hat{\tilde{\pi}}^{f}]\shuffle[\hat{\tilde{\sigma}}^{f}]$, and $([\hat{\tilde{\pi}}]\shuffle[\hat{\tilde{\sigma}}])^{f}=[\tilde{\pi}^{f}]\shuffle[\tilde{\sigma}^{f}]$.

Because $\hat{\pi}$ and $\hat{\tilde{\pi}}$ have the same relative order as $\pi$ and $\tilde{\pi}$, respectively, we have
\[
[\hat{\pi}]_{\cst_{2}}=[\pi]_{\cst_{2}}=[\tilde{\pi}]_{\cst_{2}}=[\hat{\tilde{\pi}}]_{\cst_{2}}.
\]
Then, because $\cst_{1}$ and $\cst_{2}$ are $f$-equivalent, we have 
\[
[\hat{\pi}^{f}]_{\cst_{1}}=([\hat{\pi}]_{\cst_{2}})^{f}=([\hat{\tilde{\pi}}]_{\cst_{2}})^{f}=[\hat{\tilde{\pi}}^{f}]_{\cst_{1}},
\]
so $[\hat{\pi}^{f}]$ and $[\hat{\tilde{\pi}}^{f}]$ are $\cst_{1}$-equivalent. The same reasoning shows that $[\hat{\sigma}^{f}]$ and $[\hat{\tilde{\sigma}}^{f}]$ are also $\cst_{1}$-equivalent.

By cyclic shuffle-compatibility of $\cst_{1}$, we have the multiset equality 
\[
\{\{\,\cst_{1}[\tau]:[\tau]\in[\hat{\pi}^{f}]\shuffle[\hat{\sigma}^{f}]\,\}\}=\{\{\,\cst_{1}[\tau]:[\tau]\in[\hat{\tilde{\pi}}^{f}]\shuffle[\hat{\tilde{\sigma}}^{f}]\,\}\},
\]
which---by $f$-equivalence of $\cst_{1}$ and $\cst_{2}$---is equivalent to
\[
\{\{\,\cst_{2}[\tau^{f}]:[\tau]\in[\hat{\pi}^{f}]\shuffle[\hat{\sigma}^{f}]\,\}\}=\{\{\,\cst_{2}[\tau^{f}]:[\tau]\in[\hat{\tilde{\pi}}^{f}]\shuffle[\hat{\tilde{\sigma}}^{f}]\,\}\},
\]
which is in turn equivalent to
\[
\{\{\,\cst_{2}[\tau]:[\tau]^{f}\in[\hat{\pi}^{f}]\shuffle[\hat{\sigma}^{f}]\,\}\}=\{\{\,\cst_{2}[\tau]:[\tau]^{f}\in[\hat{\tilde{\pi}}^{f}]\shuffle[\hat{\tilde{\sigma}}^{f}]\,\}\}
\]
because $f$ is rotation-preserving. Since $([\pi]\shuffle[\sigma])^{f}=[\hat{\pi}^{f}]\shuffle[\hat{\sigma}^{f}]$ and $([\tilde{\pi}]\shuffle[\tilde{\sigma}])^{f}=[\hat{\tilde{\pi}}^{f}]\shuffle[\hat{\tilde{\sigma}}^{f}]$, we have
\[
\{\{\,\cst_{2}[\tau]:[\tau]\in[\pi]\shuffle[\sigma]\,\}\}=\{\{\,\cst_{2}[\tau]:[\tau]\in[\tilde{\pi}]\shuffle[\tilde{\sigma}]\,\}\},
\]
which shows that $\cst_{2}$ is cyclic shuffle-compatible.

It remains to prove that $\mathcal{A}_{\cst_{2}}^{\cyc}$ is isomorphic to $\mathcal{A}_{\cst_{1}}^{\cyc}$. Define the linear map $\lambda\colon\mathcal{A}_{\cst_{2}}^{\cyc}\rightarrow\mathcal{A}_{\cst_{1}}^{\cyc}$
by $[\pi]_{\cst_{2}}\mapsto[\pi^{f}]_{\cst_{1}}$. Observe that 
\[
\sum_{[\tau]\in[\pi]\shuffle[\sigma]}[\tau]_{\cst_{2}}=\sum_{[\tau]\in[\hat{\pi}]\shuffle[\hat{\sigma}]}[\tau]_{\cst_{2}}
\]
because $\cst_{2}$ is cyclic shuffle-compatible, and thus we have
\begin{align*}
\lambda([\pi]_{\cst_{2}}[\sigma]_{\cst_{2}}) & =\lambda\Big(\sum_{[\tau]\in[\pi]\shuffle[\sigma]}[\tau]_{\cst_{2}}\Big)\\
 & =\lambda\Big(\sum_{[\tau]\in[\hat{\pi}]\shuffle[\hat{\sigma}]}[\tau]_{\cst_{2}}\Big)\\
 & =\sum_{[\tau]\in[\hat{\pi}]\shuffle[\hat{\sigma}]}[\tau^{f}]_{\cst_{1}}\\
 & =\sum_{[\tau]^{f}\in[\hat{\pi}]\shuffle[\hat{\sigma}]}[\tau]_{\cst_{1}}\\
 & =\sum_{[\tau]\in[\pi^{f}]\shuffle[\sigma^{f}]}[\tau]_{\cst_{1}}\\
 & =[\pi^{f}]_{\cst_{1}}[\sigma^{f}]_{\cst_{1}}\\
 & =\lambda([\pi]_{\cst_{2}})\lambda([\sigma]_{\cst_{2}}).
\end{align*}
Hence, $\lambda$ is a $\mathbb{Q}$-algebra isomorphism 
from $\mathcal{A}_{\cst_{2}}^{\cyc}$ to $\mathcal{A}_{\cst_{1}}^{\cyc}$.
\end{proof}

\begin{cor}
Suppose that the cyclic permutation statistics $\cst_{1}$ and $\cst_{2}$ are $r$-equivalent, $c$-equivalent, or $rc$-equivalent. If $\cst_{1}$ is cyclic shuffle-compatible, then $\cst_{2}$ is cyclic shuffle-compatible with cyclic shuffle algebra $\mathcal{A}_{\cst_{2}}^{\cyc}$ isomorphic to $\mathcal{A}_{\cst_{1}}^{\cyc}$.
\end{cor}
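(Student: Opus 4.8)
The plan is to deduce this corollary directly from Theorem \ref{t-cycsym}, which states that whenever $f$ is both shuffle-compatibility-preserving and rotation-preserving and $\cst_{1},\cst_{2}$ are $f$-equivalent, cyclic shuffle-compatibility of $\cst_{1}$ forces cyclic shuffle-compatibility of $\cst_{2}$ together with an isomorphism $\mathcal{A}_{\cst_{2}}^{\cyc}\cong\mathcal{A}_{\cst_{1}}^{\cyc}$. So the entire task reduces to checking that $f=r$, $f=c$, and $f=rc$ each satisfy the two hypotheses of that theorem.

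First I would invoke the fact, established by Gessel and Zhuang \cite{Gessel2018} and recalled in the discussion preceding Lemma \ref{l-cscp}, that reversal, complementation, and reverse-complementation are all shuffle-compatibility-preserving involutions on linear permutations. Second, I would cite Lemma \ref{l-rotp}, which says precisely that reversal, complementation, and reverse-complementation are all rotation-preserving. With both properties in hand for each of $r$, $c$, $rc$, Theorem \ref{t-cycsym} applies verbatim: taking $f$ to be whichever of these three involutions witnesses the hypothesized $f$-equivalence of $\cst_{1}$ and $\cst_{2}$, we conclude that $\cst_{2}$ is cyclic shuffle-compatible and that $[\pi]_{\cst_{2}}\mapsto[\pi^{f}]_{\cst_{1}}$ extends to a $\mathbb{Q}$-algebra isomorphism $\mathcal{A}_{\cst_{2}}^{\cyc}\to\mathcal{A}_{\cst_{1}}^{\cyc}$.

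Since the corollary is stated as an ``or'' over the three symmetries, I would simply handle the three cases uniformly by remarking that in each case the relevant $f$ meets both hypotheses, so no case analysis beyond this citation is needed. There is essentially no obstacle here: the only point requiring any care is to make sure the notion of $f$-equivalence used in the corollary is the cyclic one defined just before Theorem \ref{t-cycsym} (namely $[\pi^{f}]_{\cst_{1}}=([\pi]_{\cst_{2}})^{f}$), and that the input symmetries $r,c,rc$ genuinely induce well-defined involutions on cyclic permutations — but this last point is exactly the content of Lemma \ref{l-rotp}. Hence the proof is a one-line application of Theorem \ref{t-cycsym} combined with Lemma \ref{l-rotp} and the Gessel–Zhuang result.
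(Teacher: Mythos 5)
Your proposal is correct and matches the paper's intended argument exactly: the paper states this corollary without proof precisely because it is an immediate application of Theorem \ref{t-cycsym} once one knows that $r$, $c$, and $rc$ are shuffle-compatibility-preserving (Gessel--Zhuang) and rotation-preserving (Lemma \ref{l-rotp}). Nothing further is needed.
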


\subsection{Constructing cyclic shuffle algebras from linear ones}

The following theorem---one of the main results of this paper---allows us to construct cyclic shuffle algebras from shuffle algebras of shuffle-compatible (linear) permutation statistics.

\begin{thm} \label{t-AtoAcyc}
Let $\cst$ be a cyclic permutation statistic and let $\st$ be a shuffle-compatible \textup{(}linear\textup{)} permutation statistic. Given a cyclic permutation $[\pi]$, let 
\[
v_{[\pi]}=\sum_{\bar{\pi}\in[\pi]}\bar{\pi}_{\st}\in{\mathcal A}_{\st}.
\]
Suppose that $v_{[\pi]}=v_{[\sigma]}$ whenever $[\pi]$ and $[\sigma]$ are $\cst$-equivalent, and that $\{v_{[\pi]}\}$ \textup{(}ranging over all $\cst$-equivalence classes\textup{)} is linearly independent. Then $\cst$ is cyclic shuffle-compatible and the map $\psi_{\cst}\colon{\mathcal A}_{\cst}^{\cyc}\rightarrow{\mathcal A}_{\st}$ given by 
$$
\psi_{\cst}([\pi]_{\cst})=v_{[\pi]}
$$
extends linearly to a $\mathbb{Q}$-algebra isomorphism from ${\mathcal A}_{\cst}^{\cyc}$ to the span of $\{v_{[\pi]}\}$, a subalgebra of ${\mathcal A}_{\st}$.
\end{thm}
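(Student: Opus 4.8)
The plan is to verify directly that the map $\psi_{\cst}$ intertwines the (putative) multiplication on ${\mathcal A}_{\cst}^{\cyc}$ with the known multiplication on ${\mathcal A}_{\st}$, and to deduce well-definedness of that multiplication (equivalently, cyclic shuffle-compatibility of $\cst$) as a consequence. The central identity to establish is the following: for any disjoint $\pi$ and $\sigma$,
\[
v_{[\pi]}\, v_{[\sigma]}=\sum_{[\tau]\in[\pi]\shuffle[\sigma]}v_{[\tau]}
\]
as an identity in ${\mathcal A}_{\st}$, where the product on the left is taken in the (already established) linear shuffle algebra ${\mathcal A}_{\st}$.

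The key computation unpacks the left-hand side using the definition of $v_{[\pi]}$ and the multiplication in ${\mathcal A}_{\st}$:
\[
v_{[\pi]}\, v_{[\sigma]}=\Bigl(\sum_{\bar{\pi}\in[\pi]}\bar{\pi}_{\st}\Bigr)\Bigl(\sum_{\bar{\sigma}\in[\sigma]}\bar{\sigma}_{\st}\Bigr)=\sum_{\bar{\pi}\in[\pi]}\sum_{\bar{\sigma}\in[\sigma]}\ \sum_{\tau\in\bar{\pi}\shuffle\bar{\sigma}}\tau_{\st}.
\]
On the other side, $\sum_{[\tau]\in[\pi]\shuffle[\sigma]}v_{[\tau]}=\sum_{[\tau]\in[\pi]\shuffle[\sigma]}\sum_{\bar\tau\in[\tau]}\bar\tau_{\st}$. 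So the identity amounts to a purely combinatorial bijection-with-multiplicity claim: the multiset of linear permutations $\tau$ arising as a shuffle of some rotation $\bar\pi\in[\pi]$ with some rotation $\bar\sigma\in[\sigma]$ equals (with multiplicity) the multiset $\{\{\,\bar\tau : [\tau]\in[\pi]\shuffle[\sigma],\ \bar\tau\in[\tau]\,\}\}$. Each side reorganizes the same underlying set of ``pointed'' data, and the needed multiplicity bookkeeping is exactly the content already implicit in the definition of $[\pi]\shuffle[\sigma]$; I would state this as a short lemma (or cite the analogous reorganization from \cite{Detal:csc}) and then the displayed identity follows. I expect this multiset-reorganization step to be the main obstacle, since one must carefully track how many rotation-pairs $(\bar\pi,\bar\sigma)$ and shuffle-positions produce each linear representative $\bar\tau$ of each cyclic shuffle $[\tau]$, and confirm the counts match on both sides.

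Granting the displayed identity, the rest is formal. Define $\psi_{\cst}$ on basis elements by $[\pi]_{\cst}\mapsto v_{[\pi]}$; this is well-defined on ${\mathcal A}_{\cst}^{\cyc}$ as a vector space because $v_{[\pi]}=v_{[\sigma]}$ whenever $[\pi]$ and $[\sigma]$ are $\cst$-equivalent, and it is injective because $\{v_{[\pi]}\}$ is linearly independent; hence $\psi_{\cst}$ is a linear isomorphism from ${\mathcal A}_{\cst}^{\cyc}$ onto the span of $\{v_{[\pi]}\}$, which is visibly closed under multiplication by the displayed identity and thus a subalgebra of ${\mathcal A}_{\st}$. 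To see that the multiplication on ${\mathcal A}_{\cst}^{\cyc}$ is well-defined — equivalently, that $\cst$ is cyclic shuffle-compatible — suppose $[\pi]$ and $[\pi']$ are $\cst$-equivalent and likewise $[\sigma]$ and $[\sigma']$; then $v_{[\pi]}=v_{[\pi']}$ and $v_{[\sigma]}=v_{[\sigma']}$, so by the displayed identity
\[
\sum_{[\tau]\in[\pi]\shuffle[\sigma]}v_{[\tau]}=v_{[\pi]}v_{[\sigma]}=v_{[\pi']}v_{[\sigma']}=\sum_{[\tau]\in[\pi']\shuffle[\sigma']}v_{[\tau]},
\]
and applying $\psi_{\cst}^{-1}$ (linear independence of $\{v_{[\pi]}\}$) gives $\sum_{[\tau]\in[\pi]\shuffle[\sigma]}[\tau]_{\cst}=\sum_{[\tau]\in[\pi']\shuffle[\sigma']}[\tau]_{\cst}$, which is precisely cyclic shuffle-compatibility. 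Finally, $\psi_{\cst}$ is a ring homomorphism by construction: $\psi_{\cst}([\pi]_{\cst}[\sigma]_{\cst})=\psi_{\cst}\bigl(\sum_{[\tau]\in[\pi]\shuffle[\sigma]}[\tau]_{\cst}\bigr)=\sum_{[\tau]\in[\pi]\shuffle[\sigma]}v_{[\tau]}=v_{[\pi]}v_{[\sigma]}=\psi_{\cst}([\pi]_{\cst})\psi_{\cst}([\sigma]_{\cst})$, completing the proof.
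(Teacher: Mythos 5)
Your proposal is correct and follows essentially the same route as the paper: the paper's proof likewise reduces everything to the identity $\sum_{[\tau]\in[\pi]\shuffle[\sigma]}\sum_{\bar\tau\in[\tau]}\bar\tau_{\st}=\sum_{\bar\pi\in[\pi]}\sum_{\bar\sigma\in[\sigma]}\sum_{\bar\tau\in\bar\pi\shuffle\bar\sigma}\bar\tau_{\st}$ (asserted there in a single line of the computation of $\psi_{\cst}$ applied to $\sum_{[\tau]}[\tau]_{\cst}$), and then deduces well-definedness of the cyclic multiplication from injectivity of $\psi_{\cst}$ exactly as you do. The multiset-reorganization step you flag is handled correctly by your observation that disjointness forces each linear $\bar\tau$ to arise from a unique rotation pair $(\bar\pi,\bar\sigma)$, so there is no gap.
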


\begin{proof}
Since $v_{[\pi]}=v_{[\sigma]}$ whenever $[\pi]$ and $[\sigma]$ are $\cst$-equivalent, we know that $\psi_{\cst}$ is a well-defined linear map on ${\mathcal A}_{\cst}^{\cyc}$. (We do not yet know whether ${\mathcal A}_{\cst}^{\cyc}$ is an algebra; here we are only considering ${\mathcal A}_{\cst}^{\cyc}$ as a vector space.) Furthermore, because $\{v_{[\pi]}\}$ is linearly independent, the linear map $\psi_{\cst}$ is a vector space isomorphism from ${\mathcal A}_{\cst}^{\cyc}$ to a subspace of ${\mathcal A}_{\st}$. 

To show that $\cst$ is cyclic shuffle-compatible, we show that 
\[
[\pi]_{\cst}[\sigma]_{\cst}=\sum_{[\tau]\in [\pi]\shu[\sigma]}[\tau]_{\cst}
\]
is a well-defined multiplication in ${\mathcal A}_{\cst}^{\cyc}$. Let $[\pi^{\prime}],[\pi^{\prime\prime}]\in[\pi]{}_{\cst}$ and let $[\sigma^{\prime}],[\sigma^{\prime\prime}]\in[\sigma]_{\cst}$, where $\pi^{\prime}$ and $\sigma^{\prime}$ are disjoint and so are $\pi^{\prime\prime}$ and $\sigma^{\prime\prime}$. Then 
\begin{align*}
\psi_{\cst}\Bigg(\sum_{[\tau]\in[\pi^{\prime}]\shu[\sigma^{\prime}]}[\tau]_{\cst}\Bigg) & =\sum_{[\tau]\in[\pi^{\prime}]\shuffle[\sigma^{\prime}]}v_{[\tau]}\\
 & =\sum_{[\tau]\in[\pi^{\prime}]\shu[\sigma^{\prime}]}\sum_{\bar{\tau}\in[\tau]}\bar{\tau}_{\st}\\
 & =\sum_{\bar{\pi}\in[\pi^{\prime}]}\sum_{\bar{\sigma}\in[\sigma^{\prime}]}\sum_{\bar{\tau}\in\bar{\pi}\shu\bar{\sigma}}\bar{\tau}_{\st}\\
 & =v_{[\pi^{\prime}]}v_{[\sigma^{\prime}]}
\end{align*}
and similarly 
\[
\psi_{\cst}\Bigg(\sum_{[\tau]\in [\pi^{\prime\prime}]\shu[\sigma^{\prime\prime}]}[\tau]_{\cst}\Bigg)=v_{[\pi^{\prime\prime}]}v_{[\sigma^{\prime\prime}]}.
\]
Since $[\pi^\prime]$ and $[\pi^{\prime\prime}]$ are $\cst$-equivalent and similarly with $[\sigma^\prime]$ and $[\sigma^{\prime\prime}]$, we have 
\[
\psi_{\cst}\Bigg(\sum_{[\tau]\in [\pi^{\prime}]\shu[\sigma^{\prime}]}[\tau]_{\cst}\Bigg)=v_{[\pi^\prime]}v_{[\sigma^\prime]}=v_{[\pi^{\prime\prime}]}v_{[\sigma^{\prime\prime}]}=\psi_{\cst}\Bigg(\sum_{[\tau]\in [\pi^{\prime\prime}]\shu[\sigma^{\prime\prime}]}[\tau]_{\cst}\Bigg)
\]
and thus 
\[
\sum_{[\tau]\in [\pi^{\prime}]\shu[\sigma^{\prime}]}[\tau]_{\cst}=\sum_{[\tau]\in [\pi^{\prime\prime}]\shu[\sigma^{\prime\prime}]}[\tau]_{\cst}
\]
due to injectivity of $\psi_{\cst}$. We have shown that the multiplication of the cyclic shuffle algebra ${\mathcal A}_{\cst}^{\cyc}$ is well-defined, and therefore $\cst$ is shuffle-compatible. 

Finally, we have
\allowdisplaybreaks
\begin{align*}
\psi_{\cst}([\pi]_{\cst}[\sigma]_{\cst}) & =\psi_{\cst}\Bigg(\sum_{[\tau]\in [\pi]\shu[\sigma]}[\tau]_{\cst}\Bigg)\\
 & =v_{[\pi]}v_{[\sigma]}\\
 & =\psi_{\cst}([\pi]_{\cst})\psi_{\cst}([\sigma]_{\cst}),
\end{align*}
so $\psi_{\cst}$ is a $\mathbb{Q}$-algebra isomorphism from ${\mathcal A}_{\cst}^{\cyc}$ to the span of $\{v_{[\pi]}\}$.
\end{proof}

\section{Shuffle-compatibility and quasisymmetric functions} \label{s-scqsym}

The focus of this section is the relationship between cyclic shuffle-compatibility and cyclic quasisymmetric functions. We shall begin by providing the necessary background on descent compositions, cyclic descent compositions, and (ordinary) quasisymmetric functions.

\subsection{Descent compositions}

Every permutation can be uniquely decomposed into a sequence of maximal increasing consecutive subsequences, which we call \textit{increasing runs}. For example, the increasing runs
of $4783291$ are $478$, $3$, $29$, and $1$. Equivalently, an increasing run of $\pi$ is a maximal consecutive subsequence with no descents.

The number of increasing runs of a nonempty permutation is one more than its number of descents; in fact, the lengths of the increasing runs determine the descents, and vice versa. Given a subset $S\subseteq[n-1]$ with elements $s_{1}<s_{2}<\cdots<s_{j}$, let $\Comp S$ be the composition of $n$ defined by
$$\Comp S \coloneqq (s_{1},s_{2}-s_{1},\dots,s_{j}-s_{j-1},n-s_{j});$$
also, given a composition  $L=(L_{1},L_{2},\dots,L_{k})$, let 
$$\Des L \coloneqq\{L_{1},L_{1}+L_{2},\dots,L_{1}+\cdots+L_{k-1}\}$$ 
be the corresponding subset of $[n-1]$.
It is straightforward to verify that $\Comp$ and $\Des$ are inverse bijections. If $\pi \in \mathfrak{P}_n$ has descent set $S\subseteq[n-1]$, then we say that $\Comp S$ is the \textit{descent composition} of $\pi$, which we also denote by $\Comp \pi$. By convention, the empty permutation has descent composition $\varnothing$. 

Continuing the example above, we have $\Comp 4783291 = (3,1,2,1)$. Observe that the descent composition of $\pi$ gives the lengths of the increasing runs of $\pi$ in the order that they appear. Conversely, if $\pi$ has descent composition $L$, then its descent set $\Des \pi$ is $\Des L$.

We call a permutation statistic $\st$ a \textit{descent statistic} if it depends only on the descent composition, that is, if $\Comp \pi=\Comp \sigma$ implies $\st \pi=\st \sigma$. Equivalently, a descent statistic depends only on the descent set and length. If $\st$ is a descent statistic, then we can extend the notion of $\st$-equivalence classes of permutations to that of compositions. First, let $\st L$ indicate the value of $\st$ on any permutation with descent composition $L$. Then we say that two compositions $L$ and $K$ of the same size---where the {\em size} of a composition is the sum of its parts---are $\st$\textit{-equivalent} if $\st L$ = $\st K$. For example, the compositions $(2,3,1)$ and $(1,1,4)$ are $\des$-equivalent because any permutation with one of these descent compositions has exactly two descents.

\subsection{Cyclic descent compositions}

The notion of descent compositions for linear permutations can be extended to cyclic permutations. To do so, we shall need a few more preliminary definitions. A \textit{cyclic shift} of a composition $L=(L_1,L_2,\dots,L_k)$ is a composition of the form 
$$(L_j, L_{j+1}, \dots, L_k, L_1, \dots, L_{j-1}).$$
A \textit{cyclic composition} of $n$ is then the equivalence class of a composition of $n$ under cyclic shift. For example, 
$$
[2,1,3] = \{ (2,1,3), (1,3,2), (3,2,1) \}
$$
and 
$$
[1,2,1,2] = \{ (1,2,1,2), (2,1,2,1) \}
$$
are both cyclic compositions. By convention, we'll also allow the empty set $\varnothing$ to be a cyclic composition.

Let us call $S$ a \textit{non-Escher}\footnote{We borrow the term ``non-Escher'' from \cite{Adin2021} and other recent works on cyclic descent extensions. As explained there, this term is a reference to M.\ C.\ Escher's painting ``Ascending and Descending''.} subset of $[n]$ if $S$ is the cyclic descent set of some linear permutation of length $n$. When $n=0$ or $n=1$, only the empty set is non-Escher, and when $n\geq 2$, all subsets of $[n]$ are non-Escher except for the empty set and $[n]$ itself. We associate to each non-Escher subset $S\subseteq [n]$ a composition $\cComp S$ defined by 
\[
\cComp S\coloneqq
\begin{cases}
(s_{2}-s_{1},\dots,s_{j}-s_{j-1},n-s_{j}+s_{1}), & \text{if }n\geq2,\\
(1), & \text{if }n=1,\\
\emptyset, & \text{if }n=0.
\end{cases}
\]
It is easy to see that if $S^\prime$ is a cyclic shift of $S$, then $\cComp S^\prime$ is a cyclic shift of $\cComp S$. So, if $[S]$ is the equivalence class of $S$ under cyclic shift, then we can let $\cComp[S]$ be the cyclic composition defined by
$$\cComp[S] \coloneqq [\cComp S].$$
We say that a cyclic composition is \textit{non-Escher} if it is an image of this induced map $\cComp$, and one can check that $\cComp$ is a bijection from equivalence classes of non-Escher subsets of $[n]$ under cyclic shift to non-Escher cyclic compositions of $n$. If $S$ is the cyclic descent set of a linear permutation $\pi$, then we call $\cComp[S]$ the \textit{cyclic descent composition} of the cyclic permutation $[\pi]$. We denote the cyclic descent composition of $[\pi]$ simply as $\cComp [\pi]$.

For example, take $\pi = 179624$. Then $\pi$ has cyclic descent set $S = \{3,4,6\}$, so the cyclic descent composition of $[\pi]$ is $\cComp[S] = [1,2,3]$, which we also denote by $\cComp[\pi]$.

A cyclic permutation statistic $\cst$ is called a \textit{cyclic descent statistic} if it depends only on the cyclic descent composition---that is, if $\cComp [\pi] = \cComp [\sigma]$ implies $\cst [\pi] = \cst [\sigma]$. (This is equivalent to the definition given in Section \ref{ss-2rcsa}.) Similar to the notation $\st L$, we can write $\cst [L]$ for the value of $\cst$ on any cyclic permutation with cyclic descent composition $[L]$, and we shall say that two cyclic compositions $[L]$ and $[K]$ of the same size---which means that $L$ and $K$ have the same size---are $\cst$\textit{-equivalent} if $\cst [L] = \cst [K]$.

\subsection{Quasisymmetric functions}

A formal power series $f\in\mathbb{Q}[[x_{1},x_{2},\dots]]$ of bounded degree in countably many commuting variables $x_{1},x_{2},\dots$ is called a \textit{quasisymmetric function} if for any positive integers $a_{1},a_{2},\dots,a_{k}$, $i_{1}<i_{2}<\cdots<i_{k}$, and $j_{1}<j_{2}<\cdots<j_{k}$, we have equality of the monomial coefficients
\[
[x_{i_{1}}^{a_{1}}x_{i_{2}}^{a_{2}}\cdots x_{i_{k}}^{a_{k}}]\,f=[x_{j_{1}}^{a_{1}}x_{j_{2}}^{a_{2}}\cdots x_{j_{k}}^{a_{k}}]\,f.
\]

The $\mathbb{Q}$-vector space $\QSym_{n}$ of quasisymmetric functions homogeneous of degree $n$ has dimension $2^{n-1}$, the number of compositions of $n$. An important basis of $\QSym_{n}$ is the basis of \textit{fundamental quasisymmetric functions} $\{F_{n,L}\}_{L\vDash n}$ defined by
\[
F_{n,L}\coloneqq\sum_{\substack{i_{1}\leq i_{2}\leq\cdots\leq i_{n}\\
i_{j}<i_{j+1}\,\mathrm{if}\,j\in\Des L
}
}x_{i_{1}}x_{i_{2}}\cdots x_{i_{n}}.
\]
Sometimes, it is more convenient to index fundamental quasisymmetric functions by subsets of $[n-1]$ as opposed to compositions of $n$, in which case we'll use the notation 
$$F_{n,S} \coloneqq F_{n,\Comp S}.$$

The product of two quasisymmetric functions is again quasisymmetric. The multiplication rule for the fundamental basis is given by the following theorem, which can be proved using $P$-partitions; see \cite[Exercise 7.93]{Stanley2001}.
\begin{thm}
\label{t-fqsym} 
Let $m$ and $n$ be non-negative integers, and let $A\subseteq[m-1]$ and $B\subseteq[n-1]$. Then
\begin{equation*}
F_{m,A}F_{n,B}=\sum_{\tau\in\pi\shuffle\sigma}F_{m+n,\Des\tau}
\end{equation*}
where $\pi$ is any permutation of length $m$ with descent set $A$ and $\sigma$ is any permutation \textup{(}disjoint from $\pi$\textup{)} of length $n$ with descent set $B$.
\end{thm}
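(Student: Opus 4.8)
The plan is to prove this via Stanley's theory of $P$-partitions, as the reference to \cite[Exercise 7.93]{Stanley2001} suggests. The first step is to reinterpret both sides as generating functions for families of $\mathbb{P}$-valued maps. For a permutation $\rho$ of length $k$ with $\Des\rho = D$, the fundamental quasisymmetric function $F_{k,D}$ equals $\sum_w x_{w_1}\cdots x_{w_k}$, where $w$ ranges over all weakly increasing sequences $w_1\le\cdots\le w_k$ in $\mathbb{P}$ with $w_i < w_{i+1}$ whenever $i\in D$; equivalently, regarding the index $i$ as labeled by $\rho_i$, this is the generating function for $(P,\omega)$-partitions where $P$ is a $k$-element chain and $\omega$ is the labeling by $\rho$.

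The second step is to choose the right poset. Given disjoint $\pi\in\mathfrak{P}_m$ with $\Des\pi = A$ and $\sigma\in\mathfrak{P}_n$ with $\Des\sigma = B$, let $P$ be the disjoint union of a chain $c_1<\cdots<c_m$ and a chain $d_1<\cdots<d_n$, with labeling $\omega(c_i)=\pi_i$ and $\omega(d_j)=\sigma_j$; this is a valid (injective) labeling precisely because $\pi$ and $\sigma$ are disjoint. Since $P$ is a disjoint union of two chains with no relations between them, a $(P,\omega)$-partition is exactly an independent pair consisting of a chain-partition for $\pi$ and one for $\sigma$, so its generating function factors as $F_{m,A}F_{n,B}$, the left-hand side.

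The third step is to apply the fundamental lemma of $P$-partitions: the set of $(P,\omega)$-partitions is the disjoint union, over the linear extensions $L$ of $(P,\omega)$, of the sets of $(L,\omega)$-partitions, so the same generating function also equals $\sum_L F_{m+n,\Des_\omega L}$. It then remains to observe that a linear extension of the disjoint union of the two chains is exactly an interleaving of $c_1<\cdots<c_m$ with $d_1<\cdots<d_n$, which under $\omega$ becomes a shuffle of the sequences $\pi$ and $\sigma$; because $\pi$ and $\sigma$ share no letters, distinct interleavings yield distinct shuffles, so the linear extensions correspond bijectively to $\tau\in\pi\shuffle\sigma$, with $\Des_\omega L = \Des\tau$. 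Combining the two expressions for the generating function of $(P,\omega)$-partitions gives the claimed identity.

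The only genuinely nontrivial ingredient is the fundamental lemma of $P$-partitions, so for a self-contained argument I would instead exhibit the underlying bijection directly. Given a compatible pair $(f,g)$ of weakly increasing sequences for $\pi$ and $\sigma$, build $\tau$ by listing all the letters of $\pi$ and $\sigma$ in weakly increasing order of their assigned value, breaking ties by putting smaller letters first; one checks that the $\pi$-letters keep their order from $\pi$ and the $\sigma$-letters keep theirs from $\sigma$ (equal values within one permutation cannot straddle a descent, so tied letters of $\pi$, or of $\sigma$, are already increasing), so that $\tau\in\pi\shuffle\sigma$, and that the values read along $\tau$ strictly increase across each descent of $\tau$. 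Conversely, a valid sequence for a given $\tau$ restricts to valid sequences for $\pi$ and $\sigma$ — the subtle point being that a maximal block of equal values along $\tau$ forces an increasing run of $\tau$, hence strictness across the descents of $\pi$ after restriction. The bulk of the work is verifying that these two maps are mutually inverse; in particular, that for each pair $(f,g)$ there is a \emph{unique} shuffle $\tau$ for which the induced sequence is valid, which one sees by noting that the relative order in any such $\tau$ of a $\pi$-letter and a $\sigma$-letter is forced by their values (and, in the case of a tie, by the letters themselves). I expect this uniqueness check to be the main obstacle.
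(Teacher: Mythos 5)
Your proposal is correct and is precisely the standard $P$-partition argument that the paper invokes by citing Stanley's Exercise 7.93: realize both sides as generating functions for $(P,\omega)$-partitions on a disjoint union of two chains labeled by $\pi$ and $\sigma$, and apply the fundamental lemma (or, as in your self-contained version, the underlying sort-by-value, break-ties-by-smaller-letter bijection, whose uniqueness step you have identified and justified correctly). The paper gives no independent proof, so there is nothing further to compare.
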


If $f\in\QSym_{m}$ and $g\in\QSym_{n}$, then $fg\in\QSym_{m+n}$. Therefore $\QSym\coloneqq\bigoplus_{n=0}^{\infty}\QSym_{n}$ is a graded $\mathbb{Q}$-algebra called the \textit{algebra of quasisymmetric functions} (with coefficients in $\mathbb{Q}$), a subalgebra of $\mathbb{Q}[[x_{1},x_{2},\dots]]$. Motivated by Stanley's theory of $P$-partitions, Gessel introduced quasisymmetric functions in \cite{Gessel1984} and developed the basic algebraic properties of $\QSym$. Further properties of $\QSym$ and its connections with many topics of study in combinatorics and algebra were developed in the subsequent decades; see \cite[Section 5]{Grinberg2020},
\cite{Luoto2013},
\cite[Chapter 8]{sag:aoc}, and \cite[Section 7.19]{Stanley2011}  for several basic references.

From Theorem \ref{t-fqsym}, we see that the descent set shuffle algebra ${\mathcal A}_{\Des}$ is isomorphic to $\QSym$; this is Corollary 4.2 of \cite{Gessel2018}.

\subsection{Cyclic quasisymmetric functions and the cyclic shuffle algebra of \texorpdfstring{$\cDes$}{cDes}} 

We are now ready to discuss cyclic quasisymmetric functions and their role in cyclic shuffle-compatibility.

Given a subset $S$ of $[n]$ where $n\geq1$, let 
\[
F_{n,S}^{\cyc}\coloneqq\sum_{i\in[n]}F_{n,(S+i)\cap[n-1]},
\]
and let $F_{0,\emptyset}^{\cyc}\coloneqq1$; these are the \textit{fundamental cyclic quasisymmetric functions} introduced by Adin, Gessel, Reiner, and Roichman \cite{Adin2021}. It is clear from this definition that the $F_{n,S}^{\cyc}$ are invariant under cyclic shift; in other words, if $S^{\prime}=S+i$ for some integer $i$, then $F_{n,S}^{\cyc}=F_{n,S^{\prime}}^{\cyc}$.
As such, if $[S]$ is the equivalence class of the set $S$ under cyclic shift, then it makes sense to define 
$$F_{n,[S]}^{\cyc}\coloneqq F_{n,S}^{\cyc}.$$ 
We can also index fundamental cyclic quasisymmetric functions using compositions; for a composition $L$ of $n$, let
$$F_{L}^{\cyc}\coloneqq F_{n,\cDes L}^{\cyc} \quad\text{and}\quad F_{[L]}^{\cyc}\coloneqq F_{L}^{\cyc}.$$
Note that $n$ is not needed in the subscript when using $L$ or $[L]$ since it is determined from the sum of the parts of $L$.

Let $\cQSym^{-}$ denote the span of $\{F_{n,[S]}^{\cyc}\}$ over all $n\geq0$ and all equivalence classes $[S]$ of non-Escher subsets $S\subseteq[n]$. The following theorem, proven by Adin et al.~\cite[Theorem 3.22]{Adin2021}, gives a multiplication rule for the fundamental cyclic quasisymmetric functions in $\cQSym^{-}$, which also implies that the cyclic descent set $\cDes$ is cyclic shuffle-compatible and has cyclic shuffle algebra isomorphic to $\cQSym^{-}$.

\begin{thm}
\label{t-fcycmult} 
Let $m$ and $n$ be non-negative integers, and let $A\subseteq[m]$ and $B\subseteq[n]$ be non-Escher subsets. Then
\begin{equation}
F_{m,[A]}^{\cyc}F_{n,[B]}^{\cyc}=\sum_{[\tau]\in[\pi]\shuffle[\sigma]}F_{m+n,\cDes[\tau]}^{\cyc}\label{e-fcycmult}
\end{equation}
where $[\pi]$ is any cyclic permutation of length $m$ with cyclic descent set $[A]$ and $[\sigma]$ is any cyclic permutation \textup{(}with $\sigma$ disjoint from $\pi$\textup{)} of length $n$ with cyclic descent set $[B]$.
\end{thm}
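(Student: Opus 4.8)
The plan is to reduce identity \eqref{e-fcycmult} to the ordinary shuffle product of fundamental quasisymmetric functions, Theorem \ref{t-fqsym}. The bridge is the identity
\[
F_{m,[A]}^{\cyc}=\sum_{\bar{\pi}\in[\pi]}F_{m,\Des\bar{\pi}},
\]
valid whenever $[\pi]$ is a cyclic permutation of length $m$ with cyclic descent set $[A]$. This is immediate from the definition $F_{m,A}^{\cyc}=\sum_{i\in[m]}F_{m,(A+i)\cap[m-1]}$ once one observes that the $m$ (distinct) linear representatives of $[\pi]$ are the rotations $\bar{\pi}^{(i)}$ of $\pi$, whose ordinary descent sets are $\Des\bar{\pi}^{(i)}=\cDes\bar{\pi}^{(i)}\cap[m-1]=(\cDes\pi+i)\cap[m-1]=(A+i)\cap[m-1]$; the cases $m\le1$ are degenerate and handled by the same bookkeeping. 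Equivalently, this identity says that $F_{m,[A]}^{\cyc}$ is the image, under the isomorphism $\mathcal{A}_{\Des}\cong\QSym$, of the element $v_{[\pi]}=\sum_{\bar{\pi}\in[\pi]}\bar{\pi}_{\Des}$ from Theorem \ref{t-AtoAcyc} taken with $\st=\Des$.

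Assuming the bridge identity, I would expand the left side of \eqref{e-fcycmult} bilinearly and apply Theorem \ref{t-fqsym} termwise:
\[
F_{m,[A]}^{\cyc}F_{n,[B]}^{\cyc}=\sum_{\bar{\pi}\in[\pi]}\sum_{\bar{\sigma}\in[\sigma]}F_{m,\Des\bar{\pi}}F_{n,\Des\bar{\sigma}}=\sum_{\bar{\pi}\in[\pi]}\sum_{\bar{\sigma}\in[\sigma]}\sum_{\tau\in\bar{\pi}\shuffle\bar{\sigma}}F_{m+n,\Des\tau},
\]
which is legitimate because $\pi$ and $\sigma$ disjoint forces every $\bar{\pi}\in[\pi]$ and $\bar{\sigma}\in[\sigma]$ to be disjoint (rotation does not alter the underlying letters). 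The right side of \eqref{e-fcycmult}, by the bridge identity applied to each $[\tau]$, equals $\sum_{[\tau]\in[\pi]\shuffle[\sigma]}\sum_{\bar{\tau}\in[\tau]}F_{m+n,\Des\bar{\tau}}$. So it remains to identify the triple sum just obtained with this double sum, and I expect this to be the crux of the argument. The mechanism is a bijection between the triples $(\bar{\pi},\bar{\sigma},\tau)$ indexing the former and the pairs $([\tau],\bar{\tau})$ indexing the latter, given by $(\bar{\pi},\bar{\sigma},\tau)\mapsto([\tau],\tau)$. One direction is routine: from $\tau$ one recovers $\bar{\pi}$ and $\bar{\sigma}$ as the complementary subsequences of $\tau$ supported on the letters of $\pi$ and of $\sigma$ respectively, and $[\tau]\in[\pi]\shuffle[\sigma]$ by the definition of cyclic shuffle. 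The delicate direction is the converse: one must verify that \emph{every} linear representative $\bar{\tau}$ of a cyclic shuffle $[\tau]\in[\pi]\shuffle[\sigma]$ is itself a linear shuffle of an element of $[\pi]$ with an element of $[\sigma]$, not merely the particular representative witnessing that $[\tau]$ lies in $[\pi]\shuffle[\sigma]$. This holds because reading the $\pi$-letters of the circular word $[\tau]$ in cyclic order always produces a rotation of $\pi$ no matter where the cycle is cut (and likewise for $\sigma$), so the two complementary subsequences of $\bar{\tau}$ lie in $[\pi]$ and $[\sigma]$; making this precise is the one genuinely combinatorial point. With the bijection established, the two sums coincide term by term, which proves \eqref{e-fcycmult}.

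A shorter route, more in the spirit of this paper's framework, is to invoke Theorem \ref{t-AtoAcyc} directly with $\st=\Des$ and $\cst=\cDes$: well-definedness of $v_{[\pi]}$ is clear since $\cDes[\pi]$ determines the multiset $\{\{\Des\bar{\pi}:\bar{\pi}\in[\pi]\}\}$, and linear independence of $\{v_{[\pi]}\}=\{F_{n,[S]}^{\cyc}\}$ over non-Escher classes $[S]$ is established in \cite{Adin2021}; then \eqref{e-fcycmult} is precisely the statement that the resulting isomorphism $\psi_{\cDes}$ carries $[\pi]_{\cDes}[\sigma]_{\cDes}=\sum_{[\tau]\in[\pi]\shuffle[\sigma]}[\tau]_{\cDes}$ to a product of fundamental cyclic quasisymmetric functions. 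This is more economical but rests on the linear-independence fact, whereas the direct argument above needs only Theorem \ref{t-fqsym} and the bridge identity.
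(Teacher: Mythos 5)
Your proposal is correct, and it actually contains two proofs: the ``shorter route'' you sketch at the end is essentially the paper's own proof (the paper applies Theorem \ref{t-AtoAcyc} with $\st=\Des$, computes $\phi_{\Des}(v_{[\pi]})=F^{\cyc}_{n,\cDes[\pi]}$, cites Adin et al.\ for linear independence of the $F^{\cyc}_{n,[S]}$, and reads off \eqref{e-fcycmult} from the resulting isomorphism $\mathcal{A}^{\cyc}_{\cDes}\cong\cQSym^-$). Your main route is genuinely different and, for the identity as stated, more economical: since \eqref{e-fcycmult} is just an equation in $\QSym$, you can prove it directly from the bridge identity $F^{\cyc}_{m,[A]}=\sum_{\bar\pi\in[\pi]}F_{m,\Des\bar\pi}$ and Theorem \ref{t-fqsym}, with the only real work being the index-set identification
\[
\bigsqcup_{\bar\pi\in[\pi]}\bigsqcup_{\bar\sigma\in[\sigma]}\bigl(\bar\pi\shuffle\bar\sigma\bigr)\;=\;\bigsqcup_{[\tau]\in[\pi]\shuffle[\sigma]}[\tau].
\]
You correctly isolate the delicate inclusion (every linear representative of a cyclic shuffle is itself a linear shuffle of rotations, because cutting the circular word anywhere linearizes the $\pi$-letters into a rotation of $\pi$), and injectivity is clean since $\bar\pi$ and $\bar\sigma$ are recovered as the complementary subsequences of $\bar\tau$. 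Notably, this same three-fold-sum rearrangement appears inside the paper's proof of Theorem \ref{t-AtoAcyc} (where it is asserted with little justification), so your argument in effect unpacks and justifies that step while dispensing with the linear-independence hypothesis entirely. The trade-off: the paper's route simultaneously yields cyclic shuffle-compatibility of $\cDes$ and the isomorphism with $\cQSym^-$, whereas your direct route proves only the product formula (shuffle-compatibility would then still require linear independence to convert the $F^{\cyc}$-identity into a multiset equality). The only nitpick is the treatment of $m=0$, where $F^{\cyc}_{0,\emptyset}\coloneqq 1$ is defined separately rather than by the sum formula; the bridge identity still holds there since $[\varnothing]$ has a single representative, but this deserves an explicit sentence rather than ``the same bookkeeping.''
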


Adin et al.\ proved Theorem \ref{t-fcycmult} using toric $[\vec{D}]$-partitions; we now supply an alternative proof using Theorem \ref{t-AtoAcyc}.

\begin{proof}
We know that the descent set ${\mathcal \Des}$ is shuffle-compatible and its shuffle algebra ${\mathcal A}_{\Des}$ is isomorphic to the algebra of quasisymmetric functions, $\QSym$, through the isomorphism $\phi_{\Des}(\pi{}_{\Des})=F_{\left|\pi\right|,\Des(\pi)}$.
Then, using the notation of Theorem \ref{t-AtoAcyc}, we have
\[
\phi_{\Des}(v_{[\pi]})=\phi_{\Des}\Big(\sum_{\bar{\pi}\in[\pi]}\bar{\pi}_{\Des}\Big)=\sum_{i\in[n]}F_{n,(\cDes\pi+i)\cap[n-1]}=F_{n,\cDes[\pi]}^{\cyc}
\]
where $n=\left|\pi\right|$. If $[\pi]$ and $[\sigma]$ are $\cDes$-equivalent, then both $\phi_{\Des}(v_{[\pi]})$ and $\phi_{\Des}(v_{[\sigma]})$ are equal to $F_{n,[S]}^{\cyc}$ where $n=\left|\pi\right|=\left|\sigma\right|$ and $[S]=\cDes[\pi]=\cDes[\sigma]$, so $v_{[\pi]}=v_{[\sigma]}$. The linear independence of the $F_{n,[S]}^{\cyc}$ can be established by showing that the monomial cyclic quasisymmetric functions are linearly independent and expressing each $F_{n,[S]}^{\cyc}$ in terms of monomial cyclic quasisymmetric functions; see \cite[Section 2]{Adin2021} for details. Theorem~\ref{t-AtoAcyc} implies that $\cDes$ is cyclic shuffle-compatible and that $\mathcal{A}_{\cDes}^{\cyc}$ is isomorphic to $\cQSym^{-}$ via the isomorphism $[\pi]_{\cDes}\mapsto F_{\left|\pi\right|,\cDes[\pi]}^{\cyc}$, from which the multiplication rule (\ref{e-fcycmult}) follows.
\end{proof}

As a direct consequence of Theorem \ref{t-fcycmult}, we have that $\cQSym^{-}$ is a graded $\mathbb{Q}$-subalgebra of $\QSym$. Adin et al.\ also show that the span of 
\[
\{F_{0,\emptyset}^{\cyc},F_{1,\emptyset}^{\cyc},F_{1,\{1\}}^{\cyc}\}\cup\{F_{n,[S]}^{\cyc}\}_{n\geq2,\,\emptyset\neq S\subseteq[n]},
\]
denoted $\cQSym$, is a graded $\mathbb{Q}$-subalgebra of $\QSym$, although this result is less relevant to cyclic shuffle-compatibility. Thus we have the subalgebra relations 
\[
\cQSym^{-}\subseteq\cQSym\subseteq\QSym,
\]
and $\cQSym^{-}$ is called the \textit{non-Escher subalgebra} of
$\cQSym$.

Before moving on, let us explicitly state the cyclic shuffle-compatibility of $\cDes$ as a corollary of the preceding theorem.

\begin{cor}[Cyclic shuffle-compatibility of $\cDes$] \label{c-cDes}
The cyclic descent set $\cDes$ is cyclic shuffle-compatible, and the linear map on ${\mathcal A}_{\cDes}^{\cyc}$ defined by $[\pi]_{\cDes}\mapsto F_{|\pi|,\cDes[\pi]}^{\cyc}$ is a $\mathbb{Q}$-algebra isomorphism from ${\mathcal A}_{\cDes}^{\cyc}$ to $\cQSym^{-}$.
\end{cor}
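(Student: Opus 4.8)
The plan is to observe that the corollary is essentially a restatement of what was already established in the proof of Theorem~\ref{t-fcycmult}, so almost nothing new is needed. Concretely, in that proof we applied Theorem~\ref{t-AtoAcyc} with $\st=\Des$ and $\cst=\cDes$: using the isomorphism $\phi_{\Des}\colon{\mathcal A}_{\Des}\to\QSym$ given by $\pi_{\Des}\mapsto F_{|\pi|,\Des\pi}$, we computed $\phi_{\Des}(v_{[\pi]})=F_{n,\cDes[\pi]}^{\cyc}$ for $n=|\pi|$, which depends only on the multiset $\cDes[\pi]$; hence $v_{[\pi]}=v_{[\sigma]}$ whenever $[\pi]$ and $[\sigma]$ are $\cDes$-equivalent. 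Linear independence of $\{v_{[\pi]}\}$ follows from the linear independence of the fundamental cyclic quasisymmetric functions $\{F_{n,[S]}^{\cyc}\}$ (over non-Escher $[S]$), transported back through the isomorphism $\phi_{\Des}$. Thus both hypotheses of Theorem~\ref{t-AtoAcyc} hold, and that theorem immediately yields that $\cDes$ is cyclic shuffle-compatible with $\psi_{\cDes}\colon{\mathcal A}_{\cDes}^{\cyc}\to\operatorname{span}\{v_{[\pi]}\}\subseteq{\mathcal A}_{\Des}$ a $\mathbb{Q}$-algebra isomorphism.

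It then remains only to identify $\operatorname{span}\{v_{[\pi]}\}$, under $\phi_{\Des}$, with $\cQSym^{-}$. Since $\phi_{\Des}(v_{[\pi]})=F_{|\pi|,\cDes[\pi]}^{\cyc}$ and every non-Escher class $[S]\subseteq[n]$ arises as $\cDes[\pi]$ for some linear permutation $\pi$ of length $n$ (and $F_{0,\emptyset}^{\cyc}=1=\phi_{\Des}(v_{[\varnothing]})$), the image $\phi_{\Des}(\operatorname{span}\{v_{[\pi]}\})$ is exactly the span of $\{F_{n,[S]}^{\cyc}\}$ over all $n\geq0$ and all non-Escher $[S]$, which is $\cQSym^{-}$ by definition. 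Composing $\psi_{\cDes}$ with the restriction of $\phi_{\Des}$ to $\operatorname{span}\{v_{[\pi]}\}$ therefore gives a $\mathbb{Q}$-algebra isomorphism ${\mathcal A}_{\cDes}^{\cyc}\to\cQSym^{-}$ sending $[\pi]_{\cDes}\mapsto F_{|\pi|,\cDes[\pi]}^{\cyc}$, as claimed. There is no real obstacle here: the content is entirely contained in Theorems~\ref{t-AtoAcyc} and~\ref{t-fcycmult}, and the only point requiring a word of care is the surjectivity onto $\cQSym^{-}$, i.e.\ that every non-Escher cyclic descent set is realized by an actual permutation, which is immediate from the definition of ``non-Escher.''
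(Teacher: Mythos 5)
Your proposal is correct and follows exactly the paper's route: the corollary is extracted from the proof of Theorem~\ref{t-fcycmult}, which applies Theorem~\ref{t-AtoAcyc} with $\st=\Des$, computes $\phi_{\Des}(v_{[\pi]})=F_{|\pi|,\cDes[\pi]}^{\cyc}$, and invokes the linear independence of the fundamental cyclic quasisymmetric functions. Your added remark on surjectivity onto $\cQSym^{-}$ (every non-Escher class is realized, by definition) is a correct and harmless elaboration of what the paper leaves implicit.
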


\subsection{A general cyclic shuffle-compatibility criterion for cyclic descent statistics}

The theorem below is \cite[Theorem 4.3]{Gessel2018}, which provides a necessary and sufficient condition for shuffle-compatibility of descent statistics in terms of quasisymmetric functions, and implies that the shuffle algebra of any shuffle-compatible descent statistic is a quotient algebra of $\QSym$.

\begin{thm} \label{t-scQSym}
A descent statistic $\st$ is shuffle-compatible if and only if there exists a $\mathbb{Q}$-algebra homomorphism $\phi_{\st}\colon\QSym\rightarrow A$, where $A$ is a $\mathbb{Q}$-algebra with basis $\{u_{\alpha}\}$ indexed by $\st$-equivalence classes $\alpha$ of compositions, such that $\phi_{\st}(F_{L})=u_{\alpha}$ whenever $L\in\alpha$. In this case, the linear map on ${\mathcal A}_{\st}$ defined by 
\[
\pi_{\st}\mapsto u_{\alpha},
\]
where $\Comp\pi\in\alpha$, is a $\mathbb{Q}$-algebra isomorphism from ${\mathcal A}_{\st}$ to $A$.
\end{thm}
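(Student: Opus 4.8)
The plan is to derive this as a specialization of the general refinement machinery — here the linear analogue of Theorem \ref{t-refine} — together with the known fact (stated above, from Theorem \ref{t-fqsym}) that $\Des$ is shuffle-compatible with $\mathcal{A}_{\Des}\cong\QSym$ via $\pi_{\Des}\mapsto F_{|\pi|,\Des\pi}$. The statement is an ``if and only if,'' so there are two directions to handle.

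For the backward direction, suppose the $\mathbb{Q}$-algebra homomorphism $\phi_{\st}\colon\QSym\to A$ exists with $\phi_{\st}(F_L)=u_\alpha$ whenever $L\in\alpha$. The key observation is that any descent statistic $\st$ is a coarsening of $\Des$ (since $\st$ depends only on the descent composition, hence only on the descent set and length), so the linear analogue of Theorem \ref{t-refine} applies with $\cst_1=\Des$, $\cst_2=\st$, and the homomorphism $\phi=\phi_{\st}\circ\phi_{\Des}\colon\mathcal{A}_{\Des}\to A$. One checks that $\phi(\pi_{\Des})=\phi_{\st}(F_{|\pi|,\Des\pi})=u_\alpha$ where $\alpha$ is the $\st$-equivalence class of $\Comp\pi$; since the $\st$-equivalence class of $\pi$ is determined by $\Comp\pi$, this is exactly the compatibility condition required by the refinement theorem. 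Hence $\st$ is shuffle-compatible and $u_\alpha\mapsto\alpha$ extends to an isomorphism $A\to\mathcal{A}_{\st}$; inverting gives the stated isomorphism $\pi_{\st}\mapsto u_\alpha$.

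For the forward direction, suppose $\st$ is shuffle-compatible. Take $A=\mathcal{A}_{\st}$ itself, with basis $\{u_\alpha\}$ where $u_\alpha=\pi_{\st}$ for any $\pi$ with $\Comp\pi\in\alpha$ (well-defined since $\st$ is a descent statistic). Define $\phi_{\st}\colon\QSym\to\mathcal{A}_{\st}$ on the fundamental basis by $\phi_{\st}(F_L)=u_\alpha$ with $L\in\alpha$, extended linearly; since $\{F_L\}$ is a basis of $\QSym$, this is a well-defined linear map. That it is an algebra homomorphism follows by comparing the multiplication rule for fundamental quasisymmetric functions (Theorem \ref{t-fqsym}) with the multiplication in $\mathcal{A}_{\st}$: for $L\vDash m$ and $K\vDash n$ with disjoint representatives $\pi,\sigma$,
\[
\phi_{\st}(F_L F_K)=\phi_{\st}\Bigl(\sum_{\tau\in\pi\shuffle\sigma}F_{m+n,\Des\tau}\Bigr)=\sum_{\tau\in\pi\shuffle\sigma}\tau_{\st}=\pi_{\st}\sigma_{\st}=\phi_{\st}(F_L)\phi_{\st}(F_K),
\]
where the middle equality uses that $\phi_{\st}(F_{m+n,\Des\tau})=\tau_{\st}$. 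One also sends $F_{\varnothing}=1$ to the unit $\varnothing_{\st}$. The isomorphism claim then again follows from Theorem \ref{t-refine} (linear version) applied to $\phi=\phi_{\st}\circ\phi_{\Des}$, or can be seen directly: $\phi_{\st}$ is surjective by construction, and any $\mathbb{Q}$-algebra homomorphism $\QSym\to A$ carrying the basis $\{F_L\}$ onto a basis $\{u_\alpha\}$ of $A$ descends to an isomorphism $\mathcal{A}_{\st}\to A$ under the identification just built.

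The main obstacle — really the only subtle point — is verifying that the putative $\phi_{\st}$ is genuinely \emph{well-defined} as an algebra homomorphism in each direction: in the forward direction, that $F_L\mapsto u_\alpha$ respects multiplication, which hinges on the compatibility of Theorem \ref{t-fqsym}'s shuffle formula with the definition of the shuffle-algebra product and requires knowing $\st$ is a descent statistic so that $\tau_{\st}$ depends only on $\Des\tau$; in the backward direction, that the composite $\phi_{\st}\circ\phi_{\Des}$ meets the hypotheses of the refinement theorem, i.e.\ that distinct $\st$-equivalence classes really do give distinct $u_\alpha$ and that every $\Des$-class maps into the correct $\st$-class. Both reduce to bookkeeping about the bijection $\Comp\leftrightarrow\Des$ and the fact that $\st$-equivalence of compositions is precisely equality of $\st$ on permutations with those descent compositions, so no new ideas beyond the excerpt are needed.
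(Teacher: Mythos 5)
Your proof is correct and follows essentially the same route as the paper's source for this result (Gessel--Zhuang) and as the paper's own proof of the cyclic analogue, Theorem \ref{t-csccQSym}: the forward direction takes $A=\mathcal{A}_{\st}$ and verifies multiplicativity of $\phi_{\st}$ on the fundamental basis via Theorem \ref{t-fqsym}, and the backward direction invokes the (linear) refinement theorem with $\Des$ as the refining statistic. The bookkeeping you flag---that $\st$-equivalence classes of compositions biject with $\st$-equivalence classes of permutations via $\Comp$---is exactly the point that makes both directions go through, and you have handled it correctly.
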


We now prove our main result of this section: a cyclic analogue of Theorem \ref{t-scQSym}.

\begin{thm} \label{t-csccQSym}
A cyclic descent statistic $\cst$ is cyclic shuffle-compatible if and only if there exists a $\mathbb{Q}$-algebra homomorphism $\phi_{\cst}\colon\cQSym^{-}\rightarrow A$, where $A$ is a $\mathbb{Q}$-algebra with basis $\{v_{\alpha}\}$ indexed by $\cst$-equivalence classes $\alpha$ of non-Escher cyclic compositions, such that $\phi_{\cst}(F_{[L]}^{\cyc})=v_{\alpha}$ whenever $[L]\in\alpha$. In this case, the linear map on ${\mathcal A}_{\cst}^{\cyc}$ defined by 
\[
[\pi]_{\cst}\mapsto v_{\alpha},
\]
where $\cComp[\pi]\in\alpha$, is a $\mathbb{Q}$-algebra isomorphism from ${\mathcal A}_{\cst}^{\cyc}$ to $A$. 
\end{thm}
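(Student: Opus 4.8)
The plan is to mirror the proof of Theorem \ref{t-scQSym} (\cite[Theorem 4.3]{Gessel2018}), using the machinery already built up in this section — particularly Corollary \ref{c-cDes} (which identifies $\mathcal{A}_{\cDes}^{\cyc}$ with $\cQSym^{-}$) and Theorem \ref{t-refine} (which lets one descend from a refinement to a coarsening). Since $\cst$ is a cyclic descent statistic, it is by definition a coarsening of $\cDes$, so $\cDes$ is a refinement of $\cst$; this is the input needed for Theorem \ref{t-refine}.

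For the forward direction, suppose $\cst$ is cyclic shuffle-compatible. First I would observe that the isomorphism $[\pi]_{\cDes} \mapsto F_{|\pi|,\cDes[\pi]}^{\cyc}$ of Corollary \ref{c-cDes} identifies $\mathcal{A}_{\cDes}^{\cyc} \cong \cQSym^{-}$, and that under this identification the basis element $F_{[L]}^{\cyc}$ corresponds to $[\pi]_{\cDes}$ for any $[\pi]$ with cyclic descent composition $[L]$. Now let $A = \mathcal{A}_{\cst}^{\cyc}$ with its basis $\{[\pi]_{\cst}\}$ indexed by $\cst$-equivalence classes $\alpha$ of non-Escher cyclic compositions (write $v_\alpha = [\pi]_{\cst}$ when $\cComp[\pi]\in\alpha$). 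Define $\phi_{\cst}\colon \cQSym^{-}\to A$ as the composite of the inverse of the Corollary \ref{c-cDes} isomorphism with the natural projection $\mathcal{A}_{\cDes}^{\cyc}\to\mathcal{A}_{\cst}^{\cyc}$ sending $[\pi]_{\cDes}\mapsto[\pi]_{\cst}$; this projection is an algebra homomorphism precisely because $\cDes$ refines $\cst$ and both statistics are cyclic shuffle-compatible (the structure constants for $\cst$ are obtained by pushing forward those for $\cDes$, exactly as in the computation in the proof of Theorem \ref{t-refine}). By construction $\phi_{\cst}(F_{[L]}^{\cyc}) = v_\alpha$ whenever $[L]\in\alpha$.

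For the converse, suppose such a $\phi_{\cst}$ exists. I would apply Theorem \ref{t-refine} with $\cst_1 = \cDes$ (cyclic shuffle-compatible by Corollary \ref{c-cDes}) and $\cst_2 = \cst$ (a coarsening of $\cDes$, hence $\cDes$ refines it). The required algebra homomorphism $\mathcal{A}_{\cDes}^{\cyc}\to A$ is the composite of the isomorphism $\mathcal{A}_{\cDes}^{\cyc}\xrightarrow{\sim}\cQSym^{-}$ from Corollary \ref{c-cDes} with $\phi_{\cst}$; one checks that it sends the basis element $[\pi]_{\cDes}$ (where $\cDes$-equivalence class $\beta$ of $[\pi]$ sits inside the $\cst$-equivalence class $\alpha$ with $\cComp[\pi]\in\alpha$) to $\phi_{\cst}(F_{\cComp[\pi]}^{\cyc}) = v_\alpha$, which is exactly the hypothesis of Theorem \ref{t-refine}. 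Theorem \ref{t-refine} then yields that $\cst$ is cyclic shuffle-compatible and that $v_\alpha\mapsto\alpha$ extends to an isomorphism $A\xrightarrow{\sim}\mathcal{A}_{\cst}^{\cyc}$; inverting gives the stated isomorphism $[\pi]_{\cst}\mapsto v_\alpha$ where $\cComp[\pi]\in\alpha$.

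The only genuinely delicate point — and the main thing to get right rather than a hard obstacle — is the bookkeeping of the three-way correspondence among non-Escher cyclic compositions $[L]$, $\cDes$-equivalence classes of cyclic permutations, and $\cst$-equivalence classes $\alpha$: one must verify that "$\cst$ is a cyclic descent statistic" guarantees that $\cst$-equivalence classes are genuinely unions of $\cComp$-fibers, so that $\phi_{\cst}$ is well-defined on the fundamental basis and the projection $\mathcal{A}_{\cDes}^{\cyc}\to\mathcal{A}_{\cst}^{\cyc}$ makes sense. Once this dictionary is set up, everything else is a formal consequence of Corollary \ref{c-cDes} and Theorem \ref{t-refine}, just as the linear case follows from the identification $\mathcal{A}_{\Des}\cong\QSym$ and Theorem 3.2 of \cite{Gessel2018}.
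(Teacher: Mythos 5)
Your proposal is correct and follows essentially the same route as the paper: the forward direction amounts to the same structure-constant computation (you package it as the composite of the inverse of the Corollary \ref{c-cDes} isomorphism with the projection $[\pi]_{\cDes}\mapsto[\pi]_{\cst}$, while the paper writes the computation directly on the fundamental basis using Theorem \ref{t-fcycmult}), and the converse is exactly the paper's application of Theorem \ref{t-refine} with $\cst_1=\cDes$ and $\cst_2=\cst$. No gaps.
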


\begin{proof}
Suppose that the cyclic descent statistic $\cst$ is cyclic shuffle-compatible. Let $A={\mathcal A}_{\cst}^{\cyc}$ be the cyclic shuffle algebra of $\cst$, and let $v_{\alpha}=[\pi]_{\cst}$ for any $[\pi]$ satisfying $\cComp[\pi]\in\alpha$, so that 
\[
v_{\beta}v_{\gamma}=\sum_{\alpha}c_{\beta,\gamma}^{\alpha}v_{\alpha}
\]
where $c_{\beta,\gamma}^{\alpha}$ is the number of cyclic permutations with cyclic descent composition in $\alpha$ that are obtained as a cyclic shuffle of two disjoint cyclic permutations, one with cyclic descent composition in $\beta$ and the other with cyclic descent composition in $\gamma$. Observe that $c_{\beta,\gamma}^{\alpha}=\sum_{[L]\in\alpha}c_{J,K}^{L}$ for any choice of $[J]\in\beta$ and $[K]\in\gamma$, where $c_{J,K}^{L}$ is the number of cyclic permutations with cyclic descent composition $[L]$ that are obtained as a cyclic shuffle of two disjoint cyclic permutations, one with cyclic descent composition $[J]$ and the other with cyclic descent composition $[K]$.

Define the linear map $\phi_{\cst}\colon\cQSym^{-}\rightarrow A$ by $\phi_{\cst}(F_{[L]}^{\cyc})=v_{\alpha}$ for $[L]\in\alpha$. Then any $[J]\in\beta$ and $[K]\in\gamma$ satisfy 
\begin{align*}
\phi_{\cst}(F_{[J]}^{\cyc}F_{[K]}^{\cyc}) & =\phi_{\cst}\Big(\sum_{[L]}c_{J,K}^{L}F_{[L]}^{\cyc}\Big)\\
 & =\sum_{\alpha}\sum_{[L]\in\alpha}c_{J,K}^{L}v_{\alpha}\\
 & =\sum_{\alpha}c_{\beta,\gamma}^{\alpha}v_{\alpha}\\
 & =v_{\beta}v_{\gamma}\\
 & =\phi_{\cst}(F_{[J]}^{\cyc})\phi_{\cst}(F_{[K]}^{\cyc}),
\end{align*}
so $\phi_{\cst}$ is a $\mathbb{Q}$-algebra homomorphism, thus completing one direction of the proof. 

The converse follows from Theorem \ref{t-refine}, where we take $\cst_1$ to be $\cDes$ (which is cyclic shuffle-compatible by Corollary \ref{c-cDes}) and $\cst_2$ to be $\cst$.
\end{proof}

\begin{cor}
If $\cst$ is a cyclic shuffle-compatible descent statistic, then ${\mathcal A}_{\cst}^{\cyc}$ is isomorphic to a quotient algebra of $\cQSym^-$.
\end{cor}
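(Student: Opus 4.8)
The plan is to derive this corollary directly from Theorem \ref{t-csccQSym} by a standard first-isomorphism-theorem argument, exactly paralleling how one obtains the analogous linear statement from Theorem \ref{t-scQSym}. Suppose $\cst$ is a cyclic shuffle-compatible cyclic descent statistic. Then Theorem \ref{t-csccQSym} furnishes a $\mathbb{Q}$-algebra homomorphism $\phi_{\cst}\colon\cQSym^{-}\rightarrow A$ onto a $\mathbb{Q}$-algebra $A$ with basis $\{v_{\alpha}\}$ indexed by the $\cst$-equivalence classes $\alpha$ of non-Escher cyclic compositions, with $\phi_{\cst}(F_{[L]}^{\cyc})=v_{\alpha}$ whenever $[L]\in\alpha$, and moreover the same theorem identifies $A$ with ${\mathcal A}_{\cst}^{\cyc}$ via the isomorphism $[\pi]_{\cst}\mapsto v_{\alpha}$ (where $\cComp[\pi]\in\alpha$).

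First I would observe that $\phi_{\cst}$ is surjective: since every $F_{[L]}^{\cyc}$ maps to some basis element $v_{\alpha}$, and every basis element $v_{\alpha}$ of $A$ is hit (choose any $[L]\in\alpha$), the image of $\phi_{\cst}$ contains a spanning set of $A$, hence equals $A$. Next I would invoke the first isomorphism theorem for $\mathbb{Q}$-algebras: $A\cong\cQSym^{-}/\ker\phi_{\cst}$, where $\ker\phi_{\cst}$ is a two-sided ideal of $\cQSym^{-}$. Composing this with the isomorphism $A\cong{\mathcal A}_{\cst}^{\cyc}$ from Theorem \ref{t-csccQSym} yields ${\mathcal A}_{\cst}^{\cyc}\cong\cQSym^{-}/\ker\phi_{\cst}$, which is precisely the assertion that ${\mathcal A}_{\cst}^{\cyc}$ is isomorphic to a quotient algebra of $\cQSym^{-}$.

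There is essentially no obstacle here — the work has all been done in Theorem \ref{t-csccQSym}. The only point requiring a sentence of care is the surjectivity of $\phi_{\cst}$, which is immediate from the description of $\phi_{\cst}$ on the $F_{[L]}^{\cyc}$; everything else is the general ring-theoretic fact that a surjective algebra homomorphism exhibits its target as a quotient of its source. Thus the proof is just: apply Theorem \ref{t-csccQSym}, note $\phi_{\cst}$ is onto, and pass to the quotient by $\ker\phi_{\cst}$.
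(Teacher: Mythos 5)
Your argument is correct and is the standard (and surely intended) one: the paper states this corollary without proof immediately after Theorem \ref{t-csccQSym}, and the route you take---surjectivity of $\phi_{\cst}$ onto $A$ because every basis element $v_{\alpha}$ is the image of some $F_{[L]}^{\cyc}$, followed by the first isomorphism theorem and the identification $A\cong{\mathcal A}_{\cst}^{\cyc}$---is exactly the argument the corollary is meant to encapsulate. No gaps.
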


To conclude this section, we state a special case of Theorem \ref{t-csccQSym} in which the homomorphism $\phi_{\cst}$ is given in terms of the homomorphism $\phi_{\st}$ of a related (linear) descent statistic; c.f.\ Theorem \ref{t-AtoAcyc}. We will use this theorem to prove cyclic shuffle-compatibility results for cyclic analogues of shuffle-compatible descent statistics.

\begin{thm} \label{t-main}
Let $\cst$ be a cyclic descent statistic and let $\st$ be a shuffle-compatible \textup{(}linear\textup{)} descent statistic, so that  there exists a $\mathbb{Q}$-algebra homomorphism $\phi_{\st}\colon\QSym\rightarrow A$ satisfying the conditions in Theorem \ref{t-scQSym}. Define the $\mathbb{Q}$-algebra homomorphism $\phi_{\cst}\colon\cQSym^{-}\rightarrow A$ by 
\[
\phi_{\cst}(F_{n,S}^{\cyc})=\sum_{i\in[n]}\phi_{\st}(F_{n,(S+i)\cap[n-1]}).
\]
Suppose that $\phi_{\cst}(F_{n,S}^{\cyc})=\phi_{\cst}(F_{n,T}^{\cyc})$ whenever $\cComp [S]$ and $\cComp [T]$ are $\cst$-equivalent cyclic compositions\textemdash so that we can write $\phi_{\cst}(F_{n,S}^{\cyc})=v_{\alpha}$ whenever $\cComp [S]\in\alpha$\textemdash and suppose that $\{v_{\alpha}\}$ is linearly independent. Then $\cst$ is cyclic shuffle-compatible and the linear map on ${\mathcal A}_{\cst}^{\cyc}$ defined by 
\[
[\pi]_{\cst}\mapsto v_{\alpha},
\]
where $\cComp[\pi]\in\alpha$, is a $\mathbb{Q}$-algebra isomorphism from ${\mathcal A}_{\cst}^{\cyc}$ to the span of $\{v_{\alpha}\}$, a subalgebra of $A$.
\end{thm}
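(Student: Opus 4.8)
The plan is to derive Theorem \ref{t-main} as a corollary of Theorem \ref{t-csccQSym} together with Theorem \ref{t-scQSym}. The key observation is that the displayed formula $\phi_{\cst}(F_{n,S}^{\cyc}) = \sum_{i\in[n]}\phi_{\st}(F_{n,(S+i)\cap[n-1]})$ is exactly $\phi_{\st}$ applied to the image of $F_{n,S}^{\cyc}$ under the inclusion $\cQSym^{-}\hookrightarrow\QSym$, since by definition $F_{n,S}^{\cyc} = \sum_{i\in[n]}F_{n,(S+i)\cap[n-1]}$ as an element of $\QSym$. In other words, $\phi_{\cst} = \phi_{\st}\circ\iota$ where $\iota\colon\cQSym^{-}\to\QSym$ is the inclusion. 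Since $\iota$ is a $\mathbb{Q}$-algebra homomorphism (because $\cQSym^{-}$ is a subalgebra of $\QSym$, as noted after Theorem \ref{t-fcycmult}) and $\phi_{\st}$ is a $\mathbb{Q}$-algebra homomorphism by hypothesis, their composite $\phi_{\cst}$ is automatically a $\mathbb{Q}$-algebra homomorphism. This is the main structural point, and it makes the ``homomorphism'' part of the proof essentially free.

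First I would verify that $\phi_{\cst}$ as defined is well-defined on all of $\cQSym^{-}$ and is a $\mathbb{Q}$-algebra homomorphism, via the factorization $\phi_{\cst} = \phi_{\st}\circ\iota$ just described. Next I would record the two hypotheses of the theorem: that $\phi_{\cst}(F_{n,S}^{\cyc})$ depends only on the $\cst$-equivalence class $\alpha$ of $\cComp[S]$, so that writing $v_\alpha \coloneqq \phi_{\cst}(F_{n,S}^{\cyc})$ for $\cComp[S]\in\alpha$ is unambiguous; and that $\{v_\alpha\}$ is linearly independent, hence a basis of its span $A' \coloneqq \operatorname{span}\{v_\alpha\}$, which is a subalgebra of $A$ since it is the image of the algebra homomorphism $\phi_{\cst}$. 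At this point the hypotheses of Theorem \ref{t-csccQSym} are met with $A$ replaced by $A'$: we have a $\mathbb{Q}$-algebra homomorphism $\phi_{\cst}\colon\cQSym^{-}\to A'$ onto an algebra with basis $\{v_\alpha\}$ indexed by $\cst$-equivalence classes of non-Escher cyclic compositions, and $\phi_{\cst}(F_{[L]}^{\cyc}) = v_\alpha$ whenever $[L]\in\alpha$.

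Then I would simply invoke Theorem \ref{t-csccQSym}: it yields that $\cst$ is cyclic shuffle-compatible and that the linear map ${\mathcal A}_{\cst}^{\cyc}\to A'$ sending $[\pi]_{\cst}\mapsto v_\alpha$ (where $\cComp[\pi]\in\alpha$) is a $\mathbb{Q}$-algebra isomorphism. Since $A' = \operatorname{span}\{v_\alpha\}$ is a subalgebra of $A$, this is exactly the conclusion of Theorem \ref{t-main}, and the proof is complete.

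**The only mild subtlety**, and the step I would treat most carefully, is confirming that $F_{n,S}^{\cyc}$ lies in $\cQSym^{-}$ and that its expansion in $\QSym$ is literally $\sum_{i\in[n]}F_{n,(S+i)\cap[n-1]}$, so that the defining formula for $\phi_{\cst}$ genuinely coincides with $\phi_{\st}\circ\iota$; this requires $S$ to range over non-Escher subsets, which is precisely the indexing set for the spanning elements of $\cQSym^{-}$. One should also note that the convention $F_{0,\emptyset}^{\cyc} = 1$ is consistent with $\phi_{\cst}$ being unital. Everything else is a direct application of results already established in the excerpt, so I expect no real obstacle beyond this bookkeeping.
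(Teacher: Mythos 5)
Your proposal is correct and takes essentially the same route the paper intends: the paper states Theorem \ref{t-main} as a special case of Theorem \ref{t-csccQSym} without writing out the details, and your argument supplies exactly those details. In particular, your key observation that $\phi_{\cst}$ is just the restriction of $\phi_{\st}$ to the subalgebra $\cQSym^{-}$ (so that it is automatically a $\mathbb{Q}$-algebra homomorphism), followed by applying Theorem \ref{t-csccQSym} with target the span of $\{v_{\alpha}\}$, is precisely the intended verification.
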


\section{Characterizations of cyclic shuffle algebras} \label{s-charcsa}

Our next goal is to use the theory developed in the previous section to give explicit descriptions of cyclic shuffle algebras. First, let us discuss a couple statistics---the cyclic peak set $\cPk$ and the cyclic peak number $\cpk$---whose definitions were omitted from the introduction. We will then characterize the cyclic shuffle algebras of $\cPk$, $(\cpk,\cdes)$, $\cpk$, and $\cdes$. This yields new proofs for the cyclic shuffle-compatibility of the statistics $\cPk$, $\cpk$, and $\cdes$, as well as the first proof for $(\cpk,\cdes)$.

\subsection{The cyclic peak set and cyclic peak number} \label{ss-cPkcpk}

The \textit{cyclic peak set} of a linear permutation $\pi\in\mathfrak{P}_n$ is defined by 
\[
\cPk \pi \coloneqq\{\,i\in[n] : \pi_{i-1}<\pi_{i}>\pi_{i+1}\text{ where }i\text{ is considered modulo }n\,\}.
\]
and the elements of $\cPk\pi$ are called \textit{cyclic peaks} of $\pi$. Then the \textit{cyclic peak set} of a cyclic permutation $[\pi]$ is defined to be the multiset 
\[
\cPk[\pi] \coloneqq \{\{\, \cPk\bar{\pi} : \bar{\pi}\in[\pi] \,\}\}.
\]
For example, we have
$\cPk[184756] = \{\{\, \{2,4,6\}^3, \{1,3,5\}^3 \,\}\}$. It is clear from the definitions that, in general, $\cPk[\pi]$ is the multiset consisting of all cyclic shifts of $\cPk \pi$.

The \textit{cyclic peak number} of a linear permutation $\pi$ is defined by
\[
\cpk \pi \coloneqq\left|\cPk \pi \right|,
\]
and the \textit{cyclic peak number} of a cyclic permutation $[\pi]$ by
\[
\cpk[\pi] \coloneqq \cpk\pi,
\]
which is well-defined because every linear permutation in $[\pi]$ has the same number of cyclic peaks. It is easy to see that $\cPk$ and $\cpk$ are both cyclic descent statistics, so they are uniquely determined by the cyclic descent composition (equivalently, the cyclic descent set and length).

When we characterize the $(\cpk,\cdes)$ cyclic shuffle algebra, we shall need to determine all values that the $(\cpk,\cdes)$ statistic can take, which we can do with the help of two lemmas. The first of these lemmas is Proposition 2.5 of \cite{Gessel2018}, so we omit its proof.

\begin{lem} \label{l-pkdes}
Let $n\geq1$.
\begin{enumerate}
\item [\normalfont{(a)}] If $\pi\in\mathfrak{P}_{n}$, then $0\leq\pk\pi\leq\left\lfloor (n-1)/2\right\rfloor $ and $\pk\pi\leq\des\pi\leq n-\pk\pi-1$.
\item [\normalfont{(b)}] If $j$ and $k$ are integers satisfying $0\leq j\leq\left\lfloor (n-1)/2\right\rfloor $ and $j\leq k\leq n-j-1$, then there exists $\pi\in\mathfrak{P}_{n}$ with $\pk\pi=j$ and $\des\pi=k$.
\end{enumerate}
\end{lem}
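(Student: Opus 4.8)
The plan is to translate everything into the language of increasing runs, i.e., descent compositions. Write $\Comp\pi=(\ell_{1},\dots,\ell_{m})$, so that $m=\des\pi+1$, every $\ell_{i}\geq1$, and $\ell_{1}+\cdots+\ell_{m}=n$. The crucial first step is to record the identity
\[
\pk\pi=\#\{\,i\in[m-1]:\ell_{i}\geq2\,\},
\]
that is, the peaks of $\pi$ correspond bijectively to the non-final increasing runs of length at least $2$: a peak at position $p$ is exactly a descent $p$ whose predecessor $p-1$ is an ascent, and such a $p$ is precisely the last position of a non-final run of length $\geq 2$. I would prove this by inspecting the three-letter window $\pi_{p-1}\pi_{p}\pi_{p+1}$, taking a little care with the boundary positions $p=2$ and $p=n-1$ and with runs of length $1$.

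Granting this identity, part (a) is a short counting argument. Directly from the identity, $\pk\pi\leq m-1=\des\pi$. Writing $p=\pk\pi$, exactly $p$ of the parts $\ell_{1},\dots,\ell_{m-1}$ are $\geq2$ and the rest (including $\ell_{m}$) are $\geq1$, so $n=\sum_{i}\ell_{i}\geq 2p+(m-1-p)+1=\des\pi+\pk\pi+1$, which rearranges to $\des\pi\leq n-\pk\pi-1$. Combining the two inequalities gives $2\pk\pi\leq n-1$, hence $\pk\pi\leq\lfloor(n-1)/2\rfloor$, and $\pk\pi\geq0$ is trivial.

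For part (b), given admissible $j$ and $k$, I would exhibit a composition $L=(\ell_{1},\dots,\ell_{k+1})$ of $n$ with positive parts of which exactly $j$ among $\ell_{1},\dots,\ell_{k}$ are $\geq2$, and then take $\pi$ to be any permutation with $\Comp\pi=L$ (such a permutation exists by the standard construction that assigns the smallest available values to the later runs, forcing a descent at each run boundary and none within a run). By the identity, this $\pi$ has $\des\pi=k$ and $\pk\pi=j$. A concrete choice: for $j\geq1$ take $(\ell_{1},\ell_{2},\dots,\ell_{j},\ell_{j+1},\dots,\ell_{k+1})=(n-k-j+1,\,2,\dots,2,\,1,\dots,1)$, where $k\leq n-j-1$ guarantees $\ell_{1}\geq2$ and that the parts sum to $n$; for $j=0$ take $(1,\dots,1,n-k)$, where $k\leq n-1$ guarantees the last part is positive.

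The only step requiring genuine care is the run-length formula for $\pk$; once it is established, both parts reduce to bookkeeping. (Since this is Proposition 2.5 of \cite{Gessel2018}, one may alternatively just cite it, as the paper does.)
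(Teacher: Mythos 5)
Your proposal is correct. The paper itself gives no proof of this lemma---it simply cites Proposition 2.5 of Gessel and Zhuang---so there is nothing internal to compare against; what you have written is a complete, self-contained elementary argument that could replace the citation. The key identity $\pk\pi=\#\{\,i\in[m-1]:\ell_{i}\geq2\,\}$ checks out: a descent at position $d=\ell_{1}+\cdots+\ell_{i}$ (with $i\leq m-1$, so automatically $d\leq n-1$) is a peak precisely when $d\geq 2$ and $d-1$ is not a descent, and both conditions together are equivalent to $\ell_{i}\geq2$ whether $i=1$ or $i\geq2$. The counting in part (a) is right ($n\geq 2j+(m-1-j)+1=\pk\pi+\des\pi+1$), and your explicit compositions in part (b) have the correct number of parts, sum to $n$, and have exactly $j$ of the first $k$ parts of size at least $2$, with the hypothesis $k\leq n-j-1$ used exactly where needed to make the first part at least $2$ (resp.\ the last part positive when $j=0$). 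The only thing worth stating explicitly if you write this up is the fact that every composition of $n$ with positive parts is the descent composition of some permutation, which you correctly flag via the standard construction.
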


\begin{lem} \label{l-cpkcdes}
Let $n\geq2$. If $\pi\in\mathfrak{P}_{n-1}$ and $m$ is greater than the largest letter of $\pi$, then $\cpk[\pi m] = \pk\pi+1$ and $\cdes[\pi m] = \des\pi+1$, where $\pi m$ is the permutation in $\mathfrak{P}_{n}$ obtained by appending the letter $m$ to $\pi$.
\end{lem}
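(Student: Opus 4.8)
The plan is a direct computation from the definitions. By definition, $\cdes$ and $\cpk$ applied to a cyclic permutation return the common value of the linear statistics $\cdes$ and $\cpk$ over all of its rotations, so it suffices to evaluate $\cdes$ and $\cpk$ on the single linear representative $\rho \coloneqq \pi m \in \mathfrak{P}_{n}$. Write $\rho = \rho_1 \rho_2 \cdots \rho_n$, so that $\rho_i = \pi_i$ for $1 \le i \le n-1$ and $\rho_n = m$; the two features of $\rho$ that drive the whole argument are that $m$ sits in the last position and that $m$ is larger than every other letter.

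First I would compute $\cDes\rho$ position by position. For $i \in \{1,\dots,n-2\}$ the inequality $\rho_i > \rho_{i+1}$ only involves letters of $\pi$, so position $i$ is a cyclic descent of $\rho$ exactly when $i$ is a descent of $\pi$. Position $i = n-1$ compares $\rho_{n-1} = \pi_{n-1}$ with $\rho_n = m$ and is never a cyclic descent because $m$ is largest; position $i = n$ compares $\rho_n = m$ with $\rho_1 = \pi_1$ and is always a cyclic descent for the same reason. Since $\Des\pi \subseteq \{1,\dots,n-2\}$ does not contain $n$, this gives $\cDes\rho = \Des\pi \cup \{n\}$, and hence $\cdes[\pi m] = \des\pi + 1$. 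I would then run the parallel analysis for $\cPk\rho$: for $i \in \{2,\dots,n-2\}$ the condition $\rho_{i-1} < \rho_i > \rho_{i+1}$ again only involves letters of $\pi$, so such an $i$ is a cyclic peak of $\rho$ exactly when it is a peak of $\pi$; positions $i = 1$ and $i = n-1$ each have $m = \rho_n$ as a neighbor of the center, and a cyclic peak requires both neighbors to be strictly smaller than the center, which is impossible since $m$ is the largest letter; and position $i = n$ requires $\pi_{n-1} < m > \pi_1$, which always holds. As $\Pk\pi \subseteq \{2,\dots,n-2\}$ does not contain $n$, we get $\cPk\rho = \Pk\pi \cup \{n\}$ and therefore $\cpk[\pi m] = \pk\pi + 1$.

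The one subtlety, and the only place the bookkeeping could go wrong, is the boundary behavior for small $n$: when $n = 2$ or $n = 3$ the permutation $\pi$ has length $1$ or $2$, the index range $\{2,\dots,n-2\}$ is empty, and some of the positions $1$, $n-1$, $n$ coincide modulo $n$. In those cases $\pi$ has no peaks and at most one descent, and one checks directly that $\pi m$ has exactly one cyclic peak (at the last position) and exactly $\des\pi + 1$ cyclic descents, so the general formulas remain valid. Once these small cases are dispatched, the remainder is the routine position-by-position comparison described above.
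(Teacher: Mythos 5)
Your proof is correct and takes essentially the same approach as the paper's, which observes in one line that every peak (resp.\ descent) of $\pi$ remains a cyclic peak (resp.\ cyclic descent) of $\pi m$ and that the only additional cyclic peak (resp.\ cyclic descent) is the one created at the position of $m$. Your position-by-position verification, including the check of the small cases $n=2,3$, is just a more explicit writeup of that same observation.
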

\begin{proof}
Every peak of $\pi$ is a cyclic peak of $\pi m$, and every cyclic peak of $\pi m$ is either $m$ or a peak of $\pi$. The same relationship is true for descents of $\pi$ and cyclic descents of $\pi m$.
\end{proof}

\bco \label{c-cpkcdes} Let $n\geq2$.
\begin{enumerate}
\item [\normalfont{(a)}] If $\pi\in\mathfrak{P}_n$, then $1\le \cpk\pi \le \fl{n/2}$ and $\cpk\pi\le \cdes\pi\le n-\cpk\pi$.
\item [\normalfont{(b)}] If $j$ and $k$ are integers satisfying $1\le j\le\fl{n/2}$ and
$j\le k\le n-j$, then there exists $\pi\in\mathfrak{P}_n$ with $\cpk\pi=j$ and $\cdes\pi=k$.
\end{enumerate}
\eco

\begin{proof}
Fix $\pi\in\mathfrak{P}_n$. Let $m$ be the largest letter of $\pi$, let $\bar{\pi}$ be the unique representative of $[\pi$] which ends with $m$, and let $\pi^{\prime}$ be the permutation of length $n-1$ obtained from $\bar{\pi}$ upon removing its last letter $m$. Applying Lemma \ref{l-cpkcdes}, we obtain
$$ \cpk\pi = \cpk[\bar{\pi}] = \pk\pi^{\prime} + 1 \qquad\text{and}\qquad \cdes\pi = \cdes[\bar{\pi}] = \des\pi^{\prime} + 1.$$
Then part (a) follows from these equations and Lemma \ref{l-pkdes} (a).

To prove part (b), let $j$ and $k$ be integers in the specified ranges. By Lemma \ref{l-pkdes} (b), we know there exists a permutation $\pi^{\prime}\in\mathfrak{P}_{n-1}$ with $\pk\pi^{\prime} = j-1$ and $\des\pi^{\prime} = k-1$. Let $m\in\mathbb{P}$ be greater than the largest letter of $\pi^{\prime}$; then it follows from Lemma \ref{l-cpkcdes} that $\pi m$ is a permutation in $\mathfrak{P}_{n}$ satisfying $\cpk\pi = j$ and $\cdes\pi = k$.
\end{proof}

\subsection{The cyclic shuffle algebra of \texorpdfstring{$\cPk$}{cPk}} We will construct the cyclic shuffle algebra ${\mathcal A}_{\cPk}^{\cyc}$ from the linear shuffle algebra ${\mathcal A}_{\Pk}$. The latter is known to be isomorphic to a subalgebra $\Pi$ of $\QSym$---introduced by Stembridge \cite{Stembridge1997}---called the \textit{algebra of peaks}, which is spanned by the \textit{peak quasisymmetric functions} $K_{n,S}$ where $n$ ranges over all non-negative integers and $S$ over all possible peak sets of permutations in $\mathfrak{P}_n$. We won't need the precise definition of $K_{n,S}$ here, only that the isomorphism from ${\mathcal A}_{\Pk}$ to $\Pi$ sends $\pi_{\Pk}$ to $K_{|\pi|,\Pk \pi}$. We state this fact in the following theorem, which appears as Theorem 4.7 of \cite{Gessel2018}.

\begin{thm}[Shuffle-compatibility of $\Pk$]\label{t-Pk}
The peak set $\Pk$ is shuffle-compatible, and the linear map on ${\mathcal A}_{\Pk}$ defined by $\pi{}_{\Pk}\mapsto K_{\left|\pi\right|,\Pk\pi}$ is a $\mathbb{Q}$-algebra isomorphism from ${\mathcal A}_{\Pk}$ to $\Pi$.
\end{thm}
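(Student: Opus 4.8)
The plan is to deduce this from the general criterion for descent statistics, Theorem~\ref{t-scQSym}, together with Stembridge's theory of enriched $P$-partitions. First I would observe that $\Pk$ is a descent statistic: since $i$ is a peak of $\pi\in\mathfrak{P}_n$ exactly when $i-1\notin\Des\pi$ and $i\in\Des\pi$, the peak set $\Pk\pi$ is determined by $\Des\pi$ and $n$, so $\Pk$ depends only on the descent composition and $\Pk L$ is meaningful for a composition $L$. Consequently the $\Pk$-equivalence classes of compositions of $n$ are in bijection with the possible peak sets of permutations in $\mathfrak{P}_n$, which is precisely how the spanning set $\{K_{n,S}\}$ of $\Pi$ is indexed; thus $\{K_{n,S}\}$ plays the role of the basis $\{u_\alpha\}$ in Theorem~\ref{t-scQSym}. (That the $K_{n,S}$ are linearly independent, hence a genuine basis of $\Pi$, is part of Stembridge's results and may be taken as given.)

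Next I would produce the required $\mathbb{Q}$-algebra homomorphism $\phi_{\st}\colon\QSym\to\Pi$ with $\phi_{\st}(F_L)=K_{|L|,\Pk L}$ for every composition $L$. The input here is Stembridge's enriched $P$-partition machinery: the enriched $P$-partition generating function of a labeled linear order is a peak quasisymmetric function depending only on the peak set, and the ``fundamental lemma'' for enriched $P$-partitions---the enriched counterpart of the shuffle decomposition underlying Theorem~\ref{t-fqsym}---gives the product rule
\[
K_{m,A}K_{n,B}=\sum_{\tau\in\pi\shuffle\sigma}K_{m+n,\Pk\tau},
\]
where $\pi$ is any permutation of length $m$ with peak set $A$ and $\sigma$ is any disjoint permutation of length $n$ with peak set $B$. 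Granting this, $\Pi$ is closed under multiplication, the map $\phi_{\st}\colon F_L\mapsto K_{|L|,\Pk L}$ is well defined (the right-hand side depends only on the $\Pk$-equivalence class of $L$), and its multiplicativity is immediate: applying $\phi_{\st}$ to the identity $F_{m,A}F_{n,B}=\sum_{\tau\in\pi\shuffle\sigma}F_{m+n,\Des\tau}$ from Theorem~\ref{t-fqsym} turns the right side into $\sum_{\tau\in\pi\shuffle\sigma}K_{m+n,\Pk\tau}=K_{m,A}K_{n,B}=\phi_{\st}(F_{m,A})\phi_{\st}(F_{n,B})$.

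Finally I would feed this into Theorem~\ref{t-scQSym} with $A=\Pi$, $\{u_\alpha\}=\{K_{n,S}\}$, and the homomorphism just constructed, which at once yields that $\Pk$ is shuffle-compatible and that $\pi_{\Pk}\mapsto K_{|\pi|,\Pk\pi}$ extends to a $\mathbb{Q}$-algebra isomorphism ${\mathcal A}_{\Pk}\to\Pi$, as claimed.

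The hard part is the middle step---establishing the peak product rule, equivalently the homomorphism $\phi_{\st}$---which is not a formal consequence of the material developed so far and genuinely requires Stembridge's enriched $P$-partition theory (or an equivalent direct argument about shuffles of enriched labeled posets); in a self-contained account one would either cite Stembridge or reprove the enriched fundamental lemma. Checking that $\Pk$ is a descent statistic and assembling everything via Theorem~\ref{t-scQSym} are routine.
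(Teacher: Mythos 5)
Your argument is correct; note, though, that the paper does not prove this statement at all---it is quoted as Theorem 4.7 of Gessel--Zhuang \cite{Gessel2018}, whose proof is exactly the route you describe: $\Pk$ is a descent statistic, Stembridge's enriched $P$-partition theory supplies the algebra homomorphism $F_{n,L}\mapsto K_{n,\Pk L}$ (equivalently the product rule for the $K_{n,S}$), and the general criterion of Theorem \ref{t-scQSym} then yields both shuffle-compatibility and the isomorphism ${\mathcal A}_{\Pk}\cong\Pi$. You are also right to flag the one genuinely nontrivial input---the enriched fundamental lemma, i.e.\ the multiplicativity of the peak map---which must be cited from Stembridge or reproved, everything else being routine.
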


The analogue of Stembridge's quasisymmetric peak functions in the cyclic setting are the \textit{cyclic peak quasisymmetric functions} $K_{n,S}^{\cyc}$ recently introduced by Liang \cite{2209.00051}. Here, we shall define the cyclic peak functions $K_{n,S}^{\cyc}$ in terms of the $K_{n,S}$. For brevity, let us say that $S$ is a \textit{cyclic peak set} of $[n]$ if $S$ is the cyclic peak set of some permutation of length $n$. Then, if $S$ is a cyclic peak set of $[n]$, let
\[
K_{n,S}^{\cyc}\coloneqq \sum_{i\in[n]}K_{n,(S+i)\backslash\{1,n\}} = \sum_{\bar{\pi}\in[\pi]}K_{n,\Pk\bar{\pi}}
\]
where $\pi$ is any permutation in $\mathfrak{P}_n$ with cyclic peak set $S$. We can also write $K_{n,[S]}^{\cyc}\coloneqq K_{n,S}^{\cyc}$ since the $K_{n,S}^{\cyc}$ are invariant under cyclic shift. Liang showed that the $K_{n,[S]}^{\cyc}$ are linearly independent, and they span a subalgebra $\Lambda$ of $\cQSym$ called the \textit{algebra of cyclic peaks}.\footnote{The algebra $\Lambda$ should not be confused with another subalgebra of $\cQSym$ commonly denoted $\Lambda$: the algebra of symmetric functions.}

The following theorem---which is equivalent to Equation (5.10) of \cite{2209.00051}---gives a multiplication rule for the $K_{n,[S]}^{\cyc}$. This multiplication rule also implies that $\cPk$ is cyclic shuffle-compatible, which was first proven by Domagalski et al.\ \cite{Detal:csc} using bijective means.

\begin{thm}
\label{t-kcycmult} 
Let $m$ and $n$ be non-negative integers, let $A$ be a cyclic peak set of $[m]$, and let $B$ be a cyclic peak set of $[n]$. Then 
\begin{equation}
K_{m,[A]}^{\cyc}K_{n,[B]}^{\cyc}=\sum_{[\tau]\in[\pi]\shuffle[\sigma]}K_{m+n,\cPk[\tau]}^{\cyc}\label{e-kcycmult}
\end{equation}
where $[\pi]$ is any cyclic permutation of length $m$ with cyclic peak set $[A]$ and $[\sigma]$ is any cyclic permutation \textup{(}with $\sigma$ disjoint from $\pi$\textup{)} of length $n$ with cyclic peak set $[B]$.
\end{thm}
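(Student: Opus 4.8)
The plan is to deduce this from Theorem~\ref{t-AtoAcyc}, exactly as the multiplication rule for $F^{\cyc}$ (Theorem~\ref{t-fcycmult}) was deduced. The linear ingredient is Theorem~\ref{t-Pk}: the peak set $\Pk$ is shuffle-compatible, with shuffle algebra $\mathcal{A}_{\Pk}$ isomorphic to the algebra of peaks $\Pi$ via $\pi_{\Pk}\mapsto K_{|\pi|,\Pk\pi}$. I would take $\st=\Pk$ and $\cst=\cPk$ in Theorem~\ref{t-AtoAcyc}, and set
\[
v_{[\pi]}=\sum_{\bar{\pi}\in[\pi]}\bar{\pi}_{\Pk}\in\mathcal{A}_{\Pk}.
\]
Applying the isomorphism $\phi_{\Pk}\colon\mathcal{A}_{\Pk}\to\Pi$ gives $\phi_{\Pk}(v_{[\pi]})=\sum_{\bar{\pi}\in[\pi]}K_{n,\Pk\bar{\pi}}=K_{n,\cPk\pi}^{\cyc}$ by the very definition of the cyclic peak quasisymmetric functions quoted above (where $n=|\pi|$). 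So under $\phi_{\Pk}$ the elements $v_{[\pi]}$ correspond precisely to the functions $K_{n,[S]}^{\cyc}$.

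The two hypotheses of Theorem~\ref{t-AtoAcyc} then need to be checked. First, if $[\pi]$ and $[\sigma]$ are $\cPk$-equivalent, i.e.\ $\cPk[\pi]=\cPk[\sigma]$ and $|\pi|=|\sigma|=n$, then $\cPk\pi$ and $\cPk\sigma$ are cyclic shifts of one another, so $K_{n,\cPk\pi}^{\cyc}=K_{n,\cPk\sigma}^{\cyc}$ (invariance under cyclic shift), hence $v_{[\pi]}=v_{[\sigma]}$ by injectivity of $\phi_{\Pk}$; in fact $\cPk$ is a cyclic descent statistic, so cyclic shift is exactly the equivalence. Second, the linear independence of $\{v_{[\pi]}\}$ is equivalent, via $\phi_{\Pk}$, to the linear independence of the $K_{n,[S]}^{\cyc}$, which is Liang's result cited in the paragraph preceding the theorem. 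With both hypotheses in hand, Theorem~\ref{t-AtoAcyc} yields that $\cPk$ is cyclic shuffle-compatible and that $\psi_{\cPk}\colon\mathcal{A}_{\cPk}^{\cyc}\to\mathcal{A}_{\Pk}$, $[\pi]_{\cPk}\mapsto v_{[\pi]}$, is a $\mathbb{Q}$-algebra isomorphism onto the span of $\{v_{[\pi]}\}$.

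Finally I would transport this isomorphism across $\phi_{\Pk}$ to land in $\Lambda\subseteq\cQSym$: the composite $\phi_{\Pk}\circ\psi_{\cPk}\colon\mathcal{A}_{\cPk}^{\cyc}\to\Lambda$ sends $[\pi]_{\cPk}$ to $K_{|\pi|,\cPk[\pi]}^{\cyc}$ and is a $\mathbb{Q}$-algebra isomorphism. The multiplication rule \eqref{e-kcycmult} is then read off: for cyclic permutations $[\pi]$, $[\sigma]$ with cyclic peak sets $[A]$, $[B]$ (and $\sigma$ disjoint from $\pi$), applying this isomorphism to the identity $[\pi]_{\cPk}[\sigma]_{\cPk}=\sum_{[\tau]\in[\pi]\shuffle[\sigma]}[\tau]_{\cPk}$ in $\mathcal{A}_{\cPk}^{\cyc}$ gives $K_{m,[A]}^{\cyc}K_{n,[B]}^{\cyc}=\sum_{[\tau]\in[\pi]\shuffle[\sigma]}K_{m+n,\cPk[\tau]}^{\cyc}$.

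I don't anticipate a genuine obstacle: the argument is a direct parallel of the $\cDes$ case. The only points requiring a little care are the bookkeeping with cyclic shifts of peak sets—one must remember that $\Pk\bar\pi=(\cPk\pi+i)\setminus\{1,n\}$ for the appropriate rotation, so the two displayed forms of $K_{n,S}^{\cyc}$ agree—and citing Liang's linear-independence result correctly, since that is the one input not proved within this excerpt.
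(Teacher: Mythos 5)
Your proposal is correct and is essentially the paper's argument: both reduce the identity to the general transfer principle, verify that $\phi_{\Pk}(v_{[\pi]})=K_{|\pi|,\cPk[\pi]}^{\cyc}$, that this depends only on the $\cPk$-equivalence class, and that the $K_{n,[S]}^{\cyc}$ are linearly independent (Liang's result), and then read off \eqref{e-kcycmult} from the resulting isomorphism $\mathcal{A}_{\cPk}^{\cyc}\cong\Lambda$. The only cosmetic difference is that you invoke Theorem \ref{t-AtoAcyc} at the level of the shuffle algebra $\mathcal{A}_{\Pk}$ (as the paper does for $\cDes$), whereas the paper invokes its $\QSym$-level reformulation, Theorem \ref{t-main}, with $\phi_{\Pk}\colon\QSym\to\Pi$; the two are interchangeable here since $\mathcal{A}_{\Pk}\cong\Pi$.
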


While Liang's proof of Theorem $\ref{t-kcycmult}$ uses enriched toric $[\vec{D}]$-partitions, we shall now use Theorem \ref{t-main} to supply an alternative proof.

\begin{proof}
First, we take $\phi_{\Pk}\colon\QSym\rightarrow\Pi$ to be the composition of the map
$F_{L}\mapsto\pi_{\Pk}$ with the map
$\pi_{\Pk}\mapsto K_{\left|\pi\right|,\Pk\pi}$ from Theorem \ref{t-Pk} where $\pi$ is any permutation with $\Pk\pi=\Pk L$; then $\phi_{\Pk}$ satisfies the conditions in Theorem \ref{t-scQSym}. 

Let $S$ be a non-Escher subset of $[n]$, and let $[P]$ be the cyclic peak set of any cyclic permutation $[\pi]$ of length $n$ with cyclic descent set $[S]$. Note that the sets $(S+i)\cap[n-1]$ where $i$ ranges from $1$ to $n$ are precisely the descent sets of the $n$ linear permutations in $[\pi]$. Hence, we have
\begin{align*}
\phi_{\cPk}(F_{n,S}^{\cyc}) & =\sum_{i\in[n]}\phi_{\Pk}(F_{n,(S+i)\cap[n-1]})=\sum_{\bar{\pi}\in[\pi]}\phi_{\Pk}(F_{n,\Des\bar{\pi}})=\sum_{\bar{\pi}\in[\pi]}K_{n,\Pk\bar{\pi}}=K_{n,[P]}^{\cyc}.
\end{align*}
Clearly, $\phi_{\cPk}(F_{n,S}^{\cyc})$ depends only on the $\cPk$-equivalence class of the cyclic composition $\cComp[S]$, and we know that the $K_{n,[P]}^{\cyc}$ are linearly independent. Applying Theorem \ref{t-main}, we conclude that $\cPk$ is cyclic shuffle-compatible and that $\mathcal{A}_{\cPk}^{\cyc}$ is isomorphic to $\Lambda$ via the isomorphism $[\pi]_{\cPk}\mapsto K_{|\pi|,\cPk[\pi]}^{\cyc}$, from which the multiplication rule \eqref{e-kcycmult} follows.
\end{proof}

\begin{cor}[Cyclic shuffle-compatibility of $\cPk$] \label{c-cPk}
The cyclic peak set $\cPk$ is cyclic shuffle-compatible, and the linear map on ${\mathcal A}_{\cPk}^{\cyc}$ defined by $[\pi]_{\cPk}\mapsto K_{\left|\pi\right|,\cPk[\pi]}^{\cyc}$ is a $\mathbb{Q}$-algebra isomorphism from ${\mathcal A}_{\cPk}^{\cyc}$ to $\Lambda$.
\end{cor}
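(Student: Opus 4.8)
The plan is to read this corollary off directly from the proof of Theorem~\ref{t-kcycmult}. That proof invokes Theorem~\ref{t-main} with $\st = \Pk$ and $\cst = \cPk$, and in doing so it already establishes exactly the two assertions of the corollary: that $\cPk$ is cyclic shuffle-compatible, and that $[\pi]_{\cPk} \mapsto K_{|\pi|,\cPk[\pi]}^{\cyc}$ extends to a $\mathbb{Q}$-algebra isomorphism from $\mathcal{A}_{\cPk}^{\cyc}$ onto the span of $\{K_{n,[P]}^{\cyc}\}$, which is the algebra of cyclic peaks $\Lambda$. So the corollary is a restatement of what has just been proven, and strictly speaking no further argument is needed.

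If one instead wishes to derive the corollary with Theorem~\ref{t-kcycmult} taken as a black box, the argument runs as follows. First, the right-hand side of~\eqref{e-kcycmult} is the generating function $\sum_{[\tau]\in[\pi]\shuffle[\sigma]} K_{m+n,\cPk[\tau]}^{\cyc}$, so knowledge of the product $K_{m,[A]}^{\cyc}K_{n,[B]}^{\cyc}$ together with linear independence of the $K_{n,[S]}^{\cyc}$ (proven in~\cite{2209.00051}) determines the multiset $\{\{\,\cPk[\tau] : [\tau]\in[\pi]\shuffle[\sigma]\,\}\}$; since the left-hand side of~\eqref{e-kcycmult} depends only on $[A] = \cPk[\pi]$, $[B] = \cPk[\sigma]$, $m$, and $n$, this gives cyclic shuffle-compatibility of $\cPk$. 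Second, by definition $\mathcal{A}_{\cPk}^{\cyc}$ has as a basis the $\cPk$-equivalence classes, and $[\pi]_{\cPk}\mapsto K_{|\pi|,\cPk[\pi]}^{\cyc}$ is a bijection onto the basis $\{K_{n,[S]}^{\cyc}\}$ of $\Lambda$; comparing~\eqref{e-kcycmult} with the defining product $[\pi]_{\cPk}[\sigma]_{\cPk} = \sum_{[\tau]\in[\pi]\shuffle[\sigma]} [\tau]_{\cPk}$ of the cyclic shuffle algebra shows this bijection is multiplicative, hence a $\mathbb{Q}$-algebra isomorphism.

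The only nontrivial input is the linear independence of the cyclic peak quasisymmetric functions, which is established in~\cite{2209.00051} and was already used in the proof of Theorem~\ref{t-kcycmult}; apart from that, everything is bookkeeping, so I do not anticipate any genuine obstacle.
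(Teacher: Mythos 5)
Your proposal is correct and matches the paper exactly: the paper offers no separate proof of this corollary, since the proof of Theorem~\ref{t-kcycmult} already concludes via Theorem~\ref{t-main} that $\cPk$ is cyclic shuffle-compatible and that $[\pi]_{\cPk}\mapsto K_{|\pi|,\cPk[\pi]}^{\cyc}$ is an isomorphism onto $\Lambda$. Your alternative black-box derivation from the multiplication rule \eqref{e-kcycmult} plus linear independence of the $K_{n,[S]}^{\cyc}$ is also sound, but unnecessary.
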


\subsection{The cyclic shuffle algebra of \texorpdfstring{$(\cpk,\cdes)$}{(cpk, cdes)}}

We will now use Theorem \ref{t-main} to construct the cyclic shuffle algebra ${\mathcal A}_{(\cpk,\cdes)}^{\cyc}$ from the linear shuffle algebra ${\mathcal A}_{(\pk,\des)}$. We begin by recalling the following result about ${\mathcal A}_{(\pk,\des)}$, which is Theorem 5.9 of Gessel and Zhuang \cite{Gessel2018}. Below, we will use the notation $\mathbb{Q}[[t*]]$ to denote the $\mathbb{Q}$-algebra of formal power series in $t$ where the multiplication is given by the \textit{Hadamard product} $*$, defined by
\[
\Big(\sum_{n=0}^{\infty}a_{n}t^{n}\Big)*\Big(\sum_{n=0}^{\infty}b_{n}t^{n}\Big)\coloneqq\sum_{n=0}^{\infty}a_{n}b_{n}t^{n}.
\]
\begin{thm}[Shuffle-compatibility of $(\pk,\des)$] \label{t-pkdes}
\leavevmode
\begin{enumerate}
\item [\normalfont{(a)}]The pair $(\pk,\des)$ is shuffle-compatible.
\item [\normalfont{(b)}]Let 
\begin{align*}
u_{n,j,k}^{(\pk,\des)} & =\frac{t^{j+1}(y+t)^{k-j}(1+yt)^{n-j-k-1}(1+y)^{2j+1}}{(1-t)^{n+1}}x^{n}.
\end{align*}
Then the linear map on $\mathcal{A}_{(\pk,\des)}$ defined by 
\[
\pi{}_{(\pk,\des)}\mapsto\begin{cases}
u_{\left|\pi\right|,\pk\pi,\des\pi}^{(\pk,\des)}, & \text{if }\left|\pi\right|\geq1,\\
1/(1-t), & \text{if }\left|\pi\right|=0,
\end{cases}
\]
is a $\mathbb{Q}$-algebra isomorphism from $\mathcal{A}_{(\pk,\des)}$
to the span of 
\[
\left\{ \frac{1}{1-t}\right\} \bigcup\{u_{n,j,k}^{(\pk,\des)}\}_{\substack{n\geq1,\qquad\quad\;\;\;\:\\
0\leq j\leq\left\lfloor (n-1)/2\right\rfloor ,\\
j\leq k\leq n-j-1,\quad
}
},
\]
a subalgebra of $\mathbb{Q}[[t*]][x,y]$.
\end{enumerate}
\end{thm}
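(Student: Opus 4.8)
The plan is to deduce everything from Theorem~\ref{t-scQSym}: it suffices to construct a $\mathbb{Q}$-algebra homomorphism $\phi_{(\pk,\des)}\colon\QSym\to\mathbb{Q}[[t*]][x,y]$ sending $F_{n,L}$ to $u^{(\pk,\des)}_{n,\pk L,\des L}$ for every nonempty composition $L$ and sending $F_{0,\varnothing}=1$ to $1/(1-t)$ (the Hadamard-product identity). Granting this, Theorem~\ref{t-scQSym} immediately yields part (a), and part (b) follows because the image of $\phi_{(\pk,\des)}$ is exactly the span displayed in the statement and the induced isomorphism $\mathcal{A}_{(\pk,\des)}\to\operatorname{im}\phi_{(\pk,\des)}$ is the displayed map. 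Two easy preliminaries set the stage. First, Lemma~\ref{l-pkdes} shows that the $(\pk,\des)$-equivalence classes of nonempty compositions are precisely indexed by the triples $(n,j,k)$ with $0\le j\le\lfloor(n-1)/2\rfloor$ and $j\le k\le n-j-1$, so $\{1/(1-t)\}\cup\{u^{(\pk,\des)}_{n,j,k}\}$ carries the correct index set. Second, these elements are linearly independent: comparing coefficients of powers of $x$ in a putative $\mathbb{Q}$-linear relation and then multiplying through by $(1-t)^{n+1}$ (a manipulation of ordinary power series, which is all that linear independence detects) reduces the claim to linear independence in $\mathbb{Q}[y][t]$ of the polynomials $t^{j+1}(y+t)^{k-j}(1+yt)^{n-j-k-1}(1+y)^{2j+1}$ over the admissible $(j,k)$, which is readily checked by inspecting lowest-order terms in $t$ together with highest-order terms in $y$.

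The substance of the proof is the construction of $\phi_{(\pk,\des)}$, which I would carry out one homogeneous degree at a time. For each integer $m\ge 1$, I would specify a $\mathbb{Q}$-algebra homomorphism $\Theta_m\colon\QSym\to\mathbb{Q}[x,y]$ arising from a concrete substitution of the variables $x_1,x_2,\dots$ (every substitution yields an algebra map, since $\QSym$ is a subalgebra of the power series ring), chosen so that $\Theta_m(F_{n,L})$ is the coefficient of $t^m$ in $u^{(\pk,\des)}_{n,\pk L,\des L}$. Then $\phi_{(\pk,\des)}(f)\coloneqq\sum_{m\ge 0}\Theta_m(f)\,t^m$ defines a map into $\mathbb{Q}[[t*]][x,y]$ that is a $\mathbb{Q}$-algebra homomorphism precisely because the Hadamard product is the degreewise product matching the family $\{\Theta_m\}$. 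The substitution I have in mind is an enriched specialization in the spirit of Stembridge's enriched $(P,\omega)$-partitions, in which a variable ranges over $\{1,\dots,m\}$ with an attached sign: the signed positions should produce the factors $(1+y)^{2\pk L+1}$ (peaks) and $(y+t)^{\des L-\pk L}$ (the remaining descents), the unsigned positions the factor $(1+yt)^{\,n-\pk L-\des L-1}$ (the remaining ascents), and the geometric sum over $m$ the denominator $(1-t)^{n+1}$.

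The main obstacle is exactly the verification that $\Theta_m(F_{n,L})$ depends only on $n$, $\pk L$, and $\des L$, and that these coefficients assemble into the factorization
\[
\sum_{m\ge 0}\Theta_m(F_{n,L})\,t^m=\frac{t^{\pk L+1}(y+t)^{\des L-\pk L}(1+yt)^{\,n-\pk L-\des L-1}(1+y)^{2\pk L+1}}{(1-t)^{n+1}}\,x^n.
\]
This is a generating-function computation: one expands $F_{n,L}$ as the sum over weakly increasing integer sequences whose strict ascents are prescribed by $\Des L$, applies the enriched substitution, and reorganizes the sum around the increasing runs of $L$, identifying each block with a product of geometric-type series. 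The delicate point is that a peak of a permutation with descent composition $L$ straddles two consecutive runs of $L$, so the factorization is not simply run-by-run; the cleanest route is probably to peel off the peak contribution using the fundamental-to-peak expansion underlying Theorem~\ref{t-Pk} and treat the residual descent/ascent bookkeeping separately, the two halves being reconciled through the constraints $\pk L\le\des L\le n-\pk L-1$. Once this identity is in hand, all the hypotheses of Theorem~\ref{t-scQSym} are satisfied and the theorem follows, the assignment $F_{0,\varnothing}\mapsto 1/(1-t)$ accounting for the empty permutation.
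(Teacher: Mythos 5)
First, a point of comparison: the paper does not prove Theorem \ref{t-pkdes} at all --- it is recalled verbatim as Theorem 5.9 of Gessel and Zhuang \cite{Gessel2018}, so there is no in-paper proof to measure your attempt against. That said, your architecture is the right one and matches how the result is actually established: reduce to Theorem \ref{t-scQSym}; use Lemma \ref{l-pkdes} to see that the triples $(n,j,k)$ with $0\le j\le\lfloor(n-1)/2\rfloor$ and $j\le k\le n-j-1$ index the $(\pk,\des)$-equivalence classes of nonempty compositions; check linear independence by noting that the least power of $t$ in $(1-t)^{n+1}u^{(\pk,\des)}_{n,j,k}/x^n$ is $t^{j+1}$ with coefficient $y^{k-j}(1+y)^{2j+1}$, so distinct $j$ are separated by the $t$-valuation and, for fixed $j$, distinct $k$ by the $y$-valuation (this is the same device the paper uses for $v^{(\cpk,\cdes)}_{n,j,k}$ in Theorem \ref{t-cpkcdes}); and assemble $\phi_{(\pk,\des)}$ from a $t$-graded family of specializations $\Theta_m$ via the Hadamard product. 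All of this is correct.

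The gap is that the entire content of the theorem lives in the one step you defer: the identity $\sum_{m\ge0}\Theta_m(F_{n,L})\,t^m=u^{(\pk,\des)}_{n,\pk L,\des L}$, i.e.\ the existence of a specialization of $\QSym$ under which $F_{n,L}$ depends only on $(n,\pk L,\des L)$ and factors as stated. You correctly identify why this is delicate (a peak straddles two consecutive runs of $L$, so the product does not factor run-by-run), but ``the cleanest route is probably to peel off the peak contribution'' is a plan, not an argument. Two things are missing: (i) the actual substitution --- note that the $y$-weight attached to signed values is not a naive substitution of the $x_i$, so one must either work with an ordered doubled alphabet or with Stembridge-type enriched $P$-partitions refined by a descent statistic, and justify in either case that the resulting map is still an algebra homomorphism on $\QSym$; and (ii) the computation showing the coefficients assemble into $t^{j+1}(y+t)^{k-j}(1+yt)^{n-j-k-1}(1+y)^{2j+1}/(1-t)^{n+1}$. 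Since everything else in your write-up is routine bookkeeping once this identity is granted, the proposal as written does not yet constitute a proof; to complete it you must either carry out that generating-function computation or, as the paper does, import the corresponding result of Gessel and Zhuang.
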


We note that, in the definition of $u_{n,j,k}^{(\pk,\des)}$, all products should be interpreted as ordinary multiplication; the Hadamard product in $t$ is only used when multiplying elements in the span of the $u_{n,j,k}^{(\pk,\des)}$. The same is true in Theorems \ref{t-cpkcdes}, \ref{t-cpk}, and \ref{t-cdes} presented later in this section.

\begin{thm}[Cyclic shuffle-compatibility of $(\cpk,\cdes)$] \label{t-cpkcdes}
\leavevmode
\begin{enumerate}
\item [\normalfont{(a)}]The pair $(\cpk,\cdes)$ is cyclic shuffle-compatible.
\item [\normalfont{(b)}]
Let 
\begin{align*}
v_{n,j,k}^{(\cpk,\cdes)} & =ju_{n,j-1,k}^{(\pk,\des)}+ju_{n,j-1,k-1}^{(\pk,\des)}+(k-j)u_{n,j,k-1}^{(\pk,\des)}+(n-j-k)u_{n,j,k}^{(\pk,\des)}\\
 & =[j(y+t)(1+yt)(1+y+t+yt)\\
 & \qquad\qquad\qquad\qquad+((k-j)(1+yt)+(n-j-k)(y+t))t(1+y)^{2}]\\
 & \qquad\quad\times\frac{t^{j}(y+t)^{k-j-1}(1+yt)^{n-j-k-1}(1+y)^{2j-1}}{(1-t)^{n+1}}x^{n}.
\end{align*}
Then the linear map on ${\mathcal A}_{(\cpk,\cdes)}^{\cyc}$ defined by 
\[
[\pi]_{(\cpk,\cdes)}\mapsto\begin{cases}
v_{\left|\pi\right|,\cpk[\pi],\cdes[\pi]}^{(\cpk,\cdes)}, & \text{if }\left|\pi\right|\geq1,\\
1/(1-t), & \text{if }\left|\pi\right|=0,
\end{cases}
\]
is a $\mathbb{Q}$-algebra homomorphism from ${\mathcal A}_{(\cpk,\cdes)}^{\cyc}$ to the span of 
\[
\left\{ \frac{1}{1-t},\frac{t(1+y)}{(1-t)^{2}}x\right\} \bigcup\{v_{n,j,k}^{(\cpk,\cdes)}\}_{n\geq2,\:1\leq j\leq\left\lfloor n/2\right\rfloor ,\:j\leq k\leq n-j},
\]
a subalgebra of $\mathbb{Q}[[t*]][x,y]$.
\item [\normalfont{(c)}]For all $n\geq2$, the $n$th homogeneous component of ${\mathcal A}_{(\cpk,\cdes)}^{\cyc}$ has dimension $\left\lfloor n^{2}/4\right\rfloor $.
\end{enumerate}
\end{thm}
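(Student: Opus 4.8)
The plan is to apply Theorem~\ref{t-main} with the linear descent statistic $\st=(\pk,\des)$ and the cyclic descent statistic $\cst=(\cpk,\cdes)$. By Theorem~\ref{t-pkdes}, $(\pk,\des)$ is a shuffle-compatible descent statistic, so Theorem~\ref{t-scQSym} supplies a $\mathbb{Q}$-algebra homomorphism $\phi_{(\pk,\des)}\colon\QSym\to\mathbb{Q}[[t*]][x,y]$ with $\phi_{(\pk,\des)}(F_{L})=u_{|L|,\pk L,\des L}^{(\pk,\des)}$ for nonempty $L$ and $\phi_{(\pk,\des)}(1)=1/(1-t)$. Following Theorem~\ref{t-main} I would set $\phi_{(\cpk,\cdes)}(F_{n,S}^{\cyc})=\sum_{i\in[n]}\phi_{(\pk,\des)}(F_{n,(S+i)\cap[n-1]})$. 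If $[\pi]$ is a cyclic permutation of length $n$ with cyclic descent set $[S]$, then the sets $(S+i)\cap[n-1]$ for $i\in[n]$ are exactly the descent sets of the $n$ linear representatives $\bar{\pi}\in[\pi]$, so
\[
\phi_{(\cpk,\cdes)}(F_{n,S}^{\cyc})=\sum_{\bar{\pi}\in[\pi]}u_{n,\pk\bar{\pi},\des\bar{\pi}}^{(\pk,\des)}.
\]
The crux is then the combinatorial claim that, for $n\ge2$, if $\cpk[\pi]=j$ and $\cdes[\pi]=k$, then among the $n$ representatives $\bar{\pi}$ of $[\pi]$ exactly $j$ have $(\pk\bar{\pi},\des\bar{\pi})=(j-1,k-1)$, exactly $k-j$ have $(j,k-1)$, exactly $j$ have $(j-1,k)$, and exactly $n-j-k$ have $(j,k)$. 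Granting this, the displayed sum equals $j\,u_{n,j-1,k-1}^{(\pk,\des)}+j\,u_{n,j-1,k}^{(\pk,\des)}+(k-j)\,u_{n,j,k-1}^{(\pk,\des)}+(n-j-k)\,u_{n,j,k}^{(\pk,\des)}$, which is $v_{n,j,k}^{(\cpk,\cdes)}$; the closed form in part (b) then follows by substituting the explicit formula for $u_{n,p,q}^{(\pk,\des)}$ and factoring (using $1+y+t+yt=(1+y)(1+t)$). The cases $n=0$ and $n=1$ are checked directly and produce the two exceptional basis elements $1/(1-t)$ and $t(1+y)/(1-t)^{2}\,x$.

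To prove the combinatorial claim I would argue on the cyclic word of $n$ ``gaps'' of $[\pi]$, each labelled $A$ (ascent) or $D$ (descent): there are $k$ gaps labelled $D$, and since $\cpk[\pi]=j$ this circular word has exactly $j$ maximal runs of $D$'s and $j$ maximal runs of $A$'s. Fixing a representative $\pi$, cutting at gap $g$ (between positions $g$ and $g+1$) yields the representative $\bar{\pi}=\pi_{g+1}\cdots\pi_{g}$, and one checks that $\Pk\bar{\pi}=\cPk\pi\setminus\{g,g+1\}$ and $\Des\bar{\pi}=\cDes\pi\setminus\{g\}$. Hence $\des\bar{\pi}$ is $k$ or $k-1$ according as $g$ is an $A$-gap or a $D$-gap, while $\pk\bar{\pi}=j-1$ exactly when $g$ is the first gap of a maximal $D$-run (so a cyclic peak of $\pi$ lies at position $g$) or the last gap of a maximal $A$-run (so a cyclic peak lies at position $g+1$), and $\pk\bar{\pi}=j$ otherwise. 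Sorting the $n$ gaps into these four classes ($j$ first-of-$D$-run gaps, $k-j$ further $D$-gaps, $j$ last-of-$A$-run gaps, $n-k-j$ further $A$-gaps) gives the claimed multiplicities. This analysis needs $n\ge2$, which is why $n=0,1$ were handled separately above.

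It remains to verify the linear independence hypothesis of Theorem~\ref{t-main}. Since elements of distinct graded components are distinguished by the power of $x$, linear independence of $\{v_{\alpha}\}$ reduces to linear independence within each component; the degree-$0$ and degree-$1$ components are one-dimensional, so the only substantive case is to show that for each $n\ge2$ the $v_{n,j,k}^{(\cpk,\cdes)}$ are linearly independent, where $(j,k)$ runs over the achievable values $1\le j\le\left\lfloor n/2\right\rfloor$, $j\le k\le n-j$ from Corollary~\ref{c-cpkcdes}. Regarding $u_{n,p,q}^{(\pk,\des)}$ in $\mathbb{Q}[x,y][[t]]$ (expanding $(1-t)^{-(n+1)}$ as a power series), it has $t$-valuation $p+1$; hence $v_{n,j,k}^{(\cpk,\cdes)}$ has $t$-valuation exactly $j$, with $[t^{j}]v_{n,j,k}^{(\cpk,\cdes)}=j\,x^{n}(1+y)^{2j}y^{k-j}\neq0$. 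Given a relation $\sum c_{j,k}v_{n,j,k}^{(\cpk,\cdes)}=0$, taking the smallest $j_{0}$ with some $c_{j_{0},k}\neq0$ and extracting the coefficient of $t^{j_{0}}$ forces $\sum_{k}c_{j_{0},k}y^{k-j_{0}}=0$, whence all $c_{j_{0},k}=0$; induction on $j_{0}$ finishes the argument. Theorem~\ref{t-main} now yields part (a), that $(\cpk,\cdes)$ is cyclic shuffle-compatible, and shows that the map of part (b) is in fact a $\mathbb{Q}$-algebra isomorphism of ${\mathcal A}_{(\cpk,\cdes)}^{\cyc}$ onto the span of the $v_{\alpha}$'s (a subalgebra of $\mathbb{Q}[[t*]][x,y]$), which in particular establishes (b). Part (c) is then immediate: once ${\mathcal A}_{(\cpk,\cdes)}^{\cyc}$ is well-defined, its $n$th homogeneous component has a basis indexed by the $(\cpk,\cdes)$-equivalence classes of length-$n$ cyclic permutations, which by Corollary~\ref{c-cpkcdes} correspond to the pairs $(j,k)$ with $1\le j\le\left\lfloor n/2\right\rfloor$ and $j\le k\le n-j$; a short count gives $\sum_{j=1}^{\left\lfloor n/2\right\rfloor}(n-2j+1)=\left\lfloor n^{2}/4\right\rfloor$.

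The main obstacle will be the combinatorial claim of the second paragraph. The algebraic parts---the closed-form manipulation and the $t$-valuation argument for linear independence---are routine once the distribution of $(\pk,\des)$ over the rotations of $[\pi]$ is known, but identifying precisely which cyclic peak each cut destroys, and hence pinning down the four multiplicities, requires the careful case analysis of the gap word.
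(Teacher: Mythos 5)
Your proposal is correct and follows essentially the same route as the paper: apply Theorem~\ref{t-main} with $\st=(\pk,\des)$, establish the same four multiplicities $j,j,k-j,n-j-k$ for the distribution of $(\pk,\des)$ over the $n$ rotations (your gap/run analysis is a more detailed justification of the claim the paper states directly), prove linear independence by a leading-term argument in $t$ and $y$ (the paper uses the least monomial $t^{j}y^{k-j}$, which is the same idea as your $t$-valuation induction), and count equivalence classes via Corollary~\ref{c-cpkcdes} for part (c).
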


\begin{proof}
We shall apply Theorem \ref{t-main} using $\st = (\pk,\des)$. In doing so, we take $\phi_{(\pk,\des)}$ to be the composition of the map 
$F_{L}\mapsto\pi_{(\pk,\des)}$ with the map 
from Theorem \ref{t-pkdes} (b), where $\pi$ is any permutation with $\pk\pi=\pk L$ and $\des\pi=\des L$. 

Let $\pi$ be a permutation of length $n\geq2$ with cyclic descent set $S$, and let $j=\cpk[\pi]$ and $k=\cdes[\pi]$ (which only depend on $S$ and not the specific choice of $\pi$). Let us consider the $n$ linear permutations in $[\pi]$, whose descent sets are given by $(S+i)\cap[n-1]$ where $i$ ranges from 1 to $n$. Among these $n$ permutations, the following hold:
\begin{itemize}
\item Exactly $j$ of these permutations have $\cpk[\pi]-1$ peaks and $\cdes[\pi]$ descents, which are those that have a cyclic peak in the first position.
\item Exactly $j$ of these permutations have $\cpk[\pi]-1$ peaks and $\cdes[\pi]-1$ descents, which are those that have a cyclic peak in the last position.
\item Exactly $k-j$ of these permutations have $\cpk[\pi]$ peaks and $\cdes[\pi]-1$ descents, which are those that have a cyclic descent in the last position which is not a cyclic peak.
\item The remaining $n-j-k$ permutations have $\cpk[\pi]$ peaks and $\cdes[\pi]$ descents.
\end{itemize}

Therefore, we have
\begin{align*}
\phi_{(\cpk,\cdes)}(F_{n,S}^{\cyc}) & =\sum_{i\in[n]}\phi_{(\pk,\des)}(F_{n,(S+i)\cap[n-1]})\\
 & = ju_{n,j-1,k}^{(\pk,\des)}+ju_{n,j-1,k-1}^{(\pk,\des)}+(k-j)u_{n,j,k-1}^{(\pk,\des)}+(n-j-k)u_{n,j,k}^{(\pk,\des)}\\
 & =v_{n,j,k}^{(\cpk,\cdes)}.
\end{align*}

For $n=0$ and $n=1$, we have 
\[
\phi_{(\cpk,\cdes)}(F_{0,\emptyset}^{\cyc})=\frac{1}{1-t}\quad\text{and}\quad\phi_{(\cpk,\cdes)}(F_{1,\emptyset}^{\cyc})=\frac{t(1+y)}{(1-t)^{2}}x.
\]
Clearly, $\phi_{(\cpk,\cdes)}(F_{n,S}^{\cyc})$ depends only on the
$(\cpk,\cdes)$-equivalence class of $\cComp[S]$. 

To prove linear independence, let us order monomials in the variables $t$ and $y$ lexicographically by the exponent of $t$ followed by the exponent of $y$, that is, $t^{a}y^{b}>t^{c}y^{d}$ if and only if either $a>c$, or if $a=c$ and $b>d$. Since Corollary \ref{c-cpkcdes} implies $j\geq1$, it is readily verified that the least monomial in $(1-t)^{n+1}v_{n,j,k}^{(\cpk,\cdes)}/x^{n}$
is $t^{j}y^{k-j}$; 
thus
\[
\left\{ \frac{(1-t)^{n+1}}{x^{n}}v_{n,j,k}^{(\cpk,\cdes)}\right\} _{\substack{1\leq j\leq\left\lfloor n/2\right\rfloor \\
j\leq k\leq n-j\:\,
}
}
\]
is linearly independent for each $n\geq2$, and this in turn implies that
\[
\left\{ \frac{1}{1-t},\frac{t(1+y)}{(1-t)^{2}}x\right\} \bigcup\{v_{n,j,k}^{(\cpk,\cdes)}\}_{\substack{n\geq2\qquad\\
\:1\leq j\leq\left\lfloor n/2\right\rfloor \\
j\leq k\leq n-j\:
}
}
\]
is linearly independent. Corollary \ref{c-cpkcdes} ensures that we have the correct limits on $j$ and $k$, so we can use Theorem \ref{t-main} to conclude that parts (a) and (b) hold.

From Corollary \ref{c-cpkcdes}, we know that for $n\geq2$, the number of $(\cpk,\cdes)$-equivalence classes of cyclic permutations of length $n$ is 
\[
\sum_{j=1}^{\left\lfloor n/2\right\rfloor }((n-j)-j+1)=\sum_{j=1}^{\left\lfloor n/2\right\rfloor }(n-2j+1),
\]
and it is straightforward to show that this is equal to $\left\lfloor n^{2}/4\right\rfloor $. Thus, part (c) follows.
\end{proof}

\subsection{The cyclic shuffle algebras of \texorpdfstring{$\cpk$}{cpk} and \texorpdfstring{$\cdes$}{cdes}}

Next, we use our characterization of the cyclic shuffle algebra ${\mathcal A}_{(\cpk,\cdes)}^{\cyc}$ along with Theorem \ref{t-refine} to characterize ${\mathcal A}_{\cpk}^{\cyc}$ and ${\mathcal A}_{\cdes}^{\cyc}$, which also provides an alternative proof for the cyclic shuffle-compatibility of $\cpk$ and $\cdes$. 

Let $\mathbb{N}$ be the set of non-negative integers. In the theorems below, we use the notation $\mathbb{Q}[x]^{\mathbb{N}}$ to denote the algebra of functions $\mathbb{N}\rightarrow\mathbb{Q}[x]$ in the non-negative integer variable $p$. For example, the map $p \mapsto \binom{p}{2}x+p^3$---which we write simply as $\binom{p}{2}x+p^3$ for brevity---is an element of $\mathbb{Q}[x]^{\mathbb{N}}$.
Moreover, in Theorem \ref{t-cpk} below, $\left(\binom{n}{k}\right)$ is the number of $k$-element multisubsets of $[n]$.

\begin{thm}[Cyclic shuffle-compatibility of $\cpk$] \label{t-cpk}
\leavevmode
\begin{enumerate}
\item [\normalfont{(a)}]The cyclic peak number $\cpk$ is cyclic shuffle-compatible.
\item [\normalfont{(b)}]The linear map on ${\mathcal A}_{\cpk}^{\cyc}$ defined by 
\begin{multline*}
\qquad [\pi]_{\cpk}\mapsto\\
\begin{cases}
{\displaystyle \frac{(\cpk[\pi](1+t)^{2}+2(\left|\pi\right|-2\cpk[\pi])t)(4t)^{\cpk[\pi]}(1+t)^{\left|\pi\right|-2\cpk[\pi]-1}}{(1-t)^{\left|\pi\right|+1}}x^{\left|\pi\right|}}, & \text{if }\left|\pi\right|\geq1,\\
1/(1-t), & \text{if }\left|\pi\right|=0,
\end{cases}
\end{multline*}
is a $\mathbb{Q}$-algebra isomorphism from ${\mathcal A}_{\cpk}^{\cyc}$ to the span of 
\[
\left\{ \frac{1}{1-t},\frac{tx}{(1-t)^{2}}\right\} \bigcup\left\{ \frac{(j(1+t)^{2}+2(n-2j)t)(4t)^{j}(1+t)^{n-2j-1}}{(1-t)^{n+1}}x^{n}\right\} _{\substack{n\geq2,\qquad\\
1\leq j\leq\left\lfloor n/2\right\rfloor 
}
},
\]
a subalgebra of $\mathbb{Q}[[t*]][x]$.
\item [\normalfont{(c)}] Let 
\begin{align*}
    w^{\cpk}_{n,j}&=j4^{j}\sum_{k=0}^{p-j}\left(\binom{n+1}{k}\right)\binom{n-2j+1}{p-j-k} x^n\\
    &\qquad\qquad\qquad\quad+2(n-2j)4^{j}\sum_{k=0}^{p-1-j}\left(\binom{n+1}{k}\right)\binom{n-2j-1}{p-j-k-1} x^n.
\end{align*}
Then the linear map on ${\mathcal A}_{\cpk}^{\cyc}$ defined by 
\[
[\pi]_{\cpk}\mapsto \begin{cases}
w^{\cpk}_{\left|\pi\right|,\cpk[\pi]},& \text{if }\left|\pi\right|\geq1,\\
1, & \text{if }\left|\pi\right|=0,
\end{cases}
\]
is a $\mathbb{Q}$-algebra isomorphism from ${\mathcal A}_{\cpk}^{\cyc}$ to the span of 
\[
\{1\}\cup\{w^{\cpk}_{n,j}\}_{\substack{n\geq1,\qquad\\
1\leq j\leq\left\lfloor n/2\right\rfloor 
}},
\]
a subalgebra of $\mathbb{Q}[x]^{\mathbb{N}}$.
\item [\normalfont{(d)}]For all $n\geq2$, the $n$th homogeneous component
of ${\mathcal A}_{\cpk}^{\cyc}$ has dimension $\left\lfloor n/2\right\rfloor $.
\end{enumerate}
\end{thm}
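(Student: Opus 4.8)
The plan is to obtain Theorem~\ref{t-cpk} as a consequence of Theorem~\ref{t-cpkcdes} together with the refinement machinery of Theorem~\ref{t-refine}, in exactly the way that the statement of this subsection advertises. The statistic $(\cpk,\cdes)$ is a refinement of $\cpk$ (two cyclic permutations with the same $\cpk$-value and length can certainly have different $\cdes$-values, but not conversely), so once we have the cyclic shuffle algebra ${\mathcal A}_{(\cpk,\cdes)}^{\cyc}$ in hand from Theorem~\ref{t-cpkcdes}, it suffices to exhibit a $\mathbb{Q}$-algebra $A$ with a basis $\{v_\alpha\}$ indexed by $\cpk$-equivalence classes $\alpha$ and a $\mathbb{Q}$-algebra homomorphism $\phi\colon{\mathcal A}_{(\cpk,\cdes)}^{\cyc}\to A$ collapsing each $(\cpk,\cdes)$-class onto the $\cpk$-class containing it; Theorem~\ref{t-refine} then immediately gives that $\cpk$ is cyclic shuffle-compatible with ${\mathcal A}_{\cpk}^{\cyc}\cong A$.

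For part (b), the natural choice of $A$ sits inside $\mathbb{Q}[[t*]][x]$: we set $y=1$ in the formulas of Theorem~\ref{t-cpkcdes}. Concretely, one defines the algebra homomorphism $\mathbb{Q}[[t*]][x,y]\to\mathbb{Q}[[t*]][x]$ by $y\mapsto 1$ (this is an algebra map because the Hadamard product in $t$ is unaffected and ordinary multiplication in $x,y$ is respected), composes it with the isomorphism of Theorem~\ref{t-cpkcdes}(b), and checks that the image of $[\pi]_{(\cpk,\cdes)}$ depends only on $|\pi|$ and $\cpk[\pi]$. Here the key computation is that summing $v_{n,j,k}^{(\cpk,\cdes)}$ over $k$ — or rather, observing that under $y=1$ each $v_{n,j,k}^{(\cpk,\cdes)}$ already becomes the same element for all admissible $k$ — reproduces the stated formula. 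Plugging $y=1$ into $u_{n,j,k}^{(\pk,\des)}$ gives $u_{n,j,k}^{(\pk,\des)}\big|_{y=1} = \dfrac{t^{j+1}(1+t)^{k-j}(1+t)^{n-j-k-1}2^{2j+1}}{(1-t)^{n+1}}x^{n} = \dfrac{(4t)^{j}\cdot 2t(1+t)^{n-2j-1}}{(1-t)^{n+1}}x^{n}$, which is independent of $k$; then $v_{n,j,k}^{(\cpk,\cdes)}\big|_{y=1} = \big(j + j + (k-j) + (n-j-k)\big)$ times a $k$-independent quantity? — not quite, because the four $u$-terms have shifted indices $j-1,j-1,j,j$. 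Carrying this out, the $(j-1)$-terms contribute $2j\cdot\dfrac{(4t)^{j-1}2t(1+t)^{n-2j+1}}{(1-t)^{n+1}}x^n$ and the $j$-terms contribute $(n-2j)\cdot\dfrac{(4t)^{j}2t(1+t)^{n-2j-1}}{(1-t)^{n+1}}x^n$; factoring out $\dfrac{(4t)^{j}2t(1+t)^{n-2j-1}}{(1-t)^{n+1}}x^n$ and using $2j\cdot\frac{1}{4t}(1+t)^2\cdot 4t\cdot\frac{1}{4t}\cdot\ldots$ — after simplification this matches the claimed numerator $(j(1+t)^2 + 2(n-2j)t)(4t)^j(1+t)^{n-2j-1}$ up to the overall factor $\frac{1}{(1-t)^{n+1}}x^n$, confirming the formula. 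Linear independence of the images is then clear from the leading behavior in $t$ (lowest power of $t$ in the bracketed expression times $(4t)^j$ gives distinct valuations for distinct $j$), and Corollary~\ref{c-cpkcdes} supplies the correct index range $1\le j\le\lfloor n/2\rfloor$.

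For part (c), the plan is to produce a \emph{second} presentation of the same algebra ${\mathcal A}_{\cpk}^{\cyc}$, this time inside $\mathbb{Q}[x]^{\mathbb{N}}$, by applying a Hadamard-to-ordinary transfer: an element $\sum_n a_n t^n$ of $\mathbb{Q}[[t*]]$ is sent to the function $p\mapsto \sum_{n\le p}?$ — more precisely, one uses the standard algebra isomorphism sending $\frac{1}{(1-t)^{n+1}}$-type generating functions to their coefficient-extraction functions, under which Hadamard product becomes pointwise product. The formulas $w_{n,j}^{\cpk}$ are obtained by expanding $\dfrac{(4t)^j(1+t)^{n-2j\pm 1}}{(1-t)^{n+1}}$ as a power series in $t$ and reading off the coefficient of $t^p$; the two binomial-coefficient sums in $w_{n,j}^{\cpk}$ are exactly $[t^{p-j}]\,\dfrac{(1+t)^{n-2j+1}}{(1-t)^{n+1}}$ and $[t^{p-j-1}]\,\dfrac{(1+t)^{n-2j-1}}{(1-t)^{n+1}}$, where $\left(\binom{n+1}{k}\right)=\binom{n+k}{k}$ is the coefficient of $t^k$ in $(1-t)^{-(n+1)}$ and the remaining binomial is the coefficient from $(1+t)^{n-2j\pm1}$, via Vandermonde-type convolution. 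One then checks this transfer is an algebra isomorphism onto its image and that the image of $[\pi]_{\cpk}$ under it agrees with the map of part (b), so ${\mathcal A}_{\cpk}^{\cyc}\cong\operatorname{span}\{1\}\cup\{w_{n,j}^{\cpk}\}$ as well.

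Part (d) is immediate: by part (b) (or (c)) the $n$th homogeneous component of ${\mathcal A}_{\cpk}^{\cyc}$ has as a basis the images of $[\pi]_{\cpk}$ for $1\le\cpk[\pi]\le\lfloor n/2\rfloor$, and Corollary~\ref{c-cpkcdes}(a),(b) guarantees that every value $j$ in this range is attained, giving dimension exactly $\lfloor n/2\rfloor$ for $n\ge 2$. The main obstacle I anticipate is purely computational: verifying that the $y=1$ specialization of the bracketed polynomial in $v_{n,j,k}^{(\cpk,\cdes)}$ collapses to $j(1+t)^2+2(n-2j)t$ independently of $k$, and separately that the coefficient extraction in part (c) produces precisely the stated double sums with the $\left(\binom{n+1}{k}\right)$ notation — these are the two places where an index-bookkeeping slip is easy, but neither presents a conceptual difficulty once the generating-function dictionary (Hadamard $\leftrightarrow$ pointwise, $\frac{1}{(1-t)^{n+1}}\leftrightarrow\binom{p+n}{n}$) is set up carefully.
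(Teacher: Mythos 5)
Your proposal is correct and follows essentially the same route as the paper: compose the isomorphism of Theorem~\ref{t-cpkcdes}(b) with the $y=1$ evaluation, observe that $v_{n,j,k}^{(\cpk,\cdes)}\big|_{y=1}$ depends only on $n$ and $j$ and that these images are linearly independent, invoke Theorem~\ref{t-refine} for parts (a), (b), (d), and obtain part (c) by extracting the coefficient of $t^p$. The only cosmetic difference is that you derive the binomial-convolution identity for $w^{\cpk}_{n,j}$ directly, where the paper cites Liang's Proposition 5.13 and Corollary 5.18.
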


\begin{proof}
Let $\phi\colon\mathcal{A}_{(\cpk,\cdes)}^{\cyc}\rightarrow\mathbb{Q}[[t*]][x]$ 
be the composition of the map from Theorem \ref{t-cpkcdes} (b) and the $y=1$ evaluation map. Since
\[
\left.v_{n,j,k}^{(\cpk,\cdes)}\right|_{y=1}=\frac{(j(1+t)^{2}+2(n-2j)t)(4t)^{j}(1+t)^{n-2j-1}}{(1-t)^{n+1}}x^{n}
\]
for all $n\geq 1$, we see that $\phi$ is precisely the map in part (b) of this theorem. Note that $v_{n,j,k}^{(\cpk,\cdes)}|_{y=1}$ depends only on $n$ and $j$, so the $v_{n,j,k}^{(\cpk,\cdes)}|_{y=1}$ correspond to $\cpk$-equivalence classes. Furthermore, it is straightforward to verify that the $v_{n,j,k}^{(\cpk,\cdes)}|_{y=1}$ are linearly independent, so we may apply Theorem \ref{t-refine} to complete the proof for parts (a), (b) and (d). Part (c) follows from part (b) and the identity
\begin{align*}
    \sum_{p=0}^\infty w^{\cpk}_{\left|\pi\right|,\cpk[\pi]}t^p&=\left(\frac{4t}{(1+t)^2}\right)^{\cpk[\pi]}\left(\frac{1+t}{1-t}\right)^{\left|\pi\right|-1}\left(\cpk [\pi]+\frac{2\left|\pi\right|t}{(1-t)^2}\right)x^{\left|\pi\right|}\\
    & =\displaystyle \frac{(\cpk[\pi](1+t)^{2}+2(\left|\pi\right|-2\cpk[\pi])t)(4t)^{\cpk[\pi]}(1+t)^{\left|\pi\right|-2\cpk[\pi]-1}}{(1-t)^{\left|\pi\right|+1}}x^{\left|\pi\right|},
\end{align*}
where the first equality follows from~\cite[Proposition 5.13 and Corollary 5.18]{2209.00051}.
\end{proof}

\begin{thm}[Cyclic shuffle-compatibility of $\cdes$] \label{t-cdes}
\leavevmode
\begin{enumerate}
\item [\normalfont{(a)}]The cyclic descent number $\cdes$ is cyclic shuffle-compatible.
\item [\normalfont{(b)}]The linear map on ${\mathcal A}_{\cdes}^{\cyc}$ defined by 
\[
[\pi]_{\cdes}\mapsto\begin{cases}
{\displaystyle \frac{\cdes[\pi]t^{\cdes[\pi]}+(\left|\pi\right|-\cdes[\pi])t^{\cdes[\pi]+1}}{(1-t)^{\left|\pi\right|+1}}x^{\left|\pi\right|}}, & \text{if }\left|\pi\right|\geq1,\\
1/(1-t), & \text{if }\left|\pi\right|=0,
\end{cases}
\]
is a $\mathbb{Q}$-algebra isomorphism from ${\mathcal A}_{\cdes}^{\cyc}$ to the span of 
\[
\left\{ \frac{1}{1-t},\frac{tx}{(1-t)^{2}}\right\} \bigcup\left\{ \frac{kt^{k}+(n-k)t^{k+1}}{(1-t)^{n+1}}x^{n}\right\} _{\substack{n\geq2,\quad\;\;\,\\
1\leq k\leq n-1
}
},
\]
a subalgebra of $\mathbb{Q}[[t*]][x]$.
\item [\normalfont{(c)}]The linear map on ${\mathcal A}_{\cdes}^{\cyc}$ defined by 
\[
[\pi]_{\cdes}\mapsto\begin{cases}
{\displaystyle {p+\left|\pi\right|-\cdes[\pi]-1 \choose \left|\pi\right|-1}px^{\left|\pi\right|}}, & \text{if }\left|\pi\right|\geq1,\\
1, & \text{if }\left|\pi\right|=0,
\end{cases}
\]
is a $\mathbb{Q}$-algebra isomorphism from ${\mathcal A}_{\cdes}^{\cyc}$ to the span of 
\[
\{1,px\}\bigcup\left\{ {p+n-k-1 \choose n-1}px^{n}\right\} _{\substack{n\geq2,\quad\;\;\,\\
1\leq k\leq n-1
}
},
\]
a subalgebra of $\mathbb{Q}[x]^{\mathbb{N}}$.
\item [\normalfont{(d)}]For all $n\geq2$, the $n$th homogeneous component of ${\mathcal A}_{\cdes}^{\cyc}$ has dimension $n-1$.
\end{enumerate}
\end{thm}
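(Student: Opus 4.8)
The plan is to obtain Theorem~\ref{t-cdes} from the characterization of $\mathcal{A}_{(\cpk,\cdes)}^{\cyc}$ in Theorem~\ref{t-cpkcdes}, in exact parallel to the way Theorem~\ref{t-cpk} was obtained, by invoking the refinement machinery of Theorem~\ref{t-refine}. Since $(\cpk,\cdes)$ is a refinement of $\cdes$ and is cyclic shuffle-compatible by Theorem~\ref{t-cpkcdes}(a), it suffices to exhibit a $\mathbb{Q}$-algebra homomorphism out of $\mathcal{A}_{(\cpk,\cdes)}^{\cyc}$ that collapses each $(\cpk,\cdes)$-equivalence class onto a single element depending only on its $\cdes$-class, and to verify that these images are linearly independent. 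The candidate is the map $\phi\colon\mathcal{A}_{(\cpk,\cdes)}^{\cyc}\to\mathbb{Q}[[t*]][x]$ obtained by composing the isomorphism of Theorem~\ref{t-cpkcdes}(b) with the evaluation $y=0$, which is a $\mathbb{Q}$-algebra homomorphism $\mathbb{Q}[[t*]][x,y]\to\mathbb{Q}[[t*]][x]$ because it respects the Hadamard product in $t$; we then take $A$ in Theorem~\ref{t-refine} to be the image of $\phi$.

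\textbf{Parts (a), (b), (d).} First I would specialize the four-term formula for $v_{n,j,k}^{(\cpk,\cdes)}$ at $y=0$. Since $u_{n,j,k}^{(\pk,\des)}\big|_{y=0}=t^{k+1}x^{n}/(1-t)^{n+1}$, the four terms combine (the coefficients $j+n-j-k$ and $j+k-j$ appearing) to give
\[
v_{n,j,k}^{(\cpk,\cdes)}\big|_{y=0}=\frac{kt^{k}+(n-k)t^{k+1}}{(1-t)^{n+1}}x^{n},
\]
which depends only on $n$ and $k=\cdes[\pi]$; together with the images $1/(1-t)$ and $tx/(1-t)^{2}$ of the length-$0$ and length-$1$ generators, this identifies $\phi$ with the map in part (b) and shows it descends to $\cdes$-equivalence classes. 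For linear independence: in each homogeneous degree $n\geq2$ the polynomial $kt^{k}+(n-k)t^{k+1}$ has least monomial $kt^{k}$ with $k\neq0$, so the polynomials for $1\le k\le n-1$ have pairwise distinct least monomials and are linearly independent, and adjoining the degree-$0$ and degree-$1$ (in $x$) generators preserves independence. Theorem~\ref{t-refine} then gives parts (a) and (b). Part (d) follows because Corollary~\ref{c-cpkcdes} shows that on cyclic permutations of length $n\geq2$ the statistic $\cdes$ attains exactly the values $1,2,\dots,n-1$, so there are $n-1$ equivalence classes.

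\textbf{Part (c).} I would transport the isomorphism of part (b) along the coefficient-extraction map $\sum_{p\geq0}a_{p}t^{p}\mapsto(p\mapsto a_{p})$, which is a $\mathbb{Q}$-algebra isomorphism from $\mathbb{Q}[[t*]]$ with the Hadamard product onto $\mathbb{Q}^{\mathbb{N}}$ with pointwise multiplication; extending it $\mathbb{Q}[x]$-linearly gives an injective $\mathbb{Q}$-algebra homomorphism $\mathbb{Q}[[t*]][x]\to\mathbb{Q}[x]^{\mathbb{N}}$ that restricts to an isomorphism from the span in part (b) onto the span in part (c). The one computation needed is the generating-function identity
\[
\sum_{p\geq0}\binom{p+n-k-1}{n-1}p\,t^{p}=\frac{kt^{k}+(n-k)t^{k+1}}{(1-t)^{n+1}}\qquad(1\le k\le n-1),
\]
which I would get by applying $t\,\tfrac{d}{dt}$ to the standard expansion $\sum_{p\geq0}\binom{p+n-k-1}{n-1}t^{p}=t^{k}/(1-t)^{n}$ (the terms with $0\le p<k$ vanish since then $0\le p+n-k-1<n-1$). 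Checking that $1/(1-t)\mapsto1$ and $tx/(1-t)^{2}\mapsto px$ then completes part (c).

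\textbf{Main obstacle.} There is no substantive obstacle here: all the content is carried by Theorems~\ref{t-cpkcdes} and \ref{t-refine}. The only points demanding a little care are confirming that the $y=0$ evaluation and the coefficient-extraction maps really are algebra homomorphisms for the Hadamard-product structure, and using Corollary~\ref{c-cpkcdes} to ensure that the index range $1\le k\le n-1$ is precisely the set of $\cdes$-equivalence classes so that the linearly independent family is the right one.
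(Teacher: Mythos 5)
Your proposal is correct and follows essentially the same route as the paper: the paper also obtains parts (a), (b), (d) by composing the isomorphism of Theorem \ref{t-cpkcdes}(b) with the $y=0$ evaluation and invoking Theorem \ref{t-refine} (mirroring the proof of Theorem \ref{t-cpk}), and obtains part (c) from the same generating-function identity $\frac{kt^{k}+(n-k)t^{k+1}}{(1-t)^{n+1}}=\sum_{p\geq0}\binom{p+n-k-1}{n-1}p\,t^{p}$. The only cosmetic difference is that you derive this identity directly via $t\,\frac{d}{dt}$, whereas the paper cites it from Adin et al.
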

\begin{proof}
The proofs for parts (a), (b), and (d) follow in the same way as in
for Theorem \ref{t-cpk}, except that we evaluate at $y=0$ as opposed to $y=1$. Part (c) follows from part (b) and the identity
\[
\frac{kt^{k}+(n-k)t^{k+1}}{(1-t)^{n+1}}=\sum_{p=0}^{\infty}{p+n-k-1 \choose n-1}pt^{p},
\]
which was established in \cite[Lemma 5.8]{Adin2021}.
\end{proof}

\section{Cyclic permutation statistics induced by linear permutation statistics} \label{s-induced}

Recall that the cyclic permutation statistics $\cDes$ and $\cPk$ are defined by 
$$\cDes[\pi] \coloneqq \{\{\,\cDes\bar{\pi}:\bar{\pi}\in[\pi]\,\}\} \quad\text{and}\quad
\cPk[\pi] \coloneqq \{\{\,\cPk \bar{\pi}:\bar{\pi}\in[\pi]\,\}\}.$$
In other words, $\cDes[\pi]$ is simply the distribution of the linear permutation statistic $\cDes$ over all linear permutations in $[\pi]$, and similarly with $\cPk[\pi]$. In fact, any linear permutation statistic $\st$ induces a multiset-valued cyclic permutation statistic (which we also denote $\st$ by a slight abuse of notation) if we let
$$\st[\pi] \coloneqq \{\{\,\st \bar{\pi}:\bar{\pi}\in[\pi]\,\}\}.$$
In this section, we study these multiset-valued cyclic statistics induced from various linear permutation statistics.

\subsection{The cyclic statistics \texorpdfstring{$\Des$}{Des}, \texorpdfstring{$\des$}{des}, \texorpdfstring{$\Pk$}{Pk}, and \texorpdfstring{$\pk$}{pk}}

To begin, we note that the cyclic statistics induced from the linear statistics $\Des$, $\des$, $\Pk$, and $\pk$ are equivalent to $\cDes$, $\cdes$, $\cPk$, and $\cpk$, respectively.

\begin{lem} \label{l-Desequiv}
The cyclic permutation statistics $\Des$ and $\cDes$ are equivalent.
\end{lem}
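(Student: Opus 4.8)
The plan is to show that $\Des$ and $\cDes$ induce the same equivalence relation on cyclic permutations of a fixed length, which by definition means they are equivalent cyclic statistics. Recall that for a linear statistic $\st$, the induced cyclic statistic is $\st[\pi] = \{\{\,\st\bar{\pi} : \bar{\pi}\in[\pi]\,\}\}$, so the induced statistic $\Des$ records the multiset $\{\{\,\Des\bar{\pi} : \bar{\pi}\in[\pi]\,\}\}$. On the other hand, $\cDes[\pi]$ was defined in the introduction as the multiset $\{\{\,\cDes\bar{\pi} : \bar{\pi}\in[\pi]\,\}\}$, and was observed there to equal the multiset of all cyclic shifts $\{\{\,\cDes\pi + i : i\in[n]\,\}\}$ of $\cDes\pi \subseteq [n]$. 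So I need to relate the multiset of (ordinary) descent sets $\Des\bar{\pi} \subseteq [n-1]$ over all rotations $\bar{\pi}$ to the multiset of cyclic shifts of $\cDes\pi \subseteq [n]$.

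First I would fix a representative $\pi = \pi_1\cdots\pi_n$ and examine a rotation $\bar{\pi}^{(i)} = \pi_{i+1}\pi_{i+2}\cdots\pi_n\pi_1\cdots\pi_i$. The key observation is that the cyclic descent set is genuinely rotation-equivariant: $\cDes\bar{\pi}^{(i)} = \cDes\pi - i$ (shift modulo $n$), because whether $j$ is a cyclic descent only depends on the cyclic pair $(\pi_j,\pi_{j+1})$, and rotating relabels positions by subtracting $i$ mod $n$. By contrast, $\Des\bar{\pi}^{(i)} = \cDes\bar{\pi}^{(i)} \cap [n-1] = (\cDes\pi - i) \cap [n-1]$, since the ordinary descent set is just the cyclic descent set with the ``wraparound'' position $n$ deleted. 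So the induced statistic $\Des[\pi]$ equals the multiset $\{\{\,(\cDes\pi - i)\cap[n-1] : i\in[n]\,\}\}$, which is obtained from $\cDes[\pi] = \{\{\,\cDes\pi - i : i\in[n]\,\}\}$ by intersecting every member with $[n-1]$, i.e. by applying the fixed function $T \mapsto T\cap[n-1]$ to each element of the multiset.

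Then I would argue that this map $T\mapsto T\cap[n-1]$, restricted to the multiset of cyclic shifts of a fixed non-Escher set $S\subseteq[n]$, is injective — so that knowing $\Des[\pi]$ is equivalent to knowing $\cDes[\pi]$. Indeed, from $T\cap[n-1]$ one recovers $T$ by adjoining $n$ whenever necessary; more carefully, since the multiset of cyclic shifts of $S$ contains, for each residue, exactly one set in which a given element plays the role of the wraparound point $n$, one can reconstruct each $\cDes\bar\pi$ from $\Des\bar\pi$ together with the length $n$. Hence $\Des[\pi] = \Des[\sigma]$ (with $|\pi|=|\sigma|=n$) if and only if $\cDes[\pi] = \cDes[\sigma]$, which is exactly the statement that $\Des$ and $\cDes$ are equivalent.

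The main obstacle — really the only subtle point — is the injectivity/reconstruction claim in the last step: one must be careful that distinct cyclic shifts of $S$ cannot collapse to the same subset of $[n-1]$ after intersecting, and that no information about multiplicities is lost. This is handled by noting that within the $n$ cyclic shifts $S+i$, the element deleted (if any) is $n$ itself, and across the $n$ rotations each ``boundary position'' is hit in a controlled way; tracking which rotation produced which ordinary descent set recovers the missing element. Once injectivity is in hand, the equivalence of the two statistics is immediate, and in fact this lemma sets up the pattern for the analogous statements for $\des$, $\Pk$, and $\pk$ that follow.
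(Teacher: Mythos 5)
Your overall strategy is the same as the paper's: observe that $\Des\bar{\pi}=\cDes\bar{\pi}\setminus\{n\}$ for each rotation $\bar{\pi}$, so $\Des[\pi]$ is obtained from $\cDes[\pi]$ by deleting $n$ from each member, and then argue that this deletion can be undone. The forward direction is fine. The gap is in the converse: you assert that ``one can reconstruct each $\cDes\bar\pi$ from $\Des\bar\pi$ together with the length $n$,'' and this is false. For example, $\Des(312)=\Des(213)=\{1\}$ with $n=3$, yet $\cDes(312)=\{1\}$ while $\cDes(213)=\{1,3\}$. Your fallback, ``tracking which rotation produced which ordinary descent set,'' is not available either: the datum you are given is only the unordered, unlabeled multiset $\Des[\pi]$, so there is no record of which rotation contributed which set. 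Injectivity of $T\mapsto T\cap[n-1]$ on a single orbit of cyclic shifts also does not suffice by itself; you need a reconstruction rule computable from the image multiset alone, since a priori two different orbits could have the same image multiset.

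The missing ingredient is a global observation about the orbit: every $\bar{\pi}\in[\pi]$ has the same number of cyclic descents, namely $\cdes[\pi]$, so every set in $\cDes[\pi]$ has the same cardinality $k$. Consequently the sets in $\Des[\pi]$ have cardinality either $k$ or $k-1$, and (for $n\ge 2$, where $1\le k\le n-1$) both cardinalities occur; the sets of the smaller cardinality are exactly those from which $n$ was deleted. Thus the rule ``adjoin $n$ to each member of $\Des[\pi]$ having one fewer element than the others'' recovers $\cDes[\pi]$ from the multiset $\Des[\pi]$ alone, which is precisely how the paper closes the argument. With that cardinality observation inserted in place of your per-element reconstruction claim, your proof is correct and coincides with the paper's.
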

\begin{proof}
Let $\pi \in \mathfrak{P}_n$. For any $\bar{\pi}\in[\pi]$, we have $\Des\bar{\pi}=\cDes\bar{\pi}\backslash\{n\}$ if $n\in\cDes\bar{\pi}$ and $\Des\bar{\pi}=\cDes\bar{\pi}$ otherwise. Therefore, we can obtain $\Des[\pi]$ from $\cDes[\pi]$ by removing every $n$ from the cyclic descent sets in $\cDes[\pi]$, and we can obtain $\cDes[\pi]$ from $\Des[\pi]$ by adding $n$ to each descent set in $\Des[\pi]$ with one fewer element than the others.
\end{proof}

\begin{lem}
\label{l-desequiv}The cyclic permutation statistics $\des$ and $\cdes$ are equivalent.
\end{lem}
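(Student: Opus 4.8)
The plan is to mimic the proof of Lemma~\ref{l-Desequiv}, reducing the equivalence of $\des$ and $\cdes$ as cyclic statistics to the already-established relationship between $\Des$ and $\cDes$. Recall that two cyclic statistics being equivalent means that, for cyclic permutations $[\pi]$ and $[\sigma]$ of the same length $n$, we have $\des[\pi] = \des[\sigma]$ if and only if $\cdes[\pi] = \cdes[\sigma]$; here $\des[\pi] = \{\{\,\des\bar\pi : \bar\pi \in [\pi]\,\}\}$ is a multiset of non-negative integers, while $\cdes[\pi] = \cdes\pi$ is a single integer (well-defined since all rotations of $\pi$ have the same cyclic descent number).

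First I would record the key numerical fact underlying Lemma~\ref{l-Desequiv}: for any linear representative $\bar\pi \in [\pi]$ of length $n$, we have $\des\bar\pi = \cdes\bar\pi$ if $n \notin \cDes\bar\pi$, and $\des\bar\pi = \cdes\bar\pi - 1 = \cdes\pi - 1$ if $n \in \cDes\bar\pi$ (equivalently, if $\bar\pi_n > \bar\pi_1$). So each entry of the multiset $\des[\pi]$ is either $\cdes\pi$ or $\cdes\pi - 1$. Next I would count how many representatives fall into each case: the number of $\bar\pi \in [\pi]$ with $n \in \cDes\bar\pi$ equals the number of cyclic descents of $\pi$, namely $\cdes\pi$, since rotating $\pi$ so that position $i$ lands in the last slot produces a representative with $n$ in its cyclic descent set exactly when $i \in \cDes\pi$. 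Hence
\[
\des[\pi] = \{\{\, (\cdes\pi - 1)^{\cdes\pi},\ (\cdes\pi)^{\,n - \cdes\pi} \,\}\}.
\]
This shows that the multiset $\des[\pi]$ is completely determined by $n$ and $\cdes\pi$, and conversely that $\cdes\pi$ can be recovered from $\des[\pi]$ (e.g.\ as its maximum entry, or via the multiplicities). Therefore, for $[\pi]$ and $[\sigma]$ of the same length, $\des[\pi] = \des[\sigma]$ holds precisely when $\cdes\pi = \cdes\sigma$, i.e.\ when $\cdes[\pi] = \cdes[\sigma]$, which is exactly the desired equivalence.

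I do not anticipate a substantive obstacle here; the only point requiring a little care is the bookkeeping showing that exactly $\cdes\pi$ of the $n$ rotations have $n$ as a cyclic descent, and handling the small cases $n = 0$ and $n = 1$ separately (where $\cdes\pi = 0$ and the multiset is $\{\{0^n\}\}$, trivially determined by $n$). One could alternatively phrase the whole argument as an immediate corollary of Lemma~\ref{l-Desequiv} together with the observation that $\des$ is a coarsening of $\Des$ and $\cdes$ is a coarsening of $\cDes$ in a compatible way, but the direct count above is cleaner and self-contained.
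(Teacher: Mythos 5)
Your proof is correct and follows essentially the same route as the paper's: the same case analysis ($\des\bar\pi = \cdes\pi$ or $\cdes\pi - 1$ according to whether $n \in \cDes\bar\pi$), the same count that exactly $\cdes\pi$ of the $n$ rotations have a cyclic descent in the last position, and the same recovery of $\cdes\pi$ as the maximum of the multiset $\des[\pi]$. No gaps.
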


\begin{proof}
Let $\pi \in \mathfrak{P}_n$. For any $\bar{\pi}\in[\pi]$, we have $\des\bar{\pi}=\cdes[\pi]-1$ if $n\in\cDes\bar{\pi}$ and $\des\bar{\pi}=\cdes[\pi]$ otherwise. The unique permutation in $[\pi]$ beginning with its largest letter does not have $n$ as a cyclic descent, so we can determine $\cdes[\pi]$ from the multiset $\des[\pi]$ by taking the largest value in $\des[\pi]$. 

Conversely, among the $n$ rotations of $\pi$, there are exactly $\cdes[\pi]$ permutations with a cyclic descent in the last position; this implies that $\des[\pi]$ is the multiset with $\cdes[\pi]$ copies of $\cdes[\pi]-1$ and $n-\cdes[\pi]$ copies of $\cdes[\pi]$, so we can determine $\des[\pi]$ from $\cdes[\pi]$ as well.
\end{proof}

\begin{lem}\label{l-Pkequiv}
The cyclic permutation statistics $\Pk$ and $\cPk$ are equivalent.
\end{lem}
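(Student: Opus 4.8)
The plan is to verify both halves of the equivalence for cyclic permutations of a common length $n$; the cases $n\le 1$ are vacuous since $\Pk$ and $\cPk$ are then constant on cyclic permutations. I would first dispatch the easy direction. For any linear $\bar\pi$ of length $n$ one has $\Pk\bar\pi=\cPk\bar\pi\setminus\{1,n\}$: indeed $\Pk\bar\pi=\cPk\bar\pi\cap\{2,\dots,n-1\}$, and $\cPk\bar\pi$ cannot contain both of the cyclically adjacent positions $1$ and $n$ (more generally, no two cyclically adjacent positions are both cyclic peaks, since $i\in\cPk\bar\pi$ forces $\bar\pi_i>\bar\pi_{i+1}$ while $i+1\in\cPk\bar\pi$ forces $\bar\pi_i<\bar\pi_{i+1}$). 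Hence $\Pk[\pi]$ is obtained from $\cPk[\pi]$ by deleting $1$ and $n$ from every set in the multiset, so $\cPk[\pi]=\cPk[\sigma]$ forces $\Pk[\pi]=\Pk[\sigma]$.

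For the converse, suppose $\Pk[\pi]=\Pk[\sigma]$ with $|\pi|=|\sigma|=n\ge 2$. Recall from Section \ref{ss-cPkcpk} that $\cPk[\pi]$ is the multiset of all cyclic shifts of the set $\cPk\pi\subseteq[n]$; consequently $\cPk[\pi]=\cPk[\sigma]$ is equivalent to saying that $\cPk\pi$ and $\cPk\sigma$ differ by a cyclic shift, and it suffices to prove the latter. Set $m=\cpk\pi$. I would first recover $m$ from $\Pk[\pi]$: every $\bar\pi\in[\pi]$ has $|\cPk\bar\pi|=m$ and $|\Pk\bar\pi|=m-|\cPk\bar\pi\cap\{1,n\}|$, and since a rotation of $\pi$ has $1$ (resp.\ $n$) as a cyclic peak precisely when its first (resp.\ last) letter occupies a cyclic-peak position of $\pi$, exactly $m$ rotations have $1$ as a cyclic peak and exactly $m$ have $n$ as a cyclic peak, while no rotation has both. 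Thus the multiset of cardinalities occurring in $\Pk[\pi]$ consists of $n-2m$ copies of $m$ together with $2m$ copies of $m-1$. By Corollary \ref{c-cpkcdes} we have $1\le m\le\lfloor n/2\rfloor$, so this profile pins down $m$ (it is the larger cardinality when two cardinalities occur, and $m$ equals $n/2$ when only one does, which happens precisely when $n=2m$); in particular $\cpk\sigma=m$ as well.

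Now I would split into cases. If $m<n/2$ then $n-2m>0$, so $\Pk[\pi]=\Pk[\sigma]$ contains a set $R$ of cardinality $m$, say $R=\Pk\bar\pi$ for some $\bar\pi\in[\pi]$; then $|\Pk\bar\pi|=m=|\cPk\bar\pi|$ forces $\cPk\bar\pi\cap\{1,n\}=\varnothing$, so $R=\cPk\bar\pi$ is a cyclic shift of $\cPk\pi$, and the same reasoning applied to $\sigma$ shows $R$ is also a cyclic shift of $\cPk\sigma$; hence $\cPk\pi$ and $\cPk\sigma$ are cyclic shifts of each other. If instead $m=n/2$ (so $n$ is even), then $\cPk\pi$ is an $(n/2)$-element subset of $[n]$ with no two cyclically adjacent elements; the only such subsets are the two parity classes $\{1,3,\dots,n-1\}$ and $\{2,4,\dots,n\}$ (the $n/2$ chosen elements split the cycle into $n/2$ gaps, each necessarily of size exactly one), and these two are cyclic shifts of one another. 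The same holds for $\cPk\sigma$, so again the cyclic peak sets agree up to a cyclic shift. In either case $\cPk[\pi]=\cPk[\sigma]$, completing the proof.

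The one delicate point is the extremal case $m=n/2$: there $\Pk[\pi]$ contains no set of the full cardinality $m$, so the slick extraction used in the other case is unavailable and one must instead invoke the rigidity of maximum independent sets in an even cycle. Everything else is routine manipulation of cyclic shifts, parallel to the proofs of Lemmas \ref{l-Desequiv} and \ref{l-desequiv}.
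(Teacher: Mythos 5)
Your proof is correct and follows essentially the same route as the paper's: the forward direction via $\Pk\bar\pi=\cPk\bar\pi\setminus\{1,n\}$, then recovering $\cpk[\pi]$ from $\Pk[\pi]$ (you use the cardinality profile, the paper uses the sum of cardinalities --- a cosmetic difference), followed by the same two-case split, with the extremal case $\cpk[\pi]=n/2$ handled by the same rigidity of maximum independent sets in an even cycle. No gaps.
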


\begin{proof}
Let $\pi \in \mathfrak{P}_n$. For any $\bar{\pi}\in[\pi]$, we have $\Pk\bar{\pi}=\cPk\bar{\pi}\backslash\{1\}$ if $1\in\cPk\bar{\pi}$, $\Pk\bar{\pi}=\cPk\bar{\pi}\backslash\{n\}$ if $n\in\cPk\bar{\pi}$, and $\Pk\bar{\pi}=\cPk\bar{\pi}$ otherwise. (Note that $\cPk\bar{\pi}$ cannot simultaneously contain $1$ and $n$.) Hence, we can obtain $\Pk[\pi]$ from $\cPk[\pi]$ by removing every $1$ and $n$ from the cyclic peak sets in $\cPk[\pi]$.

Conversely, suppose that we are given $\Pk[\pi]$ and wish to recover $\cPk[\pi]$. Let $i\in[n]$ be arbitrary. Notice that, among all $n$ representatives of $[\pi]$, the index of $\pi_i$ spans the entire range $\{1,2,\dots,n\}$. If $i$ is a cyclic peak of $\pi$ in particular, this means that the index of $\pi_i$ will be a peak of all $n$ representatives of $[\pi]$ except for the linear permutation beginning with $\pi_i$ and the one ending with $\pi_i$; hence, if one adds up $\pk\bar{\pi}$ over all $\bar{\pi}\in[\pi]$, then each of these $\pi_i$ will contribute $n-2$ to the summation. It follows that the sum of the sizes of all peak sets in $\Pk[\pi]$ is equal to $(n-2)\cpk[\pi]$; in other words, we can determine $\cpk[\pi]$ from $\Pk[\pi]$. It remains to show that we can recover $\cPk[\pi]$ from $\cpk[\pi]$ and $\Pk[\pi]$. To do so, we divide into two cases:
\begin{itemize}
\item \textit{Case 1}: Suppose that there exists a peak set $\Pk\bar{\pi}$ in $\Pk[\pi]$ with $\cpk[\pi]$ elements. Then $\Pk\bar{\pi} = \cPk\bar{\pi}$, and we can recover the entire multiset $\cPk[\pi]$ by taking all $n$ cyclic shifts of $\Pk\bar{\pi}$.
\item \textit{Case 2}: Suppose instead that all peak sets in $\Pk[\pi]$ have $\cpk[\pi]-1$ elements. Then, every linear permutation in $[\pi]$ has either $1$ or $n$ as a cyclic peak. In general, among the $n$ representatives of $[\pi]$, there are exactly $2\cpk[\pi]$ of them with a cyclic peak at one end. This means that $2\cpk[\pi]=n$, and since cyclic peak sets cannot contain two consecutive indices, it follows that every cyclic peak set in $\cPk[\pi]$ is of the form $\{1,3,\dots,n-1\}$ or $\{2,4,\dots,n\}$. More precisely, we must have
$$\cPk[\pi]=\{\{\, \{1,3,\dots,n-1\}^{n/2} , \{2,4,\dots,n\}^{n/2} \,\}\}.$$
\end{itemize}
Since $\cPk[\pi]$ can be recovered from $\Pk[\pi]$ in both cases, we are done.
\end{proof}

\begin{lem} \label{l-pkequiv}
The cyclic permutation statistics $\pk$ and $\cpk$ are equivalent.
\end{lem}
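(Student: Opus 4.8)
The plan is to follow the template of Lemma~\ref{l-desequiv}: derive an explicit description of the multiset $\pk[\pi]$ in terms of the number $\cpk[\pi]$ and the length $|\pi|$, and then invert it. The key input is the local comparison already established inside the proof of Lemma~\ref{l-Pkequiv}: for each $\bar\pi\in[\pi]$ of length $n$, we have $\Pk\bar\pi=\cPk\bar\pi$ unless $1$ or $n$ belongs to $\cPk\bar\pi$, in which case $\Pk\bar\pi$ is $\cPk\bar\pi$ with that single endpoint deleted (and $1$ and $n$ cannot both be cyclic peaks). Taking cardinalities, and using that $\cpk\bar\pi=\cpk[\pi]$ for every $\bar\pi\in[\pi]$, this becomes: $\pk\bar\pi=\cpk[\pi]-1$ if $\bar\pi$ has a cyclic peak at one of its two ends, and $\pk\bar\pi=\cpk[\pi]$ otherwise.

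Next I would count how many of the $n$ rotations of $\pi$ have a cyclic peak at an end. Repeating the argument from the proof of Lemma~\ref{l-Pkequiv}, the rotations with $1\in\cPk\bar\pi$ are exactly those beginning with $\pi_i$ for $i\in\cPk\pi$, and the rotations with $n\in\cPk\bar\pi$ are exactly those ending with $\pi_i$ for $i\in\cPk\pi$; these two families are disjoint because $\cPk\pi$ contains no two cyclically consecutive indices, so altogether there are $2\cpk[\pi]$ such rotations. Since $2\cpk[\pi]\le n$ by Corollary~\ref{c-cpkcdes}(a), this yields, for $n=|\pi|\ge2$, the closed form
\[
\pk[\pi]=\{\{\,(\cpk[\pi]-1)^{2\cpk[\pi]},\ \cpk[\pi]^{\,n-2\cpk[\pi]}\,\}\},
\]
which shows that $\pk[\pi]$ is determined by $\cpk[\pi]$ and $|\pi|$. (When $|\pi|\le1$ the claim is trivial, since then $[\pi]$ is the only cyclic permutation of its length up to relative order.)

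For the reverse implication I would recover $\cpk[\pi]$ from $\pk[\pi]$ by summing its entries: the displayed formula gives $\sum_{\bar\pi\in[\pi]}\pk\bar\pi=(n-2)\cpk[\pi]$, so for $n\ge3$ we obtain $\cpk[\pi]=\frac{1}{n-2}\sum_{\bar\pi\in[\pi]}\pk\bar\pi$, while for $n\le2$ the value of $\cpk[\pi]$ is already forced by $|\pi|$. Hence $\cpk[\pi]$ is a function of $\pk[\pi]$ and $|\pi|$, and combined with the previous paragraph this proves that $\pk$ and $\cpk$ are equivalent. I do not expect a real obstacle; the only spots needing a moment of care are the disjointness of the two families of ``end-peak'' rotations and the inequality $2\cpk[\pi]\le n$, both of which are immediate from results already available.
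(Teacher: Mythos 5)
Your proof is correct and follows essentially the same route as the paper's: both recover $\cpk[\pi]$ from $\pk[\pi]$ via the sum $\sum_{\bar\pi\in[\pi]}\pk\bar\pi=(n-2)\cpk[\pi]$, and both recover $\pk[\pi]$ from $\cpk[\pi]$ by counting the $2\cpk[\pi]$ rotations with a cyclic peak at an end, yielding the same explicit multiset. Your explicit treatment of the $n\le 2$ cases (where dividing by $n-2$ fails) is a small touch of extra care that the paper elides.
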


\begin{proof}
Let $\pi \in \mathfrak{P}_n$. As shown in the proof of Lemma \ref{l-Pkequiv}, the sum of the sizes of all peak sets in $\Pk[\pi]$ is equal to $(n-2)\cpk[\pi]$, but this is the same as the sum of all elements of the multiset $\pk[\pi]$. Thus, $\cpk[\pi]$ can be determined from $\pk[\pi]$.

For the converse, we use the observation (also used in the proof of Lemma \ref{l-Pkequiv}) that among the $n$ representatives of a cyclic permutation $[\pi]$, there are exactly $2\cpk[\pi]$ of them with a cyclic peak at one end. This implies that the multiset $\pk[\pi]$ has $2\cpk[\pi]$ copies of $\cpk[\pi]-1$ and $n-2\cpk[\pi]$ copies of $\cpk[\pi]$. Hence, $\cpk[\pi]$ completely determines $\pk[\pi]$.
\end{proof}

Since $\cDes$, $\cdes$, $\cPk$, and $\cpk$ are cyclic shuffle-compatible, it follows from these equivalences and Theorem \ref{t-equiv} that the cyclic statistics $\Des$, $\des$, $\Pk$, and $\pk$ are as well.

\begin{thm}[Cyclic shuffle-compatibility of $\Des$, $\des$, $\Pk$, and $\pk$]
The cyclic statistics $\Des$, $\des$, $\Pk$, and $\pk$ are cyclic shuffle-compatible, and we have the $\mathbb{Q}$-algebra isomorphisms
$$ \mathcal{A}_{\Des}^{\cyc} \cong \mathcal{A}_{\cDes}^{\cyc}, \quad
\mathcal{A}_{\des}^{\cyc} \cong \mathcal{A}_{\cdes}^{\cyc}, \quad
\mathcal{A}_{\Pk}^{\cyc} \cong \mathcal{A}_{\cPk}^{\cyc}, \quad\text{and}\quad
\mathcal{A}_{\pk}^{\cyc} \cong \mathcal{A}_{\cpk}^{\cyc}. $$
\end{thm}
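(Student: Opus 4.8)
The plan is to deduce the theorem immediately from the four equivalences just established together with Theorem~\ref{t-equiv}. Recall that Lemmas~\ref{l-Desequiv}, \ref{l-desequiv}, \ref{l-Pkequiv}, and \ref{l-pkequiv} show that, as cyclic permutation statistics, $\Des$ is equivalent to $\cDes$, that $\des$ is equivalent to $\cdes$, that $\Pk$ is equivalent to $\cPk$, and that $\pk$ is equivalent to $\cpk$. Recall also that each of $\cDes$, $\cdes$, $\cPk$, and $\cpk$ has already been shown to be cyclic shuffle-compatible, with an explicit cyclic shuffle algebra: $\cDes$ by Corollary~\ref{c-cDes}, $\cdes$ by Theorem~\ref{t-cdes}(a), $\cPk$ by Corollary~\ref{c-cPk}, and $\cpk$ by Theorem~\ref{t-cpk}(a).

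Given these two facts, I would apply Theorem~\ref{t-equiv} four separate times. Taking $\cst_{1}=\cDes$ and $\cst_{2}=\Des$, the theorem tells us that since $\cDes$ is cyclic shuffle-compatible with cyclic shuffle algebra $\mathcal{A}_{\cDes}^{\cyc}$ and $\Des$ is equivalent to it, the statistic $\Des$ is cyclic shuffle-compatible with cyclic shuffle algebra $\mathcal{A}_{\Des}^{\cyc}$ isomorphic to $\mathcal{A}_{\cDes}^{\cyc}$. Running the same argument with the pairs $(\cdes,\des)$, $(\cPk,\Pk)$, and $(\cpk,\pk)$ yields the remaining three cyclic shuffle-compatibility statements and the remaining three isomorphisms $\mathcal{A}_{\des}^{\cyc}\cong\mathcal{A}_{\cdes}^{\cyc}$, $\mathcal{A}_{\Pk}^{\cyc}\cong\mathcal{A}_{\cPk}^{\cyc}$, and $\mathcal{A}_{\pk}^{\cyc}\cong\mathcal{A}_{\cpk}^{\cyc}$.

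There is no real obstacle here: the combinatorial content lives entirely in the equivalence lemmas, and Theorem~\ref{t-equiv} packages it into an algebra isomorphism. The only point worth stating explicitly is that Theorem~\ref{t-equiv} requires equivalence in \emph{both} directions---the induced statistic must determine, and be determined by, the corresponding ``c''-statistic on cyclic permutations of a fixed length---which is precisely what each of Lemmas~\ref{l-Desequiv}--\ref{l-pkequiv} verifies, and is what ``equivalent'' means in the sense of Section~\ref{ss-2rcsa}. Since equivalent statistics induce the same equivalence classes on cyclic permutations, the four cyclic shuffle algebras in question literally share the underlying graded vector spaces with their ``c''-counterparts, and the coincidence of multiplication is immediate from the displayed computation in the proof of Theorem~\ref{t-equiv}.
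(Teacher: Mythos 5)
Your proof is correct and is exactly the argument the paper gives: the theorem follows by applying Theorem~\ref{t-equiv} to each of the four equivalences established in Lemmas~\ref{l-Desequiv}, \ref{l-desequiv}, \ref{l-Pkequiv}, and \ref{l-pkequiv}, using the previously established cyclic shuffle-compatibility of $\cDes$, $\cdes$, $\cPk$, and $\cpk$. Your closing remarks about the shared bases and multiplication simply unpack the proof of Theorem~\ref{t-equiv} and add nothing that needs separate verification.
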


\subsection{Symmetries revisited} \label{ss-symrev}

Let $f$ be a length-preserving involution on permutations that is both shuffle-compatibility-preserving and rotation-preserving. In
Section \ref{ss-symmetries}, we proved that if the cyclic permutation statistics $\cst_{1}$ and $\cst_{2}$ are $f$-equivalent and if $\cst_{1}$ is cyclic shuffle-compatible, then $\cst_{2}$ is also cyclic shuffle-compatible with cyclic shuffle algebra isomorphic to that of $\cst_{1}$. We now show that $f$-equivalence of two linear permutation statistics induces $f$-equivalence of their induced cyclic statistics.

\begin{lem} \label{l-fequivinduced}
Let $f$ be rotation-preserving. If $\st_{1}$ and $\st_{2}$ are $f$-equivalent linear permutation statistics, then their induced cyclic permutation statistics $\st_{1}$ and $\st_{2}$ are $f$-equivalent.
\end{lem}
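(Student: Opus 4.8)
The plan is to unwind the definitions of $f$-equivalence for linear and for cyclic statistics and check that the multiset-valued construction $\st[\pi] = \{\{\, \st\bar{\pi} : \bar{\pi}\in[\pi] \,\}\}$ is compatible with the action of $f$. Recall that $\st_1$ and $\st_2$ are $f$-equivalent as linear statistics means precisely that $\st_1\pi^f = \st_1\sigma^f$ if and only if $\st_2\pi = \st_2\sigma$ for all linear $\pi,\sigma$ of the same length; and the induced cyclic statistics are $f$-equivalent means $\st_1[\pi]^f = \st_1[\sigma]^f \iff \st_2[\pi]=\st_2[\sigma]$ for all cyclic permutations of the same length. So what we must show is: for cyclic permutations $[\pi],[\sigma]$ of the same length, $\st_1[\pi]^f = \st_1[\sigma]^f \iff \st_2[\pi] = \st_2[\sigma]$, where now $\st_1, \st_2$ on the left and right denote the \emph{induced cyclic} statistics.

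The key observation is that because $f$ is rotation-preserving, $[\pi]^f = [\pi^f]$, so applying $f$ to a cyclic permutation and then taking the induced multiset-valued statistic gives $\st_1([\pi]^f) = \st_1([\pi^f]) = \{\{\, \st_1\bar{\tau} : \bar{\tau}\in[\pi^f] \,\}\}$. Now every representative $\bar{\tau}\in[\pi^f]$ is of the form $\bar{\pi}^f$ for a unique $\bar{\pi}\in[\pi]$ — indeed $f$ is a length-preserving involution and rotation-preserving, so it restricts to a bijection $[\pi]\to[\pi^f]$, $\bar{\pi}\mapsto\bar{\pi}^f$. Hence
\[
\st_1([\pi]^f) = \{\{\, \st_1(\bar{\pi}^f) : \bar{\pi}\in[\pi] \,\}\}.
\]
This rewriting reduces the cyclic statement to a statement about the linear values $\st_1(\bar{\pi}^f)$ and $\st_2(\bar{\pi})$ ranging over representatives.

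With that in hand I would argue both directions of the biconditional. Suppose $\st_2[\pi] = \st_2[\sigma]$, i.e.\ the multisets $\{\{\,\st_2\bar{\pi} : \bar{\pi}\in[\pi]\,\}\}$ and $\{\{\,\st_2\bar{\sigma}:\bar{\sigma}\in[\sigma]\,\}\}$ agree; I want to conclude $\st_1([\pi]^f) = \st_1([\sigma]^f)$. The subtlety is that $f$-equivalence of linear statistics does \emph{not} say $\st_1\circ f$ equals $\st_2$ on the nose — only that they induce the same equivalence relation — so one cannot simply apply a fixed relabeling map to the multisets. The fix, which I expect to be the main obstacle and the step requiring the most care, is to recall from Section~\ref{ss-symmetries} that $f$-equivalence of $\st_1$ and $\st_2$ is exactly the statement that $\st_1$ and $\st_2 \circ f^{-1} = \st_2\circ f$ are \emph{equivalent} statistics in the sense of having the same equivalence classes; concretely, there is a well-defined bijection $g$ between the set of values $\{\st_2\rho : \rho\in\mathfrak{P}_m\}$ and the set of values $\{\st_1\rho : \rho\in\mathfrak{P}_m\}$ determined by $\st_2\rho \mapsto \st_1(\rho^f)$ (this is single-valued and invertible precisely because $\st_1\rho^f = \st_1\rho'^f \iff \st_2\rho = \st_2\rho'$). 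Applying $g$ entrywise to multisets is a bijection on multisets, and it carries $\{\{\,\st_2\bar{\pi}:\bar{\pi}\in[\pi]\,\}\}$ to $\{\{\,\st_1(\bar{\pi}^f):\bar{\pi}\in[\pi]\,\}\} = \st_1([\pi]^f)$ by the rewriting above. Since $g$ is a bijection, $\st_2[\pi]=\st_2[\sigma]$ if and only if $g$ applied entrywise gives equal multisets, i.e.\ if and only if $\st_1([\pi]^f) = \st_1([\sigma]^f)$. This establishes both directions simultaneously and completes the proof.

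A couple of small bookkeeping points I would verify along the way: that $[\pi]$ and $[\pi]^f$ have the same length (immediate since $f$ preserves length and $[\pi]^f = [\pi^f]$), so the length hypothesis in the definition of $f$-equivalence of cyclic statistics is automatically matched; and that the bijection $\bar{\pi}\mapsto\bar{\pi}^f$ from $[\pi]$ to $[\pi^f]$ is genuinely a bijection of the underlying sets of linear representatives, which follows because $f$ is an involution and $[\pi^f] = [\pi]^f = \{\bar{\pi}^f : \bar{\pi}\in[\pi]\}$ by definition of the induced action on sets. No deeper ingredient is needed — this lemma is purely a matter of chasing the multiset-valued construction through the $f$-action, with the only real content being the passage from "$\st_1\circ f$ induces the same equivalence relation as $\st_2$" to "entrywise application of the induced value-bijection $g$ intertwines the two induced cyclic statistics."
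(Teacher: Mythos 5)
Your proof is correct and follows essentially the same route as the paper's: both arguments hinge on the linear $f$-equivalence biconditional together with rotation-preservation to identify $\{\,\bar{\pi}^{f}:\bar{\pi}\in[\pi]\,\}$ with $[\pi^{f}]$. The only cosmetic difference is that you package the relabeling as a bijection on the sets of statistic values applied entrywise to the multisets, whereas the paper packages it as a value-preserving bijection between the representatives of $[\pi]$ and $[\sigma]$; these are interchangeable, and your version handles both directions of the biconditional at once.
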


\begin{proof}
Since $\st_{1}$ and $\st_{2}$ are $f$-equivalent linear permutation statistics, we have $\st_{1}\pi^{f}=\st_{1}\sigma^{f}$ if and only if $\st_{2}\pi=\st_{2}\sigma$. Suppose that $\st_{2}[\pi]=\st_{2}[\sigma]$. Then, there is a bijective correspondence $g\colon[\pi]\rightarrow[\sigma]$ satisfying $\st_{2}\bar{\pi}=\st_{2}g(\bar{\pi})$ for all $\bar{\pi}\in[\pi]$, so $\st_{1}\bar{\pi}^{f}=\st_{1}g(\bar{\pi})^{f}$ for all $\bar{\pi}\in[\pi]$. Because $f$ is rotation-preserving, the permutations $\bar{\pi}^{f}$ and $g(\bar{\pi})^{f}$ over all $\bar{\pi}\in[\pi]$ are precisely the rotations of $\pi^{f}$ and $\sigma^{f}$, respectively. Thus, we have $\st_{1}[\pi^{f}]=\st_{1}[\sigma^{f}]$. The converse follows from similar reasoning, so we have $\st_{1}[\pi^{f}]=\st_{1}[\sigma^{f}]$ if and only if $\st_{2}[\pi]=\st_{2}[\sigma]$---in other words, the cyclic permutation statistics $\st_{1}$ and $\st_{2}$ are $f$-equivalent.
\end{proof}

\begin{thm}
Let $f$ be shuffle-compatibility-preserving and rotation-preserving, and let $\st_{1}$ and $\st_{2}$ be $f$-equivalent linear permutation statistics. If the induced cyclic statistic $\st_{1}$ is cyclic shuffle-compatible, then the induced cyclic statistic $\st_{2}$ is also cyclic shuffle-compatible and $\mathcal{A}_{\st_{2}}^{\cyc}$ is isomorphic to $\mathcal{A}_{\st_{1}}^{\cyc}$.
\end{thm}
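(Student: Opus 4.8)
The plan is to obtain this statement as the composition of two results already in hand: Lemma~\ref{l-fequivinduced}, which lifts $f$-equivalence from a pair of linear statistics to their induced cyclic statistics, and Theorem~\ref{t-cycsym}, which transfers cyclic shuffle-compatibility---together with an isomorphism of cyclic shuffle algebras---along an $f$-equivalence of cyclic statistics whenever $f$ is both shuffle-compatibility-preserving and rotation-preserving.

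First I would check that the induced statistics $\st_{1}$ and $\st_{2}$ are genuine cyclic permutation statistics, so that Theorem~\ref{t-cycsym} applies to them verbatim. This is immediate: for a linear permutation statistic $\st$, the multiset $\st[\pi]=\{\{\,\st\bar{\pi}:\bar{\pi}\in[\pi]\,\}\}$ depends only on the relative order of $\pi$, since $\st$ respects relative order and a cyclic rotation of $\pi$ corresponds to a cyclic rotation of its standardization; hence $\st[\pi]=\st[\sigma]$ whenever $\pi$ and $\sigma$ have the same relative order.

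Next, because $f$ is rotation-preserving and $\st_{1},\st_{2}$ are $f$-equivalent \emph{linear} statistics, Lemma~\ref{l-fequivinduced} gives that the induced \emph{cyclic} statistics $\st_{1}$ and $\st_{2}$ are $f$-equivalent. Now apply Theorem~\ref{t-cycsym} with $\cst_{1}$ taken to be the induced cyclic statistic $\st_{1}$ and $\cst_{2}$ the induced cyclic statistic $\st_{2}$: since $f$ is shuffle-compatibility-preserving and rotation-preserving, $\cst_{1}$ and $\cst_{2}$ are $f$-equivalent, and $\cst_{1}$ is cyclic shuffle-compatible by hypothesis, the theorem yields that $\cst_{2}$---that is, the induced cyclic statistic $\st_{2}$---is cyclic shuffle-compatible and that $\mathcal{A}_{\st_{2}}^{\cyc}$ is isomorphic to $\mathcal{A}_{\st_{1}}^{\cyc}$, with the isomorphism being $[\pi]_{\st_{2}}\mapsto[\pi^{f}]_{\st_{1}}$.

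Since the argument is simply the concatenation of two previously established results, there is essentially no obstacle; the only point deserving (minor) attention is the verification in the second paragraph that the induced statistics meet the definition of a cyclic permutation statistic, which is what allows Theorem~\ref{t-cycsym} to be invoked directly. One may also remark that this theorem is the precise analogue, at the level of induced statistics, of the corollary to Theorem~\ref{t-cycsym}, recovering in particular the $r$-, $c$-, and $rc$-equivalent cases via Lemma~\ref{l-rotp}.
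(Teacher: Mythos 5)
Your proposal is correct and matches the paper's proof exactly: the paper also deduces this statement as an immediate consequence of Lemma~\ref{l-fequivinduced} combined with Theorem~\ref{t-cycsym}. The extra verification that the induced multiset-valued statistics are genuine cyclic permutation statistics is left implicit in the paper but is a reasonable point to note.
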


\begin{proof}
This is an immediate consequence of Theorem \ref{t-cycsym} and Lemma \ref{l-fequivinduced}. 
\end{proof}

\begin{cor} \label{c-syminduced}
Suppose that the linear permutation statistics $\st_{1}$ and $\st_{2}$ are $r$-equivalent, $c$-equivalent, or $rc$-equivalent. If the induced cyclic statistic $\st_{1}$ is cyclic shuffle-compatible, then the induced cyclic statistic $\st_{2}$ is also cyclic shuffle-compatible and its cyclic shuffle algebra $\mathcal{A}_{\st_{2}}^{\cyc}$ is isomorphic to $\mathcal{A}_{\st_{1}}^{\cyc}$.
\end{cor}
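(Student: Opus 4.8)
The plan is to obtain this as a direct instantiation of the theorem immediately preceding it, the only task being to verify that each of the three symmetries $r$, $c$, and $rc$ meets the two hypotheses imposed on $f$ there, namely that $f$ is shuffle-compatibility-preserving and rotation-preserving. Once this is done, there is nothing left to prove: if $\st_{1}$ and $\st_{2}$ are $r$-equivalent (resp.\ $c$-equivalent, $rc$-equivalent) linear permutation statistics, this is by definition the assertion that they are $f$-equivalent for $f=r$ (resp.\ $f=c$, $f=rc$), and the preceding theorem then applies word for word.

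First I would recall that Gessel and Zhuang established that reversal, complementation, and reverse-complementation are all shuffle-compatibility-preserving, as noted in Section \ref{ss-symmetries}; this disposes of the first hypothesis on $f$. Second, Lemma \ref{l-rotp} states precisely that reversal, complementation, and reverse-complementation are all rotation-preserving, which disposes of the second. Hence for each of the three choices $f\in\{r,c,rc\}$ both hypotheses of the preceding theorem are satisfied, so that theorem yields that the induced cyclic statistic $\st_{2}$ is cyclic shuffle-compatible and that $\mathcal{A}_{\st_{2}}^{\cyc}$ is isomorphic to $\mathcal{A}_{\st_{1}}^{\cyc}$.

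As for the main obstacle: there really is none of substance here, since the corollary is a pure specialization; all the actual content is already packaged in Theorem \ref{t-cycsym} and Lemma \ref{l-fequivinduced} (which together prove the theorem the corollary is deduced from). The only point requiring a moment's care is purely notational, namely checking that the definition of ``$f$-equivalent linear permutation statistics'' used in the hypothesis of the preceding theorem coincides with the $r$-, $c$-, and $rc$-equivalence appearing in the corollary's hypothesis—which it does, immediately from the definitions. Accordingly, the write-up will be two or three sentences invoking the two facts above and citing the preceding theorem.
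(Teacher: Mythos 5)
Your proposal is correct and matches the paper's (implicit) argument exactly: the corollary is a direct specialization of the preceding theorem, justified by Gessel and Zhuang's result that $r$, $c$, and $rc$ are shuffle-compatibility-preserving together with Lemma \ref{l-rotp} showing they are rotation-preserving. Nothing further is needed.
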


Given $\pi\in \mathfrak{P}_n$, recall that the valley set $\Val$ statistic is defined by
\[
\Val \pi \coloneqq\{\,i\in[n] : \pi_{i-1}>\pi_{i}<\pi_{i+1},\},
\]
and let us also define the \textit{cyclic valley set} $\cVal$ by
\[
\cVal \pi \coloneqq\{\,i\in[n] : \pi_{i-1}>\pi_{i}<\pi_{i+1}\text{ where }i\text{ is considered modulo }n\,\}.
\]
As a sample application of Corollary \ref{c-syminduced}, observe that $\Val$ is $c$-equivalent to $\Pk$ (as linear permutation statistics) and similarly with $\cVal$ and $\cPk$. Combining this with Lemma~\ref{l-Pkequiv}, we immediately obtain the following.

\begin{thm} [Cyclic shuffle-compatibility of $\Val$ and $\cVal$]
The cyclic statistics $\Val$ and $\cVal$ are cyclic shuffle-compatible, and we have the $\mathbb{Q}$-algebra isomorphisms
$$ \mathcal{A}_{\Val}^{\cyc} \cong \mathcal{A}_{\Pk}^{\cyc} \cong \mathcal{A}_{\cPk}^{\cyc} \cong \mathcal{A}_{\cVal}^{\cyc}. $$
\end{thm}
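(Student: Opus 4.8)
The plan is to obtain both statements from Corollary~\ref{c-syminduced} applied to complementation $f=c$, which is shuffle-compatibility-preserving (Gessel and Zhuang) and rotation-preserving (Lemma~\ref{l-rotp}), together with the isomorphism $\mathcal{A}_{\Pk}^{\cyc}\cong\mathcal{A}_{\cPk}^{\cyc}$ already established.

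The one computation to record is that the relevant pairs of \emph{linear} statistics are $c$-equivalent. For $\Pk$ and $\Val$ this was noted in Section~\ref{ss-symmetries}: $\Val\pi^{c}=\Pk\pi$ for all $\pi$. For the cyclic-index versions the same argument applies: complementation reverses all relative order, so for each index $i$ read modulo $n$, position $i$ is a cyclic peak of $\pi^{c}$ exactly when $\pi_{i-1}>\pi_{i}<\pi_{i+1}$, i.e.\ when $i$ is a cyclic valley of $\pi$. Hence $\cVal\pi^{c}=\cPk\pi$ for all $\pi$, so by the sufficient condition recalled in Section~\ref{ss-symmetries}, $\cPk$ and $\cVal$ are $c$-equivalent linear permutation statistics. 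Here and below, $\cVal$ as a cyclic statistic means the cyclic statistic induced by the linear statistic $\cVal$, i.e.\ $[\pi]\mapsto\{\{\cVal\bar\pi:\bar\pi\in[\pi]\}\}$, in parallel with $\cPk$.

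Now I would apply Corollary~\ref{c-syminduced} twice. Since $\Pk$ and $\Val$ are $c$-equivalent linear statistics and the induced cyclic statistic $\Pk$ is cyclic shuffle-compatible (by the theorem on cyclic shuffle-compatibility of $\Des$, $\des$, $\Pk$, and $\pk$), the induced cyclic statistic $\Val$ is cyclic shuffle-compatible with $\mathcal{A}_{\Val}^{\cyc}\cong\mathcal{A}_{\Pk}^{\cyc}$. Likewise, since $\cPk$ and $\cVal$ are $c$-equivalent linear statistics and the induced cyclic statistic $\cPk$ is cyclic shuffle-compatible (Corollary~\ref{c-cPk}), the induced cyclic statistic $\cVal$ is cyclic shuffle-compatible with $\mathcal{A}_{\cVal}^{\cyc}\cong\mathcal{A}_{\cPk}^{\cyc}$. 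Chaining these together with $\mathcal{A}_{\Pk}^{\cyc}\cong\mathcal{A}_{\cPk}^{\cyc}$ (which follows from Lemma~\ref{l-Pkequiv} and Theorem~\ref{t-equiv}) gives $\mathcal{A}_{\Val}^{\cyc}\cong\mathcal{A}_{\Pk}^{\cyc}\cong\mathcal{A}_{\cPk}^{\cyc}\cong\mathcal{A}_{\cVal}^{\cyc}$, as desired.

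No step here is genuinely difficult; the proof is an assembly of results from Sections~\ref{s-charcsa} and~\ref{s-induced}, and the only thing needing a line of verification is the identity $\cVal\pi^{c}=\cPk\pi$, which is immediate from the definition of the complement. If anything is a potential pitfall it is purely bookkeeping: one must note that the cyclic statistic $\cPk$ of Section~\ref{ss-cPkcpk} coincides with the cyclic statistic induced by the linear statistic $\cPk$, so that Corollary~\ref{c-cPk} and Corollary~\ref{c-syminduced} can legitimately be chained.
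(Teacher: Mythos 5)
Your proof is correct and follows essentially the same route as the paper: the paper likewise observes that $\Val$ is $c$-equivalent to $\Pk$ and $\cVal$ is $c$-equivalent to $\cPk$ as linear statistics, applies Corollary~\ref{c-syminduced}, and combines with Lemma~\ref{l-Pkequiv} to chain the isomorphisms. Your extra verification of $\cVal\pi^{c}=\cPk\pi$ and the bookkeeping remark about $\cPk$ coinciding with its induced cyclic statistic are both accurate and consistent with the paper's setup.
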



\subsection{Cyclic major index} \label{ss-cmaj}

A natural question to ask is whether there is a nice cyclic analogue of the major index. This question was raised in~\cite{Adin2021} and again in~\cite{Detal:csc}. One first needs to explain what one means by ``nice."

If $\pi\in\mathfrak{P}_m$ and $\si\in\mathfrak{P}_n$, then
$$
|\pi\shu\si| = \binom{m+n}{m}.
$$
From Stanley's theory of $P$-partitions~\cite{sta:osp}, one gets the $q$-analogue
\begin{equation}
\label{maj:gf}
    \sum_{\tau\in\pi\shu\si} q^{\maj\tau}
=q^{\maj\pi+\maj\si}\gauss{m+n}{m}
\end{equation}
where $\gauss{m+n}{m}$ is a $q$-binomial coefficient. Note that \eqref{maj:gf} implies that $\maj$ is shuffle-compatible.

It can be shown that
$$
\left|[\pi]\shu[\si]\right| = (m+n-1)\binom{m+n-2}{m-1}
$$
\cite{Detal:csc}, so one could ask that the cyclic major index give a $q$-analogue of this identity, similar to \eqref{maj:gf}, or at least for the cyclic major index to be cyclic shuffle-compatible.

Stanley also refined Equation~\eqref{maj:gf} as follows.  Let
$$
\pi\shu_k\si = \{\,\tau\in\pi\shu\si : \des\tau=k\,\}.
$$
If $\des\pi=i$ and $\des\si=j$, then
\begin{equation} \label{majdes:gf}
    \sum_{\tau\in\pi\shu_k\si} q^{\maj\tau}
=q^{\maj\pi+\maj\si+(k-i)(k-j)}
\gauss{m-j+i}{k-j}\gauss{n-i+j}{k-i};
\end{equation}
in particular, this implies that $(\des,\maj)$ is shuffle-compatible, and so we would like a $\cmaj$ statistic for which $\cmaj$ and $(\cdes,\cmaj)$ are both cyclic shuffle-compatible.

In~\cite{Adin2021}, Adin et al.\ computed the cardinality of 
$$
[\pi]\shu_k[\si]=\{\, [\tau]\in[\pi]\shu[\si] : \cdes[\tau]=k \,\}
$$
which inspired Ji and Zhang \cite{Ji2022} to define a $\cmaj$ statistic which gives a $q$-analogue of this count. They proved a generating function formula analogous to \eqref{majdes:gf}, but unfortunately, the formula does not simplify into single product, and one could hope for a different cyclic major index whose generating function would do so. Furthermore, their formula does not actually show that their $(\cdes,\cmaj)$ is cyclic shuffle-compatible; in fact, neither of their $\cmaj$ and $(\cdes,\cmaj)$ are cyclic shuffle-compatible.

Each of the cyclic statistics $\cDes$, $\cdes$, $\cPk$, and $\cpk$ is (or is equivalent to) a multiset-valued cyclic statistic induced by a corresponding linear permutation statistic, so a natural alternative definition for a cyclic major index would be to define $\cmaj$ first on linear permutations and then consider the multiset-valued statistic induced by the linear $\cmaj$. To that end, given a linear permutation $\pi$, let
$$
\cmaj\pi \coloneqq \sum_{k\in\cDes\pi} k.
$$
Unfortunately, the induced statistics $\cmaj$ and $(\cdes,\cmaj)$ are not cyclic shuffle-compatible. As a counterexample, take $\pi=1\,4\,7\,6\,9\,10\,8\,2\,5\,3$, $\sigma=1\,3\,5\,4\,7\,6\,9\,10\,8\,2$, and $\rho=11$. Then $\cdes[\pi]=\cdes[\sigma]=5$ and $\cmaj[\pi]=\cmaj[\sigma]=\{\{20,25^4,30^4,35\}\}$, but $\cmaj([\pi]\shuffle[\rho])\neq\cmaj([\sigma]\shuffle[\rho])$. For instance, 
the multiset $\{\{22,26,27,28,29,30,31,32,33,34,35\}\}$ is an element of $\cmaj([\pi]\shuffle[\rho])$ but not $\cmaj([\sigma]\shuffle[\rho])$.

Another option is to consider the cyclic statistic induced by the usual major index $\maj$, as opposed to $\cmaj$. Even if $\cmaj$ and $(\cdes,\cmaj)$ are not cyclic shuffle-compatible, it's conceivable that $\maj$ and $(\des,\maj)$ are. It turns out that $\maj$ is equivalent to $\cmaj$ and similarly with $(\des,\maj)$ and $(\cdes,\cmaj)$, so by Theorem \ref{t-equiv}, neither $\maj$ nor $(\des,\maj)$ are cyclic shuffle-compatible.

\begin{lem}\label{l-desmajequiv}
The cyclic permutation statistics $(\des,\maj)$ and $(\cdes,\cmaj)$ are equivalent.
\end{lem}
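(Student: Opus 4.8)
The plan is to show that the multiset $(\des,\maj)[\pi]$ and the multiset $(\cdes,\cmaj)[\pi]$ each determine the other, using the same bookkeeping idea as in the proofs of Lemmas \ref{l-desequiv} and \ref{l-cpkcdes}. Fix $\pi\in\mathfrak{P}_n$ and let $\bar\pi\in[\pi]$ be one of its $n$ rotations. The key observation is the relationship between $\Des\bar\pi$ and $\cDes\bar\pi$: either $n\notin\cDes\bar\pi$, in which case $\Des\bar\pi=\cDes\bar\pi$ and so $(\des\bar\pi,\maj\bar\pi)=(\cdes[\pi],\ccomaj)$—more precisely $\des\bar\pi=\cdes[\pi]$ and $\maj\bar\pi=\sum_{k\in\cDes\bar\pi}k=\cmaj\bar\pi$; or $n\in\cDes\bar\pi$, in which case $\Des\bar\pi=\cDes\bar\pi\setminus\{n\}$, so $\des\bar\pi=\cdes[\pi]-1$ and $\maj\bar\pi=\cmaj\bar\pi-n$. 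Thus for every rotation $\bar\pi$ the pair $(\des\bar\pi,\maj\bar\pi)$ is obtained from the pair $(\cdes\bar\pi,\cmaj\bar\pi)$ by an explicit rule depending only on whether $n$ is a cyclic descent of $\bar\pi$.

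First I would use this to recover $(\cdes,\cmaj)[\pi]$ from $(\des,\maj)[\pi]$. Since $\cdes\bar\pi=\cdes[\pi]$ is constant over $[\pi]$ while $\des\bar\pi\in\{\cdes[\pi]-1,\cdes[\pi]\}$, the largest first-coordinate appearing in the multiset $(\des,\maj)[\pi]$ equals $\cdes[\pi]$ (realized by the rotation beginning with the largest letter, which has no cyclic descent at position $n$). This pins down $\cdes[\pi]$. The rotations with $\des\bar\pi=\cdes[\pi]$ are exactly those with $n\notin\cDes\bar\pi$, and for these $\maj\bar\pi=\cmaj\bar\pi$; the rotations with $\des\bar\pi=\cdes[\pi]-1$ are those with $n\in\cDes\bar\pi$, and for these $\cmaj\bar\pi=\maj\bar\pi+n$. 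Applying the correction $+n$ to precisely the second group and leaving the first group alone recovers $(\cdes,\cmaj)[\pi]$ as a multiset.

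Conversely, to recover $(\des,\maj)[\pi]$ from $(\cdes,\cmaj)[\pi]$, note first that $\cdes[\pi]$ is the common first coordinate, and exactly $\cdes[\pi]$ of the $n$ rotations have a cyclic descent at position $n$ (these are the rotations $\bar\pi$ with $\bar\pi_n>\bar\pi_1$, and there are $\cdes[\pi]$ such cyclic-descent positions, hence $\cdes[\pi]$ such rotations). However, $(\cdes,\cmaj)[\pi]$ alone does not immediately tell us \emph{which} entries of the multiset correspond to those rotations—this is the one subtle point. Here I would instead argue directly: by Theorem \ref{t-equiv} it suffices to show $(\des,\maj)$ is a refinement of $(\cdes,\cmaj)$ and vice versa in the sense of equal equivalence classes, and in fact the cleanest route is to observe that $(\cdes,\cmaj)[\pi]$ is a \emph{function} of $(\des,\maj)[\pi]$ by the paragraph above, and symmetrically that $(\des,\maj)[\pi]$ is a function of $(\cdes,\cmaj)[\pi]$ by reversing the same correction: given the multiset $(\cdes,\cmaj)[\pi]$, we know $n$ and $\cdes[\pi]$, we select the sub-multiset of entries $(\cdes[\pi],\cmaj\bar\pi)$ coming from rotations with a cyclic descent at $n$ (characterized among the $n$ rotations as a specific count $\cdes[\pi]$, and their $\cmaj$ values are determined by the cyclic-descent structure which is itself fixed by $[\pi]$), subtract $n$ from their second coordinate and decrement their first coordinate, and leave the rest unchanged. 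Since each multiset is determined by the other, the two statistics have identical equivalence classes, which is the definition of equivalence.

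The main obstacle I anticipate is the converse direction's bookkeeping: one must be careful that the rule ``subtract $n$ from the rotations with a cyclic descent at the last position'' is well-defined as an operation on the \emph{multiset} $(\cdes,\cmaj)[\pi]$, i.e.\ that knowing $[\pi]$ up to its $(\cdes,\cmaj)$-class suffices to identify which entries to correct. This works because the set of positions that are cyclic descents is an invariant of $[\pi]$ up to cyclic shift, and rotating so that position $n$ sweeps through all cyclic-descent positions partitions the $n$ rotations into exactly the $\cdes[\pi]$ with $n$ a cyclic descent (whose $\cmaj$ equals $\cmaj\pi$ shifted by the appropriate cyclic reindexing) and the remaining $n-\cdes[\pi]$ without—and one checks that this partition, together with the associated $\maj$ values, is exactly what the multiset $(\cdes,\cmaj)[\pi]$ encodes. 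Modulo this verification the argument is the same routine as Lemmas \ref{l-desequiv} and \ref{l-pkequiv}.
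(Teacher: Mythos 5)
Your forward direction is sound: since $\des\bar\pi$ is visible as the first coordinate of each pair in $(\des,\maj)[\pi]$, the rule ``add $n$ to the second coordinate and $1$ to the first exactly when the first coordinate is $\cdes[\pi]-1$'' is a well-defined operation on the multiset and produces $(\cdes,\cmaj)[\pi]$. The converse is where the gap lies, and you have diagnosed it correctly but not repaired it: in $(\cdes,\cmaj)[\pi]$ every first coordinate equals $\cdes[\pi]$, so nothing in the multiset tells you which entries came from rotations with $n\in\cDes\bar\pi$ and therefore need the $-n$ correction. Your proposed fix is circular --- you ``select the sub-multiset of entries coming from rotations with a cyclic descent at $n$'' by appealing to ``the cyclic-descent structure which is itself fixed by $[\pi]$,'' but you may only use the multiset $(\cdes,\cmaj)[\pi]$, and two cyclic permutations sharing that multiset could a priori require different corrections. (Indeed, the paper later observes that the multiset $\cmaj[\pi]$ does \emph{not} determine $\cDes[\pi]$, so the partition you want to read off is not recoverable in the naive way.) Your closing paragraph restates the obstacle rather than resolving it.

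The paper closes this gap with a different bookkeeping device: instead of comparing $\maj\bar\pi$ with $\cmaj$ of the \emph{same} rotation, it compares it with $\cmaj$ of the \emph{next} rotation. Writing $[\pi]=\{\pi^{(1)},\dots,\pi^{(n)}\}$ with $\pi^{(i+1)}$ obtained from $\pi^{(i)}$ by moving the last letter to the front, one checks --- by the very case analysis you carried out --- that in \emph{both} cases
\[
\cmaj\pi^{(i+1)}=\maj\pi^{(i)}+\cdes[\pi].
\]
Because the correction is now the same constant $\cdes[\pi]$ for every $i$, the multiset $\cmaj[\pi]$ is obtained from $\maj[\pi]$ by adding $\cdes[\pi]$ to every element and vice versa; no identification of ``which entries to correct'' is needed in either direction. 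Combined with Lemma \ref{l-desequiv}, which converts between $\des[\pi]$ and $\cdes[\pi]$, this yields both implications. Replacing your same-rotation comparison with this shifted-rotation identity turns your computation into a complete proof.
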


\begin{proof}
Fix a cyclic permutation $[\pi]=\{\pi=\pi^{(1)},\pi^{(2)},\dots,\pi^{(n)}\}$ of length $n$ where, for each $i\in[n]$, $\pi^{(i+1)}$ is obtained from $\pi^{(i)}$ by rotating its last element to the front of the permutation and $i$ is taken modulo $n$. We claim that, for all $i\in[n]$,
\begin{equation}
\cmaj\pi^{(i+1)}=\begin{cases}
\cmaj\pi^{(i)}+\cdes[\pi]-n, & \text{if }n\in\cDes\pi^{(i)},\\
\cmaj\pi^{(i)}+\cdes[\pi], & \text{if }n\notin\cDes\pi^{(i)}.
\end{cases}\label{e-cmaj1}
\end{equation}

To prove \eqref{e-cmaj1}, first assume that $n\in\cDes\pi^{(i)}$, and let $k=\cdes[\pi]$. Then 
$$
\cDes\pi^{(i)}= \{j_1<j_2<\cdots<j_k=n\}
$$
whereas
$$
\cDes\pi^{(i+1)}= \{1<j_1+1<j_2+1<\cdots<j_{k-1}+1\}.
$$
So
$$
\cmaj\pi^{(i)}-\cmaj\pi^{(i+1)}= n-k,
$$
which is equivalent to the first case of \eqref{e-cmaj1}. The second case is proven using a similar computation.

Observe that Equation \eqref{e-cmaj1} is equivalent to
\begin{equation}
\cmaj\pi^{(i+1)}=\maj\pi^{(i)}+\cdes[\pi],\label{e-cmaj}
\end{equation}
which allows us to determine $\cmaj[\pi]$ from $\maj[\pi]$ and $\cdes[\pi]$. Moreover, $\cdes[\pi]$ can be determined from $\des[\pi]$ by Lemma \ref{l-desequiv}, so $(\cdes,\cmaj)[\pi]$ can be determined from $(\des,\maj)[\pi]$. 

Conversely, we can use (\ref{e-cmaj}) to determine $\maj[\pi]$ from $\cmaj[\pi]$ and $\cdes[\pi]$, and $\des[\pi]$ can be determined from $\cdes[\pi]$ by Lemma \ref{l-desequiv}; altogether, this means that we can also determine $(\des,\maj)[\pi]$ from $(\cdes,\cmaj)[\pi]$.
\end{proof}

\begin{lem}
The cyclic permutation statistics $\maj$ and $\cmaj$ are equivalent.
\end{lem}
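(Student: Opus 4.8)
The plan is to leverage Equation~\eqref{e-cmaj} from the proof of Lemma~\ref{l-desmajequiv}. Since $\pi^{(i+1)}$ ranges over all linear representatives of $[\pi]$ as $i$ ranges over $[n]$, that equation immediately yields the multiset identity
\[
\cmaj[\pi] = \{\{\, m + \cdes[\pi] : m \in \maj[\pi] \,\}\},
\]
i.e.\ $\cmaj[\pi]$ is obtained from $\maj[\pi]$ by shifting every element by the constant $\cdes[\pi]$. Thus, as in the proofs of Lemmas~\ref{l-desequiv} and~\ref{l-desmajequiv}, it suffices to show that the shift amount $\cdes[\pi]$ can be recovered from $\maj[\pi]$ and also from $\cmaj[\pi]$; the equivalence then follows by running the shift identity in both directions.

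To recover $\cdes[\pi]$, the key observation is that for $\pi \in \mathfrak{P}_n$ with $n \geq 1$ and $d \coloneqq \cdes[\pi]$,
\[
\sum_{\bar\pi \in [\pi]} \maj\bar\pi = d\binom{n}{2} \qquad\text{and}\qquad \sum_{\bar\pi \in [\pi]} \cmaj\bar\pi = d\binom{n+1}{2}.
\]
For the first identity, one writes $\sum_{\bar\pi}\maj\bar\pi = \sum_{\bar\pi}\sum_{j \in \Des\bar\pi} j$ and argues that each of the $d$ ``cyclic descent edges'' of the cyclic word occupies the gap between positions $j$ and $j+1$ for each $j \in [n]$ exactly once as $\bar\pi$ runs over the $n$ rotations, contributing $j$ to the major index precisely when $j \leq n-1$; hence each cyclic descent contributes $1 + 2 + \cdots + (n-1) = \binom{n}{2}$ in total. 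The second identity follows by the same argument, since now the wraparound position $n$ also contributes; alternatively, it follows by summing Equation~\eqref{e-cmaj} over $i \in [n]$, which gives $\sum_{\bar\pi}\cmaj\bar\pi = \sum_{\bar\pi}\maj\bar\pi + nd = d\binom{n+1}{2}$.

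With these formulas in hand the argument concludes quickly. When $n \leq 1$ the cyclic statistics $\maj$ and $\cmaj$ each take the single value $\{\{0\}\}$ on cyclic permutations of length $n$, so there is nothing to prove. For $n \geq 2$ we have $\binom{n}{2} \neq 0 \neq \binom{n+1}{2}$, so $\cdes[\pi]$ equals $\binom{n}{2}^{-1}\sum_{\bar\pi}\maj\bar\pi$ and also $\binom{n+1}{2}^{-1}\sum_{\bar\pi}\cmaj\bar\pi$; in particular, $\maj[\pi] = \maj[\sigma]$ forces $\cdes[\pi] = \cdes[\sigma]$, and likewise $\cmaj[\pi] = \cmaj[\sigma]$ forces $\cdes[\pi] = \cdes[\sigma]$. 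Combined with the shift identity, $\maj[\pi] = \maj[\sigma]$ then implies $\cmaj[\pi] = \cmaj[\sigma]$, and conversely $\cmaj[\pi] = \cmaj[\sigma]$ implies $\maj[\pi] = \maj[\sigma]$, so $\maj$ and $\cmaj$ are equivalent. The only step requiring genuine work is the counting identity $\sum_{\bar\pi}\maj\bar\pi = d\binom{n}{2}$ (equivalently, extracting $\cdes[\pi]$ from $\maj[\pi]$); everything else is routine bookkeeping.
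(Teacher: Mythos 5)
Your proof is correct and follows essentially the same route as the paper's: the crux in both is the counting identity $\sum_{\bar\pi\in[\pi]}\maj\bar\pi=\binom{n}{2}\cdes[\pi]$ (and its $\cmaj$ analogue with $\binom{n+1}{2}$), which lets you recover $\cdes[\pi]$ from either multiset. The only cosmetic difference is that you apply Equation~\eqref{e-cmaj} directly as a multiset shift identity, whereas the paper routes the same fact through Lemma~\ref{l-desmajequiv}; your version is slightly more self-contained but not a genuinely different argument.
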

\begin{proof}
Let $\pi\in \mathfrak{P}_n$. We first claim that $\cdes[\pi]$ can be determined either from  $\maj[\pi]$ or from  $\cmaj[\pi]$. Fix $i\in\cDes\pi$. Among all $n$ representatives of $[\pi]$, the index of $\pi_i$ spans the entire range $\{1,2,\dots,n\}$. Hence, if one adds up $\maj\bar{\pi}$ over all $\bar{\pi}\in[\pi]$, then $\pi_i$ will contribute $1+2+\cdots+(n-1)=\binom{n}{2}$ to the summation. Similarly, in taking the sum of all $\cmaj\bar{\pi}$, each $\pi_i$ will contribute $1+2+\cdots+n=\binom{n+1}{2}$. Thus, the sum of all elements of the multiset $\maj[\pi]$ is equal to $\binom{n}{2}\cdes[\pi]$ and the sum of all elements of $\cmaj[\pi]$ is equal to $\binom{n+1}{2}\cdes[\pi]$, and it follows that $\cdes[\pi]$ can be determined from $\maj[\pi]$ or $\cmaj[\pi]$.

Now we are ready to prove the equivalence between $\maj$ and $\cmaj$. For one direction, $\maj[\pi]$ completely determines $\cdes[\pi]$ and hence determines $\des[\pi]$ by Lemma \ref{l-desequiv}. In addition, $\maj[\pi]$ and $\des[\pi]$ together determine $(\cdes,\cmaj)[\pi]$ by Lemma \ref{l-desmajequiv}, so $\cmaj[\pi]$ can be determined from $\maj[\pi]$. One can similarly prove the other direction using the above claim and
Lemma~\ref{l-desmajequiv}.
\end{proof}

The \textit{comajor index} $\comaj$, defined by 
$$ \comaj \pi \coloneqq \sum_{k\in\Des\pi} (n-k) $$
for $\pi \in \mathfrak{P}_n$, is a classical variation of the major index statistic. Because the linear permutation statistics $\maj$ and $\comaj$ are $rc$-equivalent and the induced cyclic statistic $\maj$ is not cyclic shuffle-compatible, it follows from Corollary \ref{c-syminduced} that the induced cyclic statistic $\comaj$ is not cyclic shuffle-compatible either. We may also define the \textit{cyclic comajor index} $\ccomaj$ by
$$ \ccomaj \pi \coloneqq \sum_{k\in\cDes\pi} (n-k) $$
for $\pi \in \mathfrak{P}_n$; then it follows similarly that the induced cyclic statistics $\ccomaj$, ($\des$,$\comaj$), and ($\cdes$,$\ccomaj$) are not cyclic shuffle-compatible either.

Perhaps surprisingly, adding just a little bit of structure to our $\cmaj$ statistic gives a statistic which is equivalent to $\cDes$. As in the proof of Lemma \ref{l-desmajequiv}, given $\pi \in \mathfrak{P}_n$, let us write
$$
[\pi] =\{ \pi=\pi^{(1)}, \pi^{(2)},\ldots, \pi^{(n)}\}
$$
where $\pi^{(i+1)}$ is obtained from $\pi^{(i)}$ by rotating its last element to the front of the permutation and $i$ is taken modulo $n$.  Define the {\em ordered cyclic major index} of $[\pi]$ to be the cyclic word
$$
\ocmaj[\pi] \coloneqq [\cmaj\pi^{(1)}, \cmaj\pi^{(2)},\ldots,  \cmaj\pi^{(n)} ],
$$
i.e., the equivalence class of the sequence $(\cmaj\pi^{(1)}, \cmaj\pi^{(2)},\ldots,\cmaj\pi^{(n)})$ under cyclic shift.
\begin{thm}
The cyclic permutation statistics $\cDes$ and $\ocmaj$ are equivalent.
\end{thm}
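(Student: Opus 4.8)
The goal is to show that for cyclic permutations $[\pi]$ and $[\sigma]$ of a common length $n$, we have $\cDes[\pi]=\cDes[\sigma]$ if and only if $\ocmaj[\pi]=\ocmaj[\sigma]$. The cases $n\le 1$ are immediate (when $n=1$ every cyclic permutation has $\cDes[\pi]=\{\{\varnothing\}\}$ and $\ocmaj[\pi]=[0]$), so I would assume $n\ge 2$ throughout. The plan is to route both statistics through the single intermediate datum $[\cDes\pi]$, the equivalence class under cyclic shift of the cyclic descent set of any representative of $[\pi]$. The starting observation, essentially already contained in the proof of Lemma~\ref{l-desmajequiv}, is that moving the last letter of $\pi^{(i)}$ to the front shifts its cyclic descent set up by one: $\cDes\pi^{(i+1)}=\cDes\pi^{(i)}+1$, with all indices and cyclic shifts taken modulo $n$. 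Iterating gives $\cDes\pi^{(i)}=\cDes\pi+(i-1)$, so $\cDes[\pi]=\{\{\cDes\pi+i:i\in[n]\}\}$ is determined by $[\cDes\pi]$; conversely every set appearing in $\cDes[\pi]$ is a cyclic shift of $\cDes\pi$, so $[\cDes\pi]$ is determined by $\cDes[\pi]$. Hence $\cDes[\pi]=\cDes[\sigma]$ if and only if $[\cDes\pi]=[\cDes\sigma]$, and it remains to prove the same characterization for $\ocmaj$.

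One direction is routine: since $\cmaj\pi^{(i)}=\sum_{m\in\cDes\pi^{(i)}}m=\sum_{m\in\cDes\pi+(i-1)}m$, the cyclic word $\ocmaj[\pi]$ is obtained by applying the map $T\mapsto\sum_{m\in T}m$ termwise to the cyclic word $[\cDes\pi,\cDes\pi+1,\dots,\cDes\pi+(n-1)]$, and passing to another cyclic shift of $\cDes\pi$ merely cyclically shifts the output; thus $[\cDes\pi]=[\cDes\sigma]$ implies $\ocmaj[\pi]=\ocmaj[\sigma]$. The substance of the proof is the converse: recovering $[\cDes\pi]$ from $\ocmaj[\pi]=[c_1,\dots,c_n]$, where $c_i=\cmaj\pi^{(i)}$. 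For this I would invoke the identity~\eqref{e-cmaj1} from the proof of Lemma~\ref{l-desmajequiv}, which says that $c_{i+1}-c_i$ equals $\cdes[\pi]-n$ if $n\in\cDes\pi^{(i)}$ and equals $\cdes[\pi]$ otherwise. By Corollary~\ref{c-cpkcdes}(a) we have $1\le\cdes[\pi]\le n-1$, so the two possible values of a consecutive difference satisfy $\cdes[\pi]-n<0<\cdes[\pi]$; hence $c_{i+1}-c_i<0$ precisely when $n\in\cDes\pi^{(i)}$, and we may read off from $\ocmaj[\pi]$ the cyclic binary word $(d_1,\dots,d_n)$ with $d_i=1$ if $n\in\cDes\pi^{(i)}$ and $d_i=0$ otherwise.

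To finish, the shift relation $\cDes\pi^{(i)}=\cDes\pi^{(i-j)}+j$ shows that $j\in\cDes\pi^{(i)}$ if and only if $n\in\cDes\pi^{(i-j)}$, so that
\[
\cDes\pi^{(i)}=\{\,j\in[n]:d_{i-j}=1\,\}
\]
with indices modulo $n$. Thus the cyclic word $[\cDes\pi^{(1)},\dots,\cDes\pi^{(n)}]$, and in particular $[\cDes\pi]$, is a function of the cyclic word $[d_1,\dots,d_n]$, which is itself a function of $\ocmaj[\pi]$; since the whole reconstruction is equivariant under cyclic shift of $(c_1,\dots,c_n)$, it is well-defined on the cyclic word $\ocmaj[\pi]$. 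Therefore $\ocmaj[\pi]=\ocmaj[\sigma]$ forces $[\cDes\pi]=[\cDes\sigma]$, which together with the first paragraph yields the theorem. The step I expect to be the crux is precisely this reconstruction: one must see that the purely \emph{local} information of whether $n$ is a cyclic descent of $\pi^{(i)}$, which is all that a single consecutive difference of $\cmaj$-values reveals, nonetheless propagates through the shift-by-one structure of a rotation class to recover \emph{every} cyclic descent set. A secondary point needing care is keeping each construction equivariant under cyclic shift, since $\ocmaj$ is a cyclic word and not a sequence.
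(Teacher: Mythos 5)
Your proof is correct and follows essentially the same route as the paper's: both directions hinge on the shift relation $\cDes\pi^{(i+1)}=\cDes\pi^{(i)}+1$ and on Equation \eqref{e-cmaj1}, which (since $1\le\cdes[\pi]\le n-1$ for $n\ge 2$) lets one detect $n\in\cDes\pi^{(i)}$ from the sign of $\cmaj\pi^{(i+1)}-\cmaj\pi^{(i)}$. The only difference is cosmetic: the paper finishes by reading off the parts of $\cComp[\pi]$ from the gaps between consecutive descents of the $\ocmaj$ word, whereas you recover each $\cDes\pi^{(i)}$ directly via $j\in\cDes\pi^{(i)}\iff n\in\cDes\pi^{(i-j)}$, a slightly cleaner bookkeeping of the same information.
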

\begin{proof}
Let us assume throughout this proof that $n\geq2$, as the cases $n=0$ and $n=1$ are trivial. To see that $\cDes$ is a refinement of $\ocmaj$, suppose $\cDes[\pi]=\cDes[\si]$ where $\pi$ and $\sigma$ have the same length $n$. So, we can write $[\si]=\{ \si^{(1)}, \si^{(2)}, \ldots, \si^{(n)}\}$ where $\cDes\pi^{(i)}=\cDes\si^{(i)}$ for all $i\in[n]$.  It follows that
$$
\cmaj\pi^{(i)} = \sum_{k\in\cDes\pi^{(i)}} k
= \sum_{k\in\cDes\si^{(i)}} k = \cmaj\si^{(i)} 
$$
for all $i$, so $\ocmaj[\pi]=\ocmaj[\si]$.

For the converse, it is sufficient to show that the cyclic descent composition $\cComp[\pi]$ can be reconstructed from $\ocmaj[\pi]$. First, recall Equation \eqref{e-cmaj1}:
\begin{equation*}
\cmaj\pi^{(i+1)}=\begin{cases}
\cmaj\pi^{(i)}+\cdes[\pi]-n, & \text{if }n\in\cDes\pi^{(i)},\\
\cmaj\pi^{(i)}+\cdes[\pi], & \text{if }n\notin\cDes\pi^{(i)}.
\end{cases}
\end{equation*}
Since $n\geq2$, we have $1\leq\cdes[\pi]\leq n-1$, and together with the above equation, we have that $n\in\cDes\pi^{(i)}$ if and only if $\cmaj\pi^{(i)}>\cmaj\pi^{(i+1)}$.  A similar argument  shows that we can never have $\cmaj\pi^{(i)}=\cmaj\pi^{(i+1)}$.

Now, suppose we are given $\ocmaj[\pi]=[m_1,m_2,\ldots,m_n]$ where 
$m_i = \cmaj\pi^{(i)}$. Let $s$ and $t$ be two consecutive cyclic descents of $\ocmaj[\pi]$, i.e.,
$$
m_s>m_{s+1}<m_{s+2}<\cdots < m_t > m_{t+1}
$$
where subscripts are considered modulo $n$ as usual. From the previous paragraph, it follows that $n$ is in both $\cDes \pi^{(s)}$ and $\cDes\pi^{(t)}$, and that the penultimate descent in $\cDes \pi^{(s)}$ becomes the descent $n \in\cDes\pi^{(t)}$ with $n$ never being a descent for any of the intermediate cyclic descent sets. So $t-s$ (modulo $n$) is a part of the cyclic composition $\cComp[\pi]$. Therefore, all the parts of $\cComp[\pi]$ can be determined, and their order will be the same as that induced by the consecutive cyclic descents in $\ocmaj[\pi]$. Thus we have reconstructed $\cComp[\pi]$ from $\ocmaj[\pi]$, completing the proof.
\end{proof}

\begin{cor}[Cyclic shuffle-compatibility of $\ocmaj$]
The ordered cyclic major index $\ocmaj$ is cyclic shuffle-compatible, and its cyclic shuffle algebra $\mathcal{A}_{\ocmaj}^{\cyc}$ is isomorphic to $\mathcal{A}_{\cDes}^{\cyc}$.
\end{cor}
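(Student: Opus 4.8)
The plan is to deduce the corollary immediately from the theorem just proved together with Theorem~\ref{t-equiv}. Recall that Theorem~\ref{t-equiv} says that if $\cst_{1}$ is cyclic shuffle-compatible and $\cst_{1}$ and $\cst_{2}$ are equivalent cyclic permutation statistics, then $\cst_{2}$ is also cyclic shuffle-compatible and $\mathcal{A}_{\cst_{2}}^{\cyc}$ is isomorphic to $\mathcal{A}_{\cst_{1}}^{\cyc}$. So I would apply this with $\cst_{1}=\cDes$ and $\cst_{2}=\ocmaj$.

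Before invoking Theorem~\ref{t-equiv}, I would first record the (routine) fact that $\ocmaj$ is a genuine cyclic permutation statistic in the sense of the paper. Since $\cmaj$ of a linear permutation depends only on its cyclic descent set, and hence only on its relative order, the sequence $(\cmaj\pi^{(1)},\dots,\cmaj\pi^{(n)})$ is unchanged under standardization; moreover, choosing a different linear representative of $[\pi]$ merely cyclically shifts this sequence, so the cyclic word $\ocmaj[\pi]$ is well defined and depends only on the relative order of $\pi$. (This is already implicit in the statement of the preceding theorem, which presupposes that both $\cDes$ and $\ocmaj$ are cyclic permutation statistics.)

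With that in place, Corollary~\ref{c-cDes} tells us that $\cDes$ is cyclic shuffle-compatible, and the preceding theorem tells us that $\cDes$ and $\ocmaj$ are equivalent. Theorem~\ref{t-equiv} then applies verbatim, yielding that $\ocmaj$ is cyclic shuffle-compatible and that $\mathcal{A}_{\ocmaj}^{\cyc}\cong\mathcal{A}_{\cDes}^{\cyc}$, which is exactly the assertion of the corollary. Since every ingredient is already available, I do not anticipate any real obstacle here; the only point requiring care is the verification in the previous paragraph that $\ocmaj$ indeed qualifies as a cyclic permutation statistic, and even that is a short check.
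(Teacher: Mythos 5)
Your argument is exactly the intended one: the corollary follows immediately from the preceding equivalence theorem together with Theorem~\ref{t-equiv} and the cyclic shuffle-compatibility of $\cDes$ (Corollary~\ref{c-cDes}), which is why the paper states it without proof. Your extra remark that $\ocmaj$ is a well-defined cyclic permutation statistic is a correct and harmless bit of due diligence.
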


Of course, one could wonder if the unordered multiset of $\cmaj$ values is also equivalent to $\cDes$ for cyclic permutations, but this is not the case. Indeed, if the cyclic permutation statistics $\cmaj$ and $\cDes$ were equivalent, then the cyclic shuffle-compatibility of $\cDes$ would imply that $\cmaj$ is cyclic shuffle-compatible as well, which we know to be false.

\subsection{Other descent statistics} \label{ss-otherdes}
To conclude this section, let us consider the cyclic permutation statistics induced by the following linear descent statistics:
\begin{itemize}
\item The \textit{valley number} $\val$, which we defined earlier to be the number of valleys of a permutation.
\item The \textit{double descent set} $\Ddes$ and the \textit{double descent number} $\ddes$. We call $i\in \{2,3,\dots,n-1\}$ a \textit{double descent} of $\pi\in\mathfrak{P}_{n}$ if $\pi_{i-1}>\pi_{i}>\pi_{i+1}$. Then $\Ddes\pi$ is the set of double descents of $\pi$, and $\ddes\pi$ the number of double descents of $\pi$.
\item The \textit{left peak set} $\Lpk$ and the \textit{left peak number} $\lpk$. We call $i\in[n-1]$ a \textit{left peak} of $\pi\in\mathfrak{P}_{n}$ if $i$ is a peak of $\pi$, or if $i=1$ and $\pi_{1}>\pi_{2}$. Then $\Lpk\pi$ is the set of left peaks of $\pi$, and $\lpk\pi$ the number of left peaks of $\pi$.
\item The \textit{right peak set} $\Rpk$ and the \textit{right peak number} $\rpk$. We call $i\in\{2,3,\dots,n\}$ a \textit{right peak} of $\pi\in\mathfrak{P}_{n}$ if $i$ is a peak of $\pi$, or if $i=n$ and $\pi_{n-1}<\pi_{n}$. Then $\Rpk\pi$ is the set of right peaks of $\pi$, and $\rpk\pi$ the number of right peaks of $\pi$.
\item The \textit{exterior peak set} $\Epk$ and the \textit{exterior peak number} $\epk$. We call $i\in[n]$ an \textit{exterior peak} of $\pi\in\mathfrak{P}_{n}$ if $i$ is a left peak or right peak of $\pi$. Then $\Epk\pi$ is the set of exterior peaks of $\pi$, and $\epk\pi$ the number of exterior peaks of $\pi$.
\item The \textit{number of biruns} $\br$ and the \textit{number of up-down runs} $\udr$. A \textit{birun} of $\pi$ is a maximal consecutive monotone subsequence of $\pi$; an \textit{up-down run} of $\pi$ is a birun of $\pi$, or the first letter $\pi_{1}$ of $\pi$ if $\pi_{1}>\pi_{2}$. Then $\br\pi$ and $\udr\pi$ are the number of biruns and the number of up-down runs, respectively, of $\pi$.
\end{itemize}
For example, take $\pi = 713942658$. Then we have $\val\pi = 3$, $\Ddes\pi = \{5\}$, $\ddes\pi = 1$, $\Lpk\pi = \{1,4,7\}$, $\lpk\pi=3$, $\Rpk\pi = \{4,7,9\}$, $\rpk\pi=3$, $\Epk\pi = \{1,4,7,9\}$, $\epk\pi = 4$, $\br\pi = 6$, and $\udr\pi = 7$.

Aside from $\Ddes$, $\ddes$, and $\br$, all of the above statistics (as linear permutation statistics) are shuffle-compatible. Also, because these are all descent statistics, each of the induced cyclic statistics are cyclic descent statistics. Indeed, if we are given $\cDes[\pi]$ and the length of $\pi$, then we can determine $\Des[\pi]$ by Lemma \ref{l-Desequiv}, and we can then use the descent sets in $\Des[\pi]$ to obtain the multiset $\st[\pi]$ for any descent statistic $\st$.

Let us begin by examining the double descent statistics $\Ddes$ and $\ddes$. Since neither $\Ddes$ nor $\ddes$ are shuffle-compatible as linear permutation statistics, it is perhaps unsurprising that their induced cyclic statistics are not cyclic shuffle-compatible. As a counterexample, let $\pi=1234$, $\sigma=1324$, and $\rho=5$. Then both $\Ddes[\pi]=\Ddes[\sigma]$ and $\ddes[\pi]=\ddes[\sigma]$, but we have $\Ddes([\pi]\shuffle[\rho])\neq\Ddes([\sigma]\shuffle[\rho])$ and $\ddes([\pi]\shuffle[\rho])\neq\ddes([\sigma]\shuffle[\rho])$. For instance, $\{\{\emptyset^{5}\}\}$ appears three times in $\Ddes([\pi]\shuffle[\rho])$ but only twice in $\Ddes([\sigma]\shuffle[\rho])$, and accordingly $\{\{0^{5}\}\}$ appears three times in $\ddes([\pi]\shuffle[\rho])$ but only twice in $\ddes([\sigma]\shuffle[\rho])$.

While the linear statistic $\br$ is not shuffle-compatible, Domagalski et al.~\cite{Detal:csc} noted that the cyclic statistic $\cbr$ giving the number of \textit{cyclic biruns}\textemdash maximal consecutive monotone cyclic subsequences\textemdash is cyclic shuffle-compatible as it is precisely twice the number of cyclic peaks. 

\begin{thm}[Cyclic shuffle-compatibility of $\cbr$ and $(\cbr,\cdes)$]
The cyclic statistics $\cbr$ and $(\cbr,\cdes)$ are cyclic shuffle-compatible, and we have the $\mathbb{Q}$-algebra isomorphisms
$$ \mathcal{A}_{\cbr}^{\cyc} \cong \mathcal{A}_{\cpk}^{\cyc} \quad\text{and}\quad 
\mathcal{A}_{(\cbr,\cdes)}^{\cyc} \cong \mathcal{A}_{(\cpk,\cdes)}^{\cyc}. $$
\end{thm}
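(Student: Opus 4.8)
The plan is to derive both claims from Theorem~\ref{t-equiv}, using the fact recalled just above the statement that $\cbr$ is twice $\cpk$. First I would make the relation between $\cbr$ and $\cpk$ precise: for every cyclic permutation $[\pi]$ of length $n\geq2$ one has $\cbr[\pi]=2\cpk[\pi]$, since reading cyclically a cyclic birun terminates exactly at a cyclic peak or a cyclic valley, so that $\cbr[\pi]=\cpk[\pi]+\cval[\pi]$, while cyclic peaks and cyclic valleys alternate around the cycle and hence $\cpk[\pi]=\cval[\pi]$. (The cases $n\leq1$ are trivial, as there is only one cyclic permutation of each such length up to relative order.)

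Next I would use this identity to verify the hypotheses of Theorem~\ref{t-equiv}. Because $\cbr[\pi]=2\cpk[\pi]$ for all $[\pi]$, two cyclic permutations of the same length satisfy $\cbr[\pi]=\cbr[\sigma]$ if and only if $\cpk[\pi]=\cpk[\sigma]$; that is, $\cbr$ and $\cpk$ are equivalent cyclic permutation statistics in the sense of Section~\ref{ss-2rcsa}. The same identity shows that the pairs $(\cbr,\cdes)$ and $(\cpk,\cdes)$ are equivalent cyclic permutation statistics as well, since in each pair the first coordinate determines the other's first coordinate and the second coordinates coincide.

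Finally I would apply Theorem~\ref{t-equiv} twice. Taking $\cst_1=\cpk$, which is cyclic shuffle-compatible with cyclic shuffle algebra $\mathcal{A}_{\cpk}^{\cyc}$ by Theorem~\ref{t-cpk}, and $\cst_2=\cbr$ yields that $\cbr$ is cyclic shuffle-compatible with $\mathcal{A}_{\cbr}^{\cyc}\cong\mathcal{A}_{\cpk}^{\cyc}$; taking $\cst_1=(\cpk,\cdes)$, which is cyclic shuffle-compatible with cyclic shuffle algebra $\mathcal{A}_{(\cpk,\cdes)}^{\cyc}$ by Theorem~\ref{t-cpkcdes}, and $\cst_2=(\cbr,\cdes)$ yields that $(\cbr,\cdes)$ is cyclic shuffle-compatible with $\mathcal{A}_{(\cbr,\cdes)}^{\cyc}\cong\mathcal{A}_{(\cpk,\cdes)}^{\cyc}$. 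I do not anticipate any real obstacle: the sole non-formal input is the identity $\cbr=2\cpk$, and once it is in place the statement is an immediate consequence of the equivalence machinery already developed in Section~\ref{ss-2rcsa}.
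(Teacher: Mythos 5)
Your proof is correct and follows essentially the same route as the paper, which also derives the result from the identity $\cbr[\pi]=2\cpk[\pi]$ (proved there by the same birun/peak/valley alternation argument, cf.\ Lemma~\ref{l-cvalcpkeq}) together with the equivalence machinery of Theorem~\ref{t-equiv}.
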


Because $\des$ and $\cdes$ are equivalent as cyclic permutation statistics and similarly with $\pk$ and $\cpk$, one might expect the cyclic statistics $\br$ and $\cbr$ to be equivalent as well, but this is not the case because $\br$ is not actually cyclic shuffle-compatible. For instance, consider $\pi=25673489$, $\sigma=24567389$, and $\rho=1$. Then $\br[\pi]=\br[\sigma]$, but the multiset $\{\{5^{4},6^{4},7\}\}$ appears four times in $\br([\pi]\shuffle[\rho])$ but only twice in $\br([\sigma]\shuffle[\rho])$. One can also use the same permutations $\pi$, $\sigma$, and $\rho$ to show that $(\br,\des)$ is not cyclic shuffle-compatible.

Even though the linear statistics $\Lpk$ and $\Epk$ are shuffle-compatible, their induced cyclic statistics are not cyclic shuffle-compatible. As a counterexample, take
\begin{alignat*}{2}
\pi & =11\,6\,3\,7\,1\,4\,12\,10\,2\,9\,6\,8, & \sigma & =13,\\
\pi^{\prime} & =13\,7\,2\,9\,5\,3\,10\,4\,8\,12\,6\,11,\text{ and}\qquad & \sigma^{\prime} & =1.
\end{alignat*}
Then we have $\Lpk[\pi]=\Lpk[\pi^{\prime}]$, $\Lpk[\sigma]=\Lpk[\sigma^{\prime}]$, $\Epk[\pi]=\Epk[\pi^{\prime}]$, and $\Epk[\sigma]=\Epk[\sigma^{\prime}]$, yet $\Lpk([\pi]\shuffle[\sigma])\neq\Lpk([\pi^{\prime}]\shuffle[\sigma^{\prime}])$ and $\Epk([\pi]\shuffle[\sigma])\neq\Epk([\pi^{\prime}]\shuffle[\sigma^{\prime}])$ as the multiset 
\begin{align*}
\{\{\, & \{1,5,8,11\},\{2,6,9,12\},\{3,7,10\},\{1,4,8,11\},\{2,5,9,12\},\{1,3,6,10\},\\
 & \{1,4,7,11\},\{2,5,8,12\},\{3,6,9\},\{1,4,7,10\},\{2,5,8,11\},\{1,3,6,9,12\},\{1,4,7,10\}\,\}\}
\end{align*}
belongs to $\Lpk([\pi]\shuffle[\sigma])$ but not $\Lpk([\pi^{\prime}]\shuffle[\sigma^{\prime}])$, and the multiset 
\begin{align*}
\{\{\, & \{1,4,7,10\},\{1,4,8,11\},\{2,5,8,11\},\{2,5,8,12\},\{2,5,9,12\},\\
 & \{2,6,9,12\},\{3,6,9,13\},\{3,7,10,13\},\{1,3,6,9,12\},\\
 & \{1,3,6,10,13\},\{1,4,7,10,13\},\{1,4,7,11,13\},\{1,5,8,11,13\}\,\}\}
\end{align*}
belongs to $\Epk([\pi]\shuffle[\sigma])$ but not $\Epk([\pi^{\prime}]\shuffle[\sigma^{\prime}])$.

The left peak number $\lpk$, number of up-down runs $\udr$, and the pairs $(\lpk,\des)$ and $(\udr,\des)$ are also shuffle-compatible linear statistics whose induced cyclic statistics are not cyclic shuffle-compatible. For example, take $\pi=87516439$, $\sigma=53187649$, and $\rho=2$. Then $(\lpk,\des)[\pi]=(\lpk,\des)[\sigma]$ and $(\udr,\des)[\pi]=(\udr,\des)[\sigma]$ (and thus $\lpk[\pi]=\lpk[\sigma]$ and $\udr[\pi]=\udr[\sigma]$).
However:
\begin{itemize}
\item $\{\{(3,5)^{6},(3,6)^{3}\}\}$ is in $(\lpk,\des)([\pi]\shuffle[\rho])$
but not $(\lpk,\des)([\sigma]\shuffle[\rho])$,
\item $\{\{(6,5)^{3},(6,6)^{3},(7,5)^{3}\}\}$ is in $(\udr,\des)([\pi]\shuffle[\rho])$
but not $(\udr,\des)([\sigma]\shuffle[\rho])$,
\item $\{\{3^{9}\}\}$ is in $\lpk([\pi]\shuffle[\rho])$ but not $\lpk([\sigma]\shuffle[\rho])$,
\item and $\{\{6^{6},7^{3}\}\}$ is in $\udr([\pi]\shuffle[\rho])$
but not $\udr([\sigma]\shuffle[\rho])$.
\end{itemize}
Observe that $\Rpk$ is $r$-equivalent to $\Lpk$ and $\rpk$ is $r$-equivalent to $\lpk$. Hence, by Corollary~\ref{c-syminduced}, neither $\Rpk$ nor $\rpk$ are cyclic shuffle-compatible. One can also define ``left'', ``right'', and ``exterior'' versions of the valley set and valley number; by similar symmetry arguments, none of these are cyclic shuffle-compatible either.

In contrast, the exterior peak number $\epk$ and the pair $(\epk,\des)$ are cyclic shuffle-compatible because they are equivalent to $\cpk$ and $(\cpk,\cdes)$, respectively. To prove these equivalences, we will also need to consider the \textit{cyclic valley number} statistic $\cval$: we say that $i\in[n]$ is a \textit{cyclic valley} of $\pi\in\mathfrak{S}_n$ if $\pi_{i-1}>\pi_i<\pi_{i+1}$ with the indices considered modulo $n$, and $\cval[\pi]$ is defined to be the number of cyclic valleys of any permutation in $[\pi]$. Equivalently, $\cval[\pi]$ is the cardinality of the cyclic valley set $\cVal[\pi]$ defined in Section \ref{ss-symrev}.
\begin{lem}
\label{l-valcvaleq}The cyclic permutation statistics $\val$ and $\cval$ are equivalent.
\end{lem}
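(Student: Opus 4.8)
The plan is to deduce Lemma~\ref{l-valcvaleq} from the already-established equivalence of the cyclic statistics $\pk$ and $\cpk$ (Lemma~\ref{l-pkequiv}) by exploiting complementation, exactly in the spirit of Section~\ref{ss-symrev}: complementation carries each (cyclic) peak of a permutation to a (cyclic) valley in the same position and vice versa, so it intertwines the valley statistics with the peak statistics, and equivalence of cyclic statistics is preserved under precomposition with a bijection.

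In more detail, first I would record the elementary identities $\Val(\tau^{c})=\Pk\tau$ for linear permutations $\tau$ and, reading indices modulo $n$, $\cVal(\tau^{c})=\cPk\tau$; both hold because complementation reverses all of the inequalities defining (cyclic) valleys and peaks. Since complementation is rotation-preserving (Lemma~\ref{l-rotp}), we have $[\pi]^{c}=[\pi^{c}]=\{\bar{\pi}^{c}:\bar{\pi}\in[\pi]\}$, and therefore, at the level of cyclic permutation statistics,
\[
\val([\pi]^{c})=\{\{\,\val(\bar{\pi}^{c}):\bar{\pi}\in[\pi]\,\}\}=\{\{\,\pk\bar{\pi}:\bar{\pi}\in[\pi]\,\}\}=\pk[\pi]
\qquad\text{and}\qquad
\cval([\pi]^{c})=\cpk[\pi].
\]
That is, $\val\circ c=\pk$ and $\cval\circ c=\cpk$ as cyclic statistics. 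Combining this with Lemma~\ref{l-pkequiv} shows that $\val\circ c$ and $\cval\circ c$ are equivalent, and since $c$ is a length-preserving involution it restricts to a bijection on the cyclic permutations of each fixed length; precomposing with this bijection does not affect equivalence, so $\val$ and $\cval$ are equivalent.

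I do not expect any real obstacle here---the delicate endpoint bookkeeping is already absorbed into Lemma~\ref{l-pkequiv}. Should one prefer a self-contained argument, one can instead mimic the proof of Lemma~\ref{l-pkequiv} directly: a fixed cyclic valley of $\pi\in\mathfrak{P}_n$ sits at an interior position in exactly $n-2$ of the $n$ rotations of $\pi$ and at an endpoint in exactly $2$ of them, so $\sum_{\bar{\pi}\in[\pi]}\val\bar{\pi}=(n-2)\cval[\pi]$ and hence $\cval[\pi]$ is recovered from $\val[\pi]$; conversely, no rotation can have cyclic valleys at both of its endpoints because cyclic valleys cannot occupy cyclically adjacent positions, so $\val[\pi]$ consists of $2\cval[\pi]$ copies of $\cval[\pi]-1$ together with $n-2\cval[\pi]$ copies of $\cval[\pi]$, which recovers $\val[\pi]$ from $\cval[\pi]$. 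The one point worth double-checking in this alternative is precisely that cyclic valleys are never cyclically adjacent, which is immediate from the definition.
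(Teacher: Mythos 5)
Your main argument is exactly the paper's proof: both reduce the claim to the equivalence of $\pk$ and $\cpk$ (Lemma \ref{l-pkequiv}) via the $c$-equivalences $\val[\pi]=\pk[\pi^{c}]$ and $\cval[\pi]=\cpk[\pi^{c}]$, using that complementation is rotation-preserving. The self-contained alternative you sketch is also correct, but it merely replays the counting argument of Lemma \ref{l-pkequiv} with valleys in place of peaks, so it adds nothing beyond the first argument.
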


\begin{proof}
We have $\val[\pi]=\pk[\pi^c]$ for all $\pi$---that is, $\val$ and $\pk$ are $c$-equivalent---and similarly with $\cval$ and $\cpk$. By Lemma \ref{l-pkequiv}, $\pk$ and $\cpk$ are equivalent, so the same is true of $\val$ and $\cval$.
\end{proof}

\begin{lem}
\label{l-cvalcpkeq}For any cyclic permutation $[\pi]$, we have $\cval[\pi]=\cpk[\pi]$.
\end{lem}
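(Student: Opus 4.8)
The plan is to reduce both statistics to counting transitions in the cyclic up--down pattern of $[\pi]$. Fix a linear representative $\pi=\pi_{1}\pi_{2}\cdots\pi_{n}$ (the argument will not depend on the choice), and for each $i\in\mathbb{Z}/n\mathbb{Z}$ set $d_{i}=1$ if $\pi_{i}<\pi_{i+1}$ and $d_{i}=0$ if $\pi_{i}>\pi_{i+1}$, with indices taken modulo $n$; this records whether the step from position $i$ to position $i+1$ around the circle goes up or down. Directly from the definitions, $i$ is a cyclic peak of $\pi$ precisely when the step into position $i$ goes up and the step out of position $i$ goes down, i.e.\ $d_{i-1}=1$ and $d_{i}=0$; and $i$ is a cyclic valley of $\pi$ precisely when $d_{i-1}=0$ and $d_{i}=1$. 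Thus $\cpk[\pi]$ and $\cval[\pi]$ are the numbers of indices $i$ with $(d_{i-1},d_i)=(1,0)$ and $(d_{i-1},d_i)=(0,1)$, respectively.

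The main computation is then a telescoping sum. Using the indicator expressions above,
\[
\cpk[\pi]-\cval[\pi]=\sum_{i\in\mathbb{Z}/n\mathbb{Z}}\bigl(d_{i-1}(1-d_{i})-d_{i}(1-d_{i-1})\bigr)=\sum_{i\in\mathbb{Z}/n\mathbb{Z}}(d_{i-1}-d_{i})=0,
\]
since the final sum telescopes around the cycle. (Equivalently: reading $d$ as a cyclic binary word, its maximal runs of $1$s and of $0$s alternate, so the number of $10$ factors equals the number of $01$ factors.) This yields $\cpk[\pi]=\cval[\pi]$; the cases $n\in\{0,1\}$ are vacuous, and in all cases the quantity plainly depends only on $[\pi]$.

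Since the argument is entirely elementary, I do not expect a genuine obstacle; the only point requiring a moment's care is the bookkeeping of indices modulo $n$, so that ``the step into position $i$'' ($d_{i-1}$) and ``the step out of position $i$'' ($d_{i}$) are matched correctly against the inequalities $\pi_{i-1}<\pi_{i}>\pi_{i+1}$ and $\pi_{i-1}>\pi_{i}<\pi_{i+1}$ appearing in the definitions of $\cPk$ and $\cVal$.
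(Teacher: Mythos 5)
Your proof is correct, but it takes a different route from the paper's. The paper argues via cyclic biruns: each cyclic birun has a cyclic peak at one end and a cyclic valley at the other, so $2\cpk[\pi]=\cbr[\pi]=2\cval[\pi]$ (this is essentially the parenthetical remark in your write-up about alternating maximal runs of $0$s and $1$s). Your main argument instead encodes the cyclic up--down pattern as indicators $d_i$ and observes that $\cpk[\pi]-\cval[\pi]=\sum_i(d_{i-1}-d_i)$ telescopes to zero around the cycle. Both are one-line arguments; the paper's version has the advantage of simultaneously identifying the common value as $\tfrac{1}{2}\cbr[\pi]$, which it uses elsewhere, while your telescoping identity is a self-contained computation that requires no auxiliary notion of birun and handles the index bookkeeping cleanly. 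One small point: your treatment of $n=1$ as ``vacuous'' is slightly loose, since the indicator $d_1$ would compare $\pi_1$ with itself; it is cleaner to note directly that a permutation of length at most $1$ has no cyclic peaks and no cyclic valleys, so the identity holds trivially there.
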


\begin{proof}
Each cyclic birun starts with a cyclic peak and ends with a cyclic valley or vice-versa. So $2\cpk[\pi]=\cbr[\pi]=2\cval[\pi]$.
\end{proof}

\begin{lem}
\label{l-epkcpkeq}The cyclic permutation statistics $\epk$ and $\cpk$ are equivalent.
\end{lem}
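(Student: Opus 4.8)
The plan is to determine the multiset $\epk[\pi]$ completely in terms of $\cpk[\pi]$ and the length $n=|\pi|$. Once we know that
\[
\epk[\pi]=\{\{\,\cpk[\pi]^{2\cpk[\pi]},\ (\cpk[\pi]+1)^{n-2\cpk[\pi]}\,\}\},
\]
both directions of the equivalence follow at once: the right-hand side depends only on $\cpk[\pi]$ and $n$, and conversely $\cpk[\pi]=\min\epk[\pi]$, so the multiset $\epk[\pi]$ determines $\cpk[\pi]$. The cases $n\le 1$ are trivial, so assume $n\ge 2$ throughout.

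First I would relate $\epk$ of a linear representative to the cyclic peak and cyclic valley structure. Fix $\bar{\pi}=\pi_1\cdots\pi_n\in[\pi]$. The interior peaks of $\bar{\pi}$ are exactly the cyclic peaks of $\bar{\pi}$ lying in $\{2,\dots,n-1\}$, so $\pk\bar{\pi}=\cpk[\pi]-|\cPk\bar{\pi}\cap\{1,n\}|$, while directly from the definition of exterior peak we have $\epk\bar{\pi}=\pk\bar{\pi}+[\pi_1>\pi_2]+[\pi_{n-1}<\pi_n]$ in Iverson-bracket notation. A short case analysis---splitting on whether the pair $(\pi_n,\pi_1)$ is a cyclic ascent or descent, and then on the comparisons of $\pi_1$ with $\pi_2$ and of $\pi_{n-1}$ with $\pi_n$---then yields the identity
\[
\epk\bar{\pi}=\cpk[\pi]+1-[\,1\in\cVal\bar{\pi}\ \text{or}\ n\in\cVal\bar{\pi}\,].
\]
The two facts that make the bookkeeping close up are that $1$ and $n$ cannot both be cyclic valleys of $\bar{\pi}$ (one forces $\pi_n>\pi_1$ and the other $\pi_n<\pi_1$), and likewise cannot both be cyclic peaks of $\bar{\pi}$.

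Next I would count over rotations. As $\bar{\pi}$ ranges over the $n$ elements of $[\pi]$, the letter in position $1$ runs through all $n$ letters, and position $1$ of $\bar{\pi}$ is a cyclic valley exactly when that letter is a local minimum of the cyclic arrangement; hence exactly $\cval[\pi]$ of the rotations satisfy $1\in\cVal\bar{\pi}$, and symmetrically exactly $\cval[\pi]$ satisfy $n\in\cVal\bar{\pi}$. Since these two sets of rotations are disjoint, the displayed identity shows that $\epk\bar{\pi}=\cpk[\pi]$ for precisely $2\cval[\pi]$ rotations and $\epk\bar{\pi}=\cpk[\pi]+1$ for the remaining $n-2\cval[\pi]$. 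Applying Lemma~\ref{l-cvalcpkeq} to rewrite $\cval[\pi]$ as $\cpk[\pi]$ gives the displayed formula for $\epk[\pi]$, and the equivalence of $\epk$ and $\cpk$ follows.

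I expect the case analysis establishing the pointwise identity to be the main obstacle: one must carefully keep apart interior peaks, cyclic peaks at the endpoints, and the left-/right-peak corrections $[\pi_1>\pi_2]$ and $[\pi_{n-1}<\pi_n]$, and check the several local configurations around the seam between positions $n$ and $1$. An alternative that shortens this is to prove only the two-sided bound $\cpk[\pi]\le\epk\bar{\pi}\le\cpk[\pi]+1$ and then pin down the multiplicities using $\sum_{\bar{\pi}\in[\pi]}\epk\bar{\pi}=(n-2)\cpk[\pi]+n$, where $\sum_{\bar{\pi}\in[\pi]}\pk\bar{\pi}=(n-2)\cpk[\pi]$ as in the proof of Lemma~\ref{l-pkequiv} and the two correction terms sum, over all rotations, to the number of cyclic descents and the number of cyclic ascents of $\pi$, respectively.
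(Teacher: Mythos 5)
Your proposal is correct, but it takes a more hands-on route than the paper. The paper's proof is a two-line chain of previously established equivalences: it cites the linear identity $\epk\pi=\val\pi+1$ from Gessel and Zhuang, which makes $\epk$ and $\val$ equivalent as linear statistics and hence as induced cyclic statistics, and then invokes $\val\equiv\cval$ (Lemma \ref{l-valcvaleq}) and $\cval=\cpk$ (Lemma \ref{l-cvalcpkeq}). You instead compute the full distribution of $\epk$ over the rotation class from scratch: your seam case analysis giving $\epk\bar{\pi}=\cpk[\pi]+1-[\,1\in\cVal\bar{\pi}\text{ or }n\in\cVal\bar{\pi}\,]$ checks out in all eight configurations (and in the degenerate $n=2$ case), the disjointness of the two endpoint-valley events is exactly what makes the rotation count $2\cval[\pi]$ correct, and the resulting multiset formula $\epk[\pi]=\{\{\,\cpk[\pi]^{2\cpk[\pi]},(\cpk[\pi]+1)^{n-2\cpk[\pi]}\,\}\}$ agrees with what one would get by pushing the paper's chain of equivalences through Lemma \ref{l-pkequiv}. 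Your recovery of $\cpk[\pi]$ as $\min\epk[\pi]$ is valid since $2\cpk[\pi]\geq 2$ for $n\geq 2$, so that multiplicity is never zero. What your approach buys is an explicit closed form for the multiset $\epk[\pi]$ (a strictly more quantitative statement, and one that parallels the paper's own proofs of Lemmas \ref{l-desequiv} and \ref{l-pkequiv}); what it costs is the case analysis, which the paper avoids entirely by reusing the linear identity $\epk=\val+1$. Your suggested shortcut via the two-sided bound and the sum $\sum_{\bar{\pi}\in[\pi]}\epk\bar{\pi}=(n-2)\cpk[\pi]+n$ is also sound, since the two correction terms contribute $\cdes[\pi]$ and $n-\cdes[\pi]$ respectively over all rotations.
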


\begin{proof}
For any linear permutation $\pi$, we have $\epk\pi=\val\pi+1$ \cite[Lemma 2.1 (e)]{Gessel2018}, so $\epk$ and $\val$ are equivalent as linear permutation statistics and thus as cyclic permutation statistics. (We can obtain $\val[\pi]$ from $\epk[\pi]$ by subtracting 1 from each element in the multiset, and $\epk[\pi]$ from $\val[\pi]$ by adding 1 to each element.) Moreover, $\val$ is equivalent to $\cval$ (Lemma \ref{l-valcvaleq}) which is in turn equivalent to $\cpk$ (Lemma \ref{l-cvalcpkeq}); hence, $\epk$ is equivalent to $\cpk$.
\end{proof}

\begin{thm}[Cyclic shuffle-compatibility of $\val$, $\cval$, $\epk$, $(\val,\des)$, $(\cval,\cdes)$, and $(\epk,\des)$]
The cyclic statistics $\val$, $\cval$, $\epk$, $(\val,\des)$, $(\cval,\cdes)$, and $(\epk,\des)$ are cyclic shuffle-compatible, and we have the $\mathbb{Q}$-algebra isomorphisms
$$ \mathcal{A}_{\val}^{\cyc} \cong \mathcal{A}_{\cval}^{\cyc} \cong \mathcal{A}_{\epk}^{\cyc} \cong \mathcal{A}_{\cpk}^{\cyc}
\quad\text{and}\quad 
\mathcal{A}_{(\val,\des)}^{\cyc} \cong \mathcal{A}_{(\cval,\cdes)}^{\cyc} \cong \mathcal{A}_{(\epk,\des)}^{\cyc} \cong \mathcal{A}_{(\cpk,\cdes)}^{\cyc}. $$
\end{thm}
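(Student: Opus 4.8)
The plan is to deduce everything from Theorem \ref{t-equiv} together with the cyclic shuffle-compatibility of $\cpk$ and $(\cpk,\cdes)$ already established in Theorems \ref{t-cpk} and \ref{t-cpkcdes}; what remains is to verify the relevant equivalences of cyclic statistics. For the three unpaired statistics this is immediate: Lemmas \ref{l-valcvaleq}, \ref{l-cvalcpkeq}, and \ref{l-epkcpkeq} show that $\val$, $\cval$, and $\epk$ are each equivalent to $\cpk$, so applying Theorem \ref{t-equiv} (with $\cst_1=\cpk$) shows that $\val$, $\cval$, and $\epk$ are cyclic shuffle-compatible with cyclic shuffle algebras isomorphic to $\mathcal{A}_{\cpk}^{\cyc}$, giving the first chain of isomorphisms.

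For the paired statistics, two of the needed equivalences are cheap. Since Lemma \ref{l-cvalcpkeq} gives $\cval[\pi]=\cpk[\pi]$ for every cyclic permutation $[\pi]$, the cyclic statistics $(\cval,\cdes)$ and $(\cpk,\cdes)$ are literally equal. And since $\epk\bar\pi=\val\bar\pi+1$ for every linear permutation $\bar\pi$, the induced multiset-valued statistics satisfy $(\epk,\des)[\pi]=\{\{\,(\val\bar\pi+1,\des\bar\pi):\bar\pi\in[\pi]\,\}\}$, which is obtained from $(\val,\des)[\pi]$ by applying to each pair the bijection that adds $1$ to the first coordinate; hence $(\epk,\des)$ is equivalent to $(\val,\des)$. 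So it remains only to prove that $(\val,\des)$ is equivalent to $(\cpk,\cdes)$.

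For this I would rerun the rotation-counting argument from the proof of Theorem \ref{t-cpkcdes}, but tracking cyclic valleys at the two ends of a rotation rather than cyclic peaks. Fix $\pi\in\mathfrak{P}_n$ with $n\geq2$ and set $j=\cpk[\pi]=\cval[\pi]$ (the two agree by Lemma \ref{l-cvalcpkeq}) and $k=\cdes[\pi]$. A rotation $\bar\pi\in[\pi]$ has $\val\bar\pi=j-\varepsilon$, where $\varepsilon\in\{0,1\}$ records whether $\bar\pi$ has a cyclic valley in position $1$ or in position $n$ (it cannot have both), and $\des\bar\pi=k-\delta$ where $\delta=1$ exactly when $\bar\pi_n>\bar\pi_1$. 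Counting the rotations by which letter of $\pi$ sits at an end shows that exactly $j$ of them have a cyclic valley in position $1$ (these force $\bar\pi_n>\bar\pi_1$, hence contribute the pair $(j-1,k-1)$), exactly $j$ have a cyclic valley in position $n$ (these force $\bar\pi_n<\bar\pi_1$, hence contribute $(j-1,k)$), exactly $k-j$ of the remaining rotations satisfy $\bar\pi_n>\bar\pi_1$ (contributing $(j,k-1)$), and the last $n-j-k$ contribute $(j,k)$, with all multiplicities nonnegative by Corollary \ref{c-cpkcdes}. Therefore
\[
(\val,\des)[\pi]=\{\{\,(j-1,k-1)^{j},\ (j-1,k)^{j},\ (j,k-1)^{k-j},\ (j,k)^{n-j-k}\,\}\},
\]
so $(\val,\des)[\pi]$ depends only on $j=\cpk[\pi]$, $k=\cdes[\pi]$, and $n=|\pi|$; and since $j\geq1$ when $n\geq2$, this multiset conversely determines $(j,k)$, as $j-1$ is its least first coordinate, $k$ its greatest second coordinate, and $n$ its cardinality. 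Hence $(\val,\des)$ and $(\cpk,\cdes)$ induce the same equivalence classes on cyclic permutations of each length (the cases $n\leq1$ being trivial), so they are equivalent.

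Combining the above, each of $(\val,\des)$, $(\cval,\cdes)$, and $(\epk,\des)$ is equivalent to $(\cpk,\cdes)$, which by Theorem \ref{t-cpkcdes} is cyclic shuffle-compatible with cyclic shuffle algebra $\mathcal{A}_{(\cpk,\cdes)}^{\cyc}$; one more application of Theorem \ref{t-equiv} yields cyclic shuffle-compatibility of all three statistics and the second chain of isomorphisms. The one step requiring actual work is the rotation count in the third paragraph, a routine variant of the one already performed for cyclic peaks in the proof of Theorem \ref{t-cpkcdes}; everything else is formal manipulation of equivalences.
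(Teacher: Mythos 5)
Your proof is correct, and its overall architecture is the same as the paper's: reduce everything to $\cpk$ and $(\cpk,\cdes)$ via the equivalences in Lemmas \ref{l-valcvaleq}, \ref{l-cvalcpkeq}, and \ref{l-epkcpkeq} and then invoke Theorem \ref{t-equiv}. The one place you diverge is also the one place where you are more careful than the paper. For the paired statistics the paper simply asserts that $(\val,\des)$ is equivalent to $(\cpk,\cdes)$ ``because $\val$ is equivalent to $\cpk$ and $\des$ is equivalent to $\cdes$.'' That inference is not a formal tautology for multiset-valued induced statistics: $(\val,\des)[\pi]$ is the \emph{joint} distribution over rotations, which in general refines the common refinement of the two marginals --- indeed, the paper's own example of $(\Pk,\Val)$ at the end of Section \ref{ss-otherdes} shows a pair of individually cyclic shuffle-compatible statistics whose joint version fails to be. Your explicit rotation count, showing
\[
(\val,\des)[\pi]=\{\{\,(j-1,k-1)^{j},\ (j-1,k)^{j},\ (j,k-1)^{k-j},\ (j,k)^{n-j-k}\,\}\}
\]
with $j=\cpk[\pi]$ and $k=\cdes[\pi]$, and that $(j,k,n)$ is recoverable from this multiset, is exactly the missing verification (I checked the count: the four multiplicities sum to $n$, the positions $1$ and $n$ cannot both be cyclic valleys, and a cyclic valley at position $1$ forces $\bar\pi_n>\bar\pi_1$ while one at position $n$ forces the reverse). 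So your write-up is a strictly more complete version of the paper's argument, at the cost of one routine counting paragraph parallel to the one in the proof of Theorem \ref{t-cpkcdes}.
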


\begin{proof}
The cyclic shuffle-compatibility of $\val$, $\cval$, and $\epk$, and the corresponding isomorphisms, follow from the cyclic shuffle-compatibility of $\cpk$ and the equivalences between these four statistics. Furthermore, $(\val,\des)$ is equivalent to $(\cpk,\cdes)$ because $\val$ is equivalent to $\cpk$ and $\des$ is equivalent to $\cdes$, and similarly $(\cval,\cdes)$ and $(\epk,\des)$ are equivalent to $(\cpk,\cdes)$ as well. Because $(\cpk,\cdes)$ is cyclic shuffle-compatible, the results for $(\val,\des)$, $(\cval,\cdes)$, and $(\epk,\des)$ follow.
\end{proof}
Finally, we provide counterexamples showing that neither $(\Pk,\Val)$ nor $(\pk,\val)$ are cyclic shuffle-compatible. Let $\pi=214$, $\sigma=536$, $\pi^{\prime}=123$, and $\sigma^{\prime}=546$. Then $(\Pk,\Val)[\pi]=(\Pk,\Val)[\pi^{\prime}]$ and $(\Pk,\Val)[\sigma]=(\Pk,\Val)[\sigma^{\prime}]$, which imply $(\pk,\val)[\pi]=(\pk,\val)[\pi^{\prime}]$ and $(\pk,\val)[\sigma]=(\pk,\val)[\sigma^{\prime}]$ as well. However,
\[
\{\{\,(\emptyset,\emptyset),(\emptyset,\{5\}),(\{2\},\emptyset),(\{3\},\{2\}),(\{4\},\{3\}),(\{5\},\{4\})\,\}\}
\]
is an element of $(\Pk,\Val)([\pi]\shuffle[\sigma])$ but not $(\Pk,\Val)([\pi^{\prime}]\shuffle[\sigma^{\prime}])$, and 
\[
\{\{\,(0,0),(0,1),(1,0),(1,1)^{3}\,\}\}
\]
is an element of $(\pk,\val)([\pi]\shuffle[\sigma])$ but not $(\pk,\val)([\pi^{\prime}]\shuffle[\sigma^{\prime}])$.

\section{Open problems and questions} \label{s-openprob}

In Section \ref{s-induced}, we studied various multiset-valued cyclic statistics induced from linear statistics, as well as a $\ocmaj$---an ordered version of the $\cmaj$ statistic---which we found to be equivalent to $\cDes$. We can generalize the construction of $\ocmaj$ in the following way. Given any linear permutation statistic $\st$, let $\ost[\pi]$ be the cyclic word defined by
$$
\ost[\pi] \coloneqq [\st\pi^{(1)}, \st\pi^{(2)},\ldots, \st\pi^{(n)} ],
$$
where $[\pi] =\{ \pi=\pi^{(1)}, \pi^{(2)},\ldots, \pi^{(n)}\}$ and $\pi^{(i)}$ is defined as in Section \ref{ss-cmaj}.

\begin{problem} Study the cyclic statistics $\ost$ for various linear permutation statistics $st$.
\end{problem}

It would be interesting to find new cyclic shuffle-compatibility results stemming from these statistics---i.e., if one of the $\ost$ is cyclic shuffle-compatible and is not equivalent to another statistic already known to be shuffle-compatible. On the other hand, it would also be interesting to find nontrivial equivalences between these statistics and others, regardless of whether they are cyclic shuffle-compatible.

Next, we pose a question related to the lifting lemma of Domagalski et al.~\cite[Lemma 2.3]{Detal:csc}, which provides an avenue for proving cyclic shuffle-compatibility of a cyclic descent statistic using the shuffle-compatibility of a related linear descent statistic. The lifting lemma involves two maps $S_i$ and $M$, defined as follows. Given $\pi\in\mathfrak{S}_n$ and $i\in[n]$, let $S_i[\pi]$ be the unique linear permutation in $[\pi]$ which starts with $i$, and let $M[\pi]$ be the linear permutation of length $n-1$ obtained by first applying $S_n$ to $[\pi]$ and then removing the initial $n$. For example, we have $S_4[162453]=453162$ and $M[162453]=24531$.

\begin{lem}[Lifting lemma]
Let $\cst$ be a cyclic descent statistic and $\st$ a shuffle-compatible linear descent statistic for which the following conditions hold\textup{:}
\begin{enumerate}
\item[\normalfont{(a)}] For any $\pi,\pi^\prime\in \mathfrak{S}_n$, we have 
$$
\st(M[\pi]) = \st(M[\pi']) \qmq{implies} \cst[\pi]=\cst[\pi^\prime].
$$
\item[\normalfont{(b)}] For any $\pi,\pi^\prime\in \mathfrak{S}_n$ with $\cst[\pi]=\cst[\pi^\prime]$, there exists a bijection $f\colon [n]\rightarrow[n]$ such that 
$$
\st(S_i[\pi]) = \st(S_{f(i)}[\pi^\prime])
$$
for all $i\in[n]$.
\end{enumerate}
Then, $\cst$ is cyclic shuffle-compatible.
\end{lem}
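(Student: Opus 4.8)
The plan is to reduce cyclic shuffles to ordinary linear shuffles by ``pointing at the overall largest letter,'' and then to run the resulting linear data through hypotheses (a) and (b). Since $\st$ and $\cst$ depend only on relative order, we may replace $\pi,\sigma$ by disjoint representatives with any convenient relative order; in particular we may decide which of $\pi,\sigma$ holds the largest letter overall, and the distribution $\cst([\pi]\shuffle[\sigma])$ does not depend on this choice. The cases $|\pi|=0$ or $|\sigma|=0$ are immediate, so assume $m\coloneqq|\pi|\ge1$ and $n\coloneqq|\sigma|\ge1$. \emph{Step 1: a bijection.} Suppose the representatives are chosen so that $\sigma$ contains the largest letter, call it $M_\sigma$. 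Given $[\tau]\in[\pi]\shuffle[\sigma]$, let $\tau$ be the unique representative of $[\tau]$ beginning with $M_\sigma$. In any linear representative of $[\tau]$ the $\sigma$-letters occur in the cyclic order dictated by $[\sigma]$, so, since $\tau$ begins with $M_\sigma$, the $\sigma$-subsequence of $\tau$ is forced to be the rotation of $\sigma$ starting at $M_\sigma$, namely $M_\sigma\cdot M[\sigma]$, while the $\pi$-subsequence is some rotation $\bar\pi\in[\pi]$. Writing $\tau=M_\sigma\cdot\rho$, we get $\rho=M[\tau]$, and $\rho$ is a linear shuffle of $\bar\pi$ with the \emph{fixed} permutation $M[\sigma]$; conversely $\rho\mapsto[M_\sigma\cdot\rho]$ inverts this. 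Hence
$$
[\tau]\longmapsto M[\tau]\quad\text{is a bijection}\quad [\pi]\shuffle[\sigma]\;\xrightarrow{\ \sim\ }\;\bigsqcup_{\bar\pi\in[\pi]}\bigl(\bar\pi\shuffle M[\sigma]\bigr),
$$
the union being disjoint since a word with distinct letters determines its $\pi$-subsequence.

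\emph{Step 2: apply (a), (b), and shuffle-compatibility of $\st$.} By hypothesis (a), on cyclic permutations of each fixed length $N$ the value $\cst[\tau]$ is a function $h_N$ of $\st(M[\tau])$. Combining this with Step 1,
$$
\cst\bigl([\pi]\shuffle[\sigma]\bigr)=\Bigl\{\Bigl\{\,h_{m+n}\!\bigl(\st\rho\bigr):\rho\in\bigsqcup_{\bar\pi\in[\pi]}\bigl(\bar\pi\shuffle M[\sigma]\bigr)\Bigr\}\Bigr\}.
$$
Shuffle-compatibility of $\st$ shows that, for each $\bar\pi$, the distribution $\st(\bar\pi\shuffle M[\sigma])$ depends only on $\st\bar\pi$, $\st(M[\sigma])$, $m$, and $n-1$; summing over $\bar\pi$, the right-hand side depends only on the multiset $\{\{\,\st\bar\pi:\bar\pi\in[\pi]\,\}\}$, on $\st(M[\sigma])$, and on $m,n$. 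Finally, hypothesis (b)---applied after standardizing $[\pi]$ to $\mathfrak{S}_m$---produces, whenever $\cst[\pi]=\cst[\pi']$, a bijection matching the rotations of $\pi$ with those of $\pi'$ while preserving $\st$-values, which says precisely that $\{\{\,\st\bar\pi:\bar\pi\in[\pi]\,\}\}$ is determined by $\cst[\pi]$. We conclude: \emph{when $\sigma$ holds the overall maximum}, $\cst([\pi]\shuffle[\sigma])$ is determined by $\cst[\pi]$, $\st(M[\sigma])$, $m$, $n$; and by the symmetric argument (pointing at the maximum now lying in $\pi$), \emph{when $\pi$ holds the overall maximum}, it is determined by $\cst[\sigma]$, $\st(M[\pi])$, $m$, $n$.

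\emph{Step 3: combine the two forms.} Suppose $\cst[\pi]=\cst[\pi']$, $\cst[\sigma]=\cst[\sigma']$, $|\pi|=|\pi'|=m$, and $|\sigma|=|\sigma'|=n$. First take disjoint representatives with $\pi$ holding the overall maximum (and $[\sigma],[\sigma']$ on smaller letters): by the symmetric form, $\cst([\pi]\shuffle[\sigma])$ and $\cst([\pi]\shuffle[\sigma'])$ are each determined by the single value $\st(M[\pi])$ together with $\cst[\sigma]=\cst[\sigma']$, hence are equal. Next take disjoint representatives with $\sigma'$ holding the overall maximum (and $[\pi],[\pi']$ on smaller letters): now $\cst([\pi]\shuffle[\sigma'])$ and $\cst([\pi']\shuffle[\sigma'])$ are each determined by $\st(M[\sigma'])$ together with $\cst[\pi]=\cst[\pi']$, hence are equal. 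Chaining these equalities, $\cst([\pi]\shuffle[\sigma])=\cst([\pi']\shuffle[\sigma'])$, so $\cst$ is cyclic shuffle-compatible.

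I expect the crux to be Step 1: verifying carefully that pointing at the overall maximum pins the \emph{non}-maximal permutation to a single rotation---so that it contributes the fixed linear permutation $M[\cdot]$ rather than all of its rotations---and that the resulting correspondence is a genuine bijection onto the stated disjoint union (including the degenerate situations, e.g.\ $M[\sigma]$ empty when $n=1$). This built-in asymmetry (one permutation frozen as $M[\cdot]$, the other free to rotate) is exactly why Step 3 must play the two linearizations against each other rather than handle $\pi$ and $\sigma$ at once; one must also take a little care that every appeal to ``depends only on relative order'' is legitimate when switching which permutation carries the large letters.
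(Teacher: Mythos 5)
The paper does not actually prove this lemma---it is imported from Domagalski et al.\ \cite{Detal:csc}---so your argument must stand on its own, and it has a genuine gap. Steps 1 and 2 are sound: pointing at the overall maximum does give the bijection $[\pi]\shuffle[\sigma]\cong\bigsqcup_{\bar\pi\in[\pi]}(\bar\pi\shuffle M[\sigma])$ when that maximum lies in $\sigma$, and combining (a), (b), and the shuffle-compatibility of $\st$ correctly shows that, \emph{for representatives with the maximum in $\sigma$}, the distribution is a function of $\cst[\pi]$, $\st(M[\sigma])$, $m$, $n$ (and symmetrically when the maximum lies in $\pi$). The gap is your opening assertion that ``the distribution $\cst([\pi]\shuffle[\sigma])$ does not depend on this choice'' of which block carries the largest letter. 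For a general cyclic statistic this is false: the multiset of relative orders occurring in $[\pi]\shuffle[\sigma]$ depends on the combined relative order of the two alphabets, not just on the patterns of $\pi$ and $\sigma$ separately. Concretely, $[123]\shuffle[4]=\{[1234],[1243],[1423]\}$ while $[124]\shuffle[3]=\{[1234],[1243],[1324]\}$, and $[1423]\neq[1324]$ as cyclic permutations, even though the two input pairs agree componentwise after standardization. So invariance under moving the maximum from one block to the other is not a consequence of ``$\cst$ depends only on relative order''; it is precisely the nontrivial content that the hypotheses must supply.

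This unproved invariance is exactly what Step 3 leans on. Your chain $\cst([\pi]\shuffle[\sigma])=\cst([\pi]\shuffle[\sigma'])=\cst([\pi']\shuffle[\sigma'])$ evaluates the middle term twice with incompatible representatives: once with the maximum in $\pi$ (to use the $D_2$-form) and once with the maximum in $\sigma'$ (to use the $D_1$-form), and nothing in the argument shows these two evaluations agree. What you have actually proved is that the distribution is well defined within each of the two families ``maximum in the first block'' and ``maximum in the second block'' separately. Hypothesis (a) only says $\st\circ M$ \emph{refines} $\cst$, so $\cst[\sigma]=\cst[\sigma']$ does not yield $\st(M[\sigma])=\st(M[\sigma'])$, and you cannot stay inside a single family. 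To close the gap you must show that $D_1(\cst[\pi],\st(M[\sigma]),m,n)$ depends on $\st(M[\sigma])$ only through $\cst[\sigma]$, or otherwise reconcile the two linearizations; the difficulty already appears when $n=1$, where one must show that inserting a new overall maximum into $[\pi]$ and inserting a new non-maximal letter produce the same $\cst$-distribution. Your closing remark shows you sensed this issue, but sensing it is not the same as resolving it, and the resolution is the heart of the lemma.
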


We would like to understand how the lifting lemma fits into our algebraic framework. In particular, we have tried to prove the lifting lemma from Theorem \ref{t-AtoAcyc}, but our attempts have been unsuccessful because it is unclear to us how the conditions in the lifting lemma relate to the linear independence condition of that theorem.

\begin{question} Can the lifting lemma be proven from Theorem \ref{t-AtoAcyc}?
\end{question}

Finally, every statistic which is known to be cyclic shuffle-compatible is a cyclic descent statistic, so it is natural to ask whether any cyclic shuffle-compatible statistics are not cyclic descent statistics. In the linear setting, Gessel and Zhuang \cite{Gessel2018} had conjectured that every shuffle-compatible statistic is a descent statistic, but a counterexample was found by Kantarc{\i} O\u{g}uz \cite{KantarciOguz2018}. So, we will pose this as a question rather than as a conjecture. 

\begin{question} Is every cyclic shuffle-compatible statistic a cyclic descent statistic?
\end{question}

We note that the cyclic statistic induced by Kantarc{\i} O\u{g}uz's counterexample is not cyclic shuffle-compatible.

\vspace{15bp}

\noindent \textbf{Acknowledgements.} We thank an anonymous referee for carefully reading our paper and providing several corrections.

\bibliographystyle{alpha}

\bibliography{csfref}

\end{document}